\def\dfrac{\displaystyle\frac}
\def\dsum{\displaystyle\sum}
\def\dlim{\displaystyle\lim}
\newtheorem{prop}{Proposition}
\newtheorem{theo}[prop]{Theorem}
\newtheorem{lemm}[prop]{Lemma}
\newtheorem{coro}[prop]{Corollary}
\newtheorem{rema}[prop]{Remark}
\newtheorem{defi}[prop]{Definition}
\newcommand{\ra}{\rightarrow}
\newcommand{\pa}{\partial}
\newcommand{\al}{\alpha}
\newcommand{\abc}[1]{\left( #1 \right)}%()随内容变大%
\newcommand{\abz}[1]{\left[ #1 \right]}%[]随内容变大%
\renewcommand{\leq}{\leqslant}
\renewcommand{\geq}{\geqslant}
\newcommand{\p}{\partial}
\newcommand{\la}{\lambda}
\numberwithin{equation}{section}
\title{The curvature estimates  for convex solutions of some fully nonlinear Hessian type equations }
\begin{document}

\author{Chunhe Li}
\address{School of Mathematical Sciences  \\ University of Electronic Science and Technology of China \\ Chengdu, China} \email{chli@fudan.edu.cn}
\author{Changyu Ren}
\address{School of Mathematical Science\\
Jilin University\\ Changchun\\ China}
\email{rency@jlu.edu.cn}
\author{Zhizhang Wang}
\address{School of Mathematical Science\\ Fudan University \\ Shanghai, China}
\email{zzwang@fudan.edu.cn}
\thanks{Research of the  first author is supported by by  FRFCU  Grant No. ZYGX2016J135 and last author is partially supported  by NSFC Grant No.11301087 and No. 11671069.}
\begin{abstract}
The curvature estimates of quotient curvature equation do not always exist even for convex setting \cite{GRW}. Thus it is natural question to find other type of elliptic equations possessing curvature estimates. In this paper, we  discuss  the existence of curvature estimate for fully nonlinear elliptic equations defined by symmetric polynomials, mainlly, the linear combination of elementary symmetric polynomials.

\end{abstract}
\maketitle

%\maketitle

\section{introduction}
The existence of curvature estimates or  $C^2$ estimates  for fully nonlinear elliptic partial differential equations is one of the central  topics in this field. One of the most popular fully nonlinear equation is the $k$-Hessian equations. Let's give some setting and a short review. Suppose $M$ is some $n$ dimensional compact hypersurface in Euclidean spaces $\mathbb{R}^{n+1}$.
We let
$\nu(X), \kappa(X)$ are the outer-normal and principal
curvatures of hypersurface $M\subset R^{n+1}$ at position vector $X$
respectively. The prescribed $k$-Hessian curvature equations are
\begin{eqnarray}\label{hessian}
\sigma_k(\kappa(X))=\psi(X,\nu),
\end{eqnarray}
for $1\leq k\leq n$. When $k=1$, curvature estimate comes from the theory of quasilinear PDE.   If $k=n$, curvature estimate in this case for general $\psi(X, \nu)$ is due to Caffarelli-Nirenberg-Spruck \cite{CNS1}. For $f$ independent of the normal vector, the $C^2$-estimate
was proved by Caffarelli-Nirenberg-Spruck \cite{CNS5, CNSV}  for a general class of fully nonlin-
ear operators, including Hessian type and quotient Hessian type. Ivochkina \cite{I1, I} considered the Dirichlet problem of the above equation on domains in $\mathbb R^n$, $C^2$ estimate was proved there under some extra conditions on the dependence of $\psi$ on $\nu$. In \cite{CW}, $C^2$ estimates for Hessian equations have been studied deeply by Chou-Wang. $C^2$ estimate was also proved for equation of prescribing curvature measures problem in \cite{GLM, GLL}.
 If the function $\psi$ is  convex respect to the normal $\nu$, it is well
known  that the global $C^2$ estimate has been obtained by B. Guan
\cite{B}. Recently, Guan-Ren-Wang \cite{GRW}  obtained global $C^2$ estimates for a closed convex hypersurface  and  partially solved the long standing problem. In the same paper \cite{GRW}, they also proved the estimate for  starshaped $2$-convex hypersurfaces. In \cite{LRW}, Li-Ren-Wang relaxed the convex to $k+1$- convex for any $k$ Hessian equations. In \cite{RW}, Ren-Wang totally solved the case  $k=n-1$, that is the global curvature estimates of $n-1$ convex solutions for $n-1$ Hessian equations. In \cite{CLW}, Chen-Li-Wang  extended these estimates to non Codazzi case in warped product space. In \cite{SX}, Spruck-Xiao extended $2$-convex case in \cite{GRW}  to space forms and give a simple proof for the Euclidean case. We also note the recently important work on the curvature estimates and $C^2$ estimates developed by Guan \cite{B2} and Guan-Spruck-Xiao \cite{GSX}.

The  Hessian equations have a lot of applications in the literature. The  famous  Minkowski problem, namely, prescribed Gauss-Kronecker curvature on the outer normal, has been widely discussed in \cite{N, P1, P3, CY}. Alexandrov also posed the problem of prescribing general Weingarten curvature on outer normals, seeing  \cite{A2, gg}. The prescribing curvature measures problem in convex geometry also has been extensively studied in \cite{A1, P1, GLM, GLL}. In \cite{BK, TW, CNS5}, the prescribing mean curvature problem and Weingarten curvature problem also have been considered and obtained fruitful results.

 The estimates of  Hessian equations with generalized right hand side appear some new geometric applications recently. In \cite{PPZ1}, \cite{PPZ2}, Phong-Picard-Zhang generalized the Fu-Yau equations, which is a complex $2$-Hessian equations depending on gradient term
 on the right hand side. The \cite{PPZ2}, \cite{PPZ3} obtained their $C^2$ estimates using the idea of \cite{GRW}.  In \cite{GL}, Guan-Lu
 consider the curvature estimate for hypersurfaces in high dimensional Riemannian manifolds, which is also a $2$-Hessian equation depending on
 normal.  The estimates in \cite{GRW} is also applied in \cite{X} and \cite{BIS}.

For general type of fully nonlinear equations, curvature estimates
does not always exist. The  quotient type of Hessian equations is defined by 
\begin{eqnarray}\label{ex}
\frac{\sigma_k(\kappa(X))}{\sigma_{l}(\kappa(X))}=f(X, \nu),
\end{eqnarray}
for any positive integers $k,l$. The last section in \cite{GRW}  gives some counterexamples which implies curvature estimates do not hold for these quotient equations even for convex setting.

It is natural to ask that except Hessian equations, which type of elliptic equations can always possess curvature estimates for general right hand side?  It means that we need to consider the following general curvature equations:
\begin{eqnarray}\label{GeralQ}
Q(\kappa(X))=\psi(X,\nu(X)),
\end{eqnarray}
where $Q$ is a symmetric function with respect to $\kappa_1\cdots,\kappa_n$. The above example requires that function $Q$ should exclude the quotient type. Therefore, $Q$ needs some restrictions. In view of proof in \cite{GRW}, the quotient concavity has been extensively used which implies that  "order" is enssential for our function.  Thus, a suitable choice for $Q$ may be the symmetric polynomials. On the other hand, we mainly discuss the convex solutions which means that $Q$ should satisfy elliptic conditions in convex cone,
$$Q>0, \text{ and } Q^{ii}=\frac{\p Q}{\p \kappa_i}>0.$$  Hence, as the first step, the simplest choice for $Q$ to satisfy the above requirements may be the linear combinational  $k$ hessian operators, namely
\begin{eqnarray}\label{QSS}
Q(\kappa)=\sum_{s=1}^k\alpha_s\sigma_s(\kappa),
\end{eqnarray}
where  $\alpha_m$
are non negative constants and $\alpha_k>0$, $k\leq n$.  We also need some quotient concavity assumption which we call Condition (Q) corresponding  $k$ Hessian cases.\\

\noindent {\bf Condition (Q)}: There are $k-1$ polynomials $S_1,\cdots,S_{k-1}$ defined by
$$S_l=\sigma_l+\sum_{s=0}^{l-1}\beta_s^l\sigma_s,$$ where constants $\beta_s^l\geq 0$ such that
$(Q/S_{l})^{1/(k-l)}$ are concave functions for $1\leq l\leq k-1$.  \\

Using Condition (Q), we  obtain the curvature estimates of convex solutions for equation defined by \eqref{QSS} in section 2.
\begin{theo}\label{theo2}
 Suppose $M\subset \mathbb R^{n+1}$ is a closed
convex hypersurface satisfying curvature equation
\begin{eqnarray}\label{EQ}
Q(\kappa(X))=\sum_{s=0}^k\alpha_s\sigma_s(\kappa(X))=\psi(X,\nu(X))
\end{eqnarray}
for some positive function $\psi(X, \nu)\in
C^{2}(\Gamma)$ and nonnegative coefficients $\alpha_0,\cdots,\alpha_k$, where $\Gamma$ is an open neighborhood of unit
normal bundle of $M$ in $\mathbb R^{n+1} \times \mathbb S^n$. Further assume that the polynomial $Q$ satisfies Condition (Q), then
there is a constant $C$ depending only on $n, k$, $\|M\|_{C^1}$,
$\inf \psi $ and $\|\psi\|_{C^2}$, such that
 \begin{equation}\label{5.1}
 \max_{X\in M, i=1,\cdots, n} \kappa_i(X) \le C.\end{equation}
 \end{theo}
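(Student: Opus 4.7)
The plan is to adapt the maximum-principle strategy of Guan--Ren--Wang \cite{GRW}, with Condition~(Q) playing the role of the classical quotient concavity of $(\sigma_k/\sigma_l)^{1/(k-l)}$ used there. After translating $M$ so that the origin lies inside, the support function $u=\langle X,\nu\rangle$ is positive on $M$. I would work with the auxiliary function
\[
  W \;=\; \log \lambda_1 \;+\; \alpha \log u \;+\; \tfrac{\beta}{2}|X|^2,
\]
where $\lambda_1$ is the largest principal curvature and $\alpha,\beta>0$ are small constants to be fixed. Since $M$ is closed, $W$ attains its maximum at some $X_0\in M$; near $X_0$ use the standard symmetric perturbation of $\lambda_1$ to handle possible multiplicity, and choose an orthonormal frame in which $(h_{ij})$ is diagonal with $h_{11}=\lambda_1$.

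The first- and second-order conditions $\nabla W(X_0)=0$ and $\mathcal{L}W(X_0)\le 0$, where $\mathcal{L}=Q^{ii}\nabla_i\nabla_i$ denotes the linearized operator, together with the Codazzi equation, the Simons identity, and two differentiations of \eqref{EQ}, reduce to an inequality whose dangerous ingredients are the third-order terms
\[
  I_1 \;=\; \frac{1}{\lambda_1}\sum_i Q^{ii}(h_{11i})^2,
  \qquad
  I_2 \;=\; \sum_{i\ne 1}\frac{2\,(Q^{11}-Q^{ii})}{\lambda_1-\lambda_i}\,(h_{1i1})^2,
\]
balanced against the positive contributions $\beta\sum_i Q^{ii}\lambda_i^2$ and $\beta\sum_i Q^{ii}$, together with lower-order terms produced by the support function. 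The objective is to absorb $I_1$ and $I_2$ into these positive terms; the term $I_2$ is already nonpositive in the convex setting, so the real task is to bound $I_1$.

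The crucial step is to bring Condition~(Q) to bear on $I_1$. Following \cite{GRW}, partition the indices into the ``bad'' set $B=\{i:\lambda_i\ge \delta\lambda_1\}$ and its complement $G$, and choose $l\in\{1,\dots,k-1\}$ adapted to $|B|$. The concavity of $(Q/S_l)^{1/(k-l)}$ then implies log-concavity of $Q/S_l$, which after differentiation and rearrangement yields the inequality
\[
  \frac{Q^{pp,qq}h_{pp1}h_{qq1}}{Q}-\frac{(Q^{pp}h_{pp1})^2}{Q^2}
  \;\le\;
  \frac{S_l^{pp,qq}h_{pp1}h_{qq1}}{S_l}-\frac{(S_l^{pp}h_{pp1})^2}{S_l^2},
\]
converting the second derivatives of $Q$ into those of $S_l=\sigma_l+\sum_{s<l}\beta_s^l\sigma_s$. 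Since $\beta_s^l\ge 0$, the Newton--Maclaurin machinery for $\sigma_l$ transfers to $S_l$: for $i\in B$ one gets lower bounds on $S_l^{ii}\lambda_i/S_l$ that let the corresponding parts of $I_1$ be dominated by $\beta\sum Q^{ii}\lambda_i^2$, while for $i\in G$ one uses $\lambda_i\le \delta\lambda_1$ together with the first-order relation $h_{11i}=-\lambda_1(\alpha u_i/u+\beta X_i)$ to absorb the remaining pieces into $\beta\sum Q^{ii}$ at the cost of enlarging $C$.

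The main obstacle, as always in this circle of problems, is the simultaneous calibration of $\delta,\alpha,\beta$, and the choice of $l$ as a function of the eigenvalue clustering pattern. This is precisely what Condition~(Q) is designed to make possible: the freedom to augment $\sigma_l$ by the nonnegative combination $\sum_{s<l}\beta_s^l\sigma_s$ keeps $S_l$ strictly positive on the convex cone even when only a few eigenvalues are large, preserving the concavity of $(Q/S_l)^{1/(k-l)}$ in every case where we need to apply it. Once the inequality at $X_0$ forces $\lambda_1(X_0)\le C$, the global bound \eqref{5.1} follows by evaluating $W$ at $X_0$ and at an arbitrary point and using the $C^1$ bound on $M$.
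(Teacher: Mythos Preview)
Your outline follows the Guan--Ren--Wang template and correctly identifies Condition~(Q) as the replacement for the classical $(\sigma_k/\sigma_l)^{1/(k-l)}$ concavity. But there is a genuine missing ingredient that the paper needs and your proposal does not supply.

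The paper does \emph{not} use $\log\lambda_1+\alpha\log u+\tfrac{\beta}{2}|X|^2$; it works with the $P_m$-functional $\varphi=\log P_m-mZ\log u$, $P_m=\sum_j\kappa_j^m$, as in \cite{LRW}. The third-order terms are then organized into the five blocks $A_i,B_i,C_i,D_i,E_i$, and the proof shows $A_i+B_i+C_i+D_i-E_i\ge 0$ separately for $i\ne 1$ (Lemma~\ref{lemma8}) and for $i=1$ (Lemma~\ref{lemma2}). For $i=1$ the argument uses $S_\mu$ from Condition~(Q), where $\mu$ is the number of eigenvalues comparable to $\kappa_1$. The key point you miss is that Lemma~\ref{lemma2} carries the explicit hypothesis
\[
\alpha_1=\alpha_2=\cdots=\alpha_\mu=0,
\]
i.e.\ all lower-order coefficients up to the cluster size must vanish. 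The concavity of $(Q/S_\mu)^{1/(k-\mu)}$ alone does not close the estimate when some $\alpha_s$ with $s\le\mu$ is present: in the final absorption step one needs $\kappa_1 Q^{jj}\gtrsim Q/\delta'$ for $j>\mu$, and the contribution of $\alpha_s\sigma_s^{jj}$ with $s\le\mu$ spoils this lower bound.

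The paper resolves this by a dichotomy that your sketch omits entirely. If the eigenvalue gap occurs at index $r$ (so $\kappa_r\ge\delta_r\kappa_1$ and $\kappa_{r+1}\le\delta_{r+1}\kappa_1$) and \emph{some} $\alpha_s\ne 0$ with $s\le r$, then one bypasses the third-order analysis altogether: from the equation,
\[
\alpha_s\,\sigma_s(\kappa)\;\le\;Q(\kappa)\;=\;\psi(X,\nu),
\]
and since $\sigma_s(\kappa)\ge\kappa_1\cdots\kappa_s\ge(\delta_r\kappa_1)^s$, this already bounds $\kappa_1$. Only when $\alpha_1=\cdots=\alpha_r=0$ does one invoke the $S_r$-concavity argument. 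Your paragraph on ``calibration of $\delta,\alpha,\beta$ and the choice of $l$'' does not account for this; Condition~(Q) gives you the family $S_1,\dots,S_{k-1}$, but it does not by itself neutralize the lower-order $\alpha_s\sigma_s$ pieces of $Q$. Without this two-case split the absorption you describe for $i\in G$ will fail in general.
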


The type of linear combinational  $k$ Hessian operators also has been studied and been applied in geometry.  Harvey-Lawson \cite{HL80} have considered the special Lagrangian equations which is one of this type.  Krylov \cite{Kry} also considered such type of equations and
obtained some concavity for the opposite sign of the coefficients. In \cite{GZ}, Guan-Zhang  studyed the curvature estimates for such type of equations with the right hand side not depending on gradient term but with coefficients do depending on the position of the hypersurfaces. The geometry problems in hyperbolic space also raise these type of equations naturally  \cite{EGM}.

According to Theorem \ref{theo2}, the curvature estimates become to find suitable quotient concavity: Condition (Q). At first, let's consider the simplest case in which only two terms in \eqref{EQ} appear,
\begin{align}\label{1.g1}
Q_S^k(\kappa(X)):=\al\sigma_{k-1}(\kappa(X))+\sigma_{k}(\kappa(X))=f(X,\nu(X)),
\quad \forall X\in M,
\end{align}
where  $1\leq k\leq n$ and $\alpha$ is nonnegative. We will call this type of Hessian equations: The Sum Type Equations.
In section 3, we will study quotient concavity of sum type equations. We will prove that
 \begin{theo}\label{theosum}
The sum type equations satisfy the Condition (Q) in their admissible sets cone.
\end{theo}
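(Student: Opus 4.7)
The plan is to exploit a \emph{phantom variable} identity that recasts $Q_S^k$ as a genuine elementary symmetric polynomial in one extra variable. Setting $\tilde\kappa:=(\kappa_1,\ldots,\kappa_n,\alpha)\in\mathbb{R}^{n+1}$ and splitting $j$-element subsets of $\{1,\ldots,n{+}1\}$ according to whether they contain the last index, one obtains
\[
\sigma_j(\tilde\kappa)=\sigma_j(\kappa)+\alpha\,\sigma_{j-1}(\kappa),\qquad 0\leq j\leq k.
\]
In particular $Q_S^k(\kappa)=\sigma_k(\tilde\kappa)$. This dictates the choice, for each $1\leq l\leq k-1$, of
\[
S_l(\kappa):=\sigma_l(\kappa)+\alpha\,\sigma_{l-1}(\kappa)=\sigma_l(\tilde\kappa),
\]
which is precisely of the form required by Condition~(Q) with $\beta^l_{l-1}=\alpha$ and $\beta^l_s=0$ for $s<l-1$.

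With this choice, the function in question becomes
\[
\left(\frac{Q_S^k(\kappa)}{S_l(\kappa)}\right)^{1/(k-l)}=\left(\frac{\sigma_k(\tilde\kappa)}{\sigma_l(\tilde\kappa)}\right)^{1/(k-l)},
\]
and the right-hand side is concave in $\tilde\kappa$ on the cone $\Gamma_k(\mathbb{R}^{n+1})$ by the classical Newton--Maclaurin quotient concavity (Lin--Trudinger). The lift $\kappa\mapsto\tilde\kappa=(\kappa,\alpha)$ is affine, and affine precomposition preserves concavity, so the desired concavity of $(Q_S^k/S_l)^{1/(k-l)}$ in $\kappa$ follows immediately --- provided $(\kappa,\alpha)$ remains in $\Gamma_k(\mathbb{R}^{n+1})$ whenever $\kappa$ lies in the admissible cone of $Q_S^k$.

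This cone compatibility is the remaining step. The admissible cone of $Q_S^k$, taken as the connected component of ellipticity containing the positive orthant, is naturally identified with the slice $\{\tilde\kappa_{n+1}=\alpha\}\cap\Gamma_k(\mathbb{R}^{n+1})$: on this slice one has $\sigma_k(\tilde\kappa)=Q_S^k>0$ and $\sigma_{k-1}(\tilde\kappa|_i)=\partial_i Q_S^k>0$ for every $i\leq n$, and the standard Newton--Maclaurin inequalities, combined with $\alpha\geq 0$, then propagate positivity downward to $\sigma_1(\tilde\kappa),\ldots,\sigma_{k-1}(\tilde\kappa)$. I expect precisely this cone identification to be the main technical point; once it is in place, the theorem reduces to an invocation of the classical Lin--Trudinger concavity together with restriction to an affine subspace.
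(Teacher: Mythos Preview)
Your proof is correct and considerably more direct than the paper's. The phantom-variable identity $Q_S^j(\kappa)=\sigma_j(\kappa,\alpha)$ reduces the sum-type operator to an honest elementary symmetric polynomial in $\mathbb{R}^{n+1}$, after which the classical concavity of $(\sigma_k/\sigma_l)^{1/(k-l)}$ on $\Gamma_k$ does all the work via affine restriction. The cone compatibility you flag as the main technical point is in fact immediate from the paper's definition $\tilde\Gamma_k=\Gamma_{k-1}\cap\{Q_S^k>0\}$: for $\kappa\in\tilde\Gamma_k$ and $\alpha\geq 0$ one has $\sigma_j(\kappa,\alpha)=\sigma_j(\kappa)+\alpha\sigma_{j-1}(\kappa)>0$ for every $1\leq j\leq k$, hence $(\kappa,\alpha)\in\Gamma_k(\mathbb{R}^{n+1})$. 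You only need this one-sided inclusion, not the full identification of the admissible cone with the slice.

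The paper argues entirely differently. Section~3 never introduces the extra variable; instead Lemma~\ref{lem14} proves by hand, via an induction modelled on the Huisken--Sinestrari and Marcus--Lopes computations \cite{HS,ML}, that each consecutive quotient $q_j=Q_S^{j+1}/Q_S^j$ is concave on $\Gamma_j$, with the auxiliary concavity of $\sigma_k/Q_S^k$ (Lemma~\ref{lem13}) feeding the induction step. Corollary~\ref{coro15} then obtains $(Q_S^k/Q_S^l)^{1/(k-l)}$ as a geometric mean of these positive concave factors. Your route bypasses this entire induction by recognising that $Q_S^k$ \emph{is} a disguised $\sigma_k$, so the classical result applies verbatim. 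The paper's longer computation perhaps serves as a warm-up for the general linear combinations in Section~4, where no single phantom variable suffices; but for the sum type specifically, your argument is shorter and more transparent.
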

Here the concept of admissible set will be defined in section 3 and it includes convex cone.

For the general case, we need a general condition to recover Condition (Q). Therefore, in section 4, we introduce the following: Condition (C).\\

\noindent {\bf Condition (C)}:  There is some  $b\in \mathbb{R}^N$ and $N\geq k$ such that for $m=0,1,\cdots, k,$ we have
\begin{eqnarray}\label{C}
\alpha_m'=\frac{(n-k)!\alpha_{k-m}}{(n-k+m)!}=\sigma_m(b).\nonumber
\end{eqnarray}
 In other words, the following real coefficient polynomial only has real roots
$$\sum_{m=0}^N\alpha_m't^m=0.$$ \\

This is something like hyperbolic polynomials \cite{Gar, HL13}. Using Condition (C), in section 4, we will prove that
\begin{theo}\label{theos4}
 If the coefficients of the order $k$ polynomial $Q$ defined by \eqref{QSS}
 satisfies Condition (C), then $Q$ should satisfy Condition (Q) in $\Gamma_k$ cone.
\end{theo}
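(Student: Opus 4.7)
The plan is to realize $Q$ and the candidate denominators $S_l$ as normalized elementary symmetric polynomials in the eigenvalues of a single G{\aa}rding hyperbolic auxiliary polynomial, then invoke the hyperbolic Newton--Maclaurin quotient concavity. Condition~(C) supplies nonnegative reals $b_1,\ldots,b_N$ with $\sigma_m(b)=\alpha_m'$ for $m\leq k$ (nonnegativity of $b_j$ follows from $\alpha_m'\geq 0$ for every $m$). Letting $F(t;\kappa):=\prod_{i=1}^n(\kappa_i+t)$ and $L_b:=\prod_{j=1}^N(1+b_j\partial_t)=\sum_m\sigma_m(b)\partial_t^m$, a direct expansion gives
\begin{equation*}
L_bF(t;\kappa)=\sum_{j=0}^n t^{n-j}Q_j(\kappa),\qquad Q_j(\kappa):=\sum_{s=0}^j\frac{(n-s)!}{(n-j)!}\sigma_{j-s}(b)\sigma_s(\kappa),
\end{equation*}
so $Q=Q_k$. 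Taking $S_l:=Q_l$ for $1\leq l\leq k-1$ produces $S_l=\sigma_l+\sum_{s<l}\beta_s^l\sigma_s$ with $\beta_s^l=\frac{(n-s)!}{(n-l)!}\sigma_{l-s}(b)\geq 0$, matching the form required by Condition~(Q).

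To access hyperbolicity, homogenize by adding a variable $u$ and setting $P(t,\kappa,u):=L_{ub}F(t;\kappa)$; since each $\partial_t^m$ in $L_{ub}$ is paired with $u^m$, $P$ is homogeneous of total degree $n$ in $(t,\kappa,u)$. The Hermite--Poulain theorem says each factor $1+ub_j\partial_t$ preserves real-rootedness of univariate polynomials in $t$; iterated application shows $P(\,\cdot\,;\kappa,u)$ is real-rooted in $t$ for every real $(\kappa,u)$, so $P$ is G{\aa}rding hyperbolic in the $t$-direction. By G{\aa}rding's derivative theorem, $\tilde P:=\partial_t^{n-k}P$ is hyperbolic of degree $k$ in $t$ with hyperbolicity cone at least as large as that of $P$. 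Factoring $\tilde P(t;\kappa,u)=\frac{n!}{k!}\prod_{i=1}^k(t+\mu_i(\kappa,u))$ with real $\mu_i$ and matching the coefficient of $t^{k-l}$ yields $\sigma_l(\mu(\kappa,u))=\frac{k!(n-l)!}{n!(k-l)!}Q_l(\kappa,u)$, hence
\begin{equation*}
\bigl(Q_k(\kappa)/Q_l(\kappa)\bigr)^{1/(k-l)}=\binom{n-l}{k-l}^{1/(k-l)}\bigl(\sigma_k(\mu(\kappa,1))/\sigma_l(\mu(\kappa,1))\bigr)^{1/(k-l)}.
\end{equation*}

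The Newton--Maclaurin inequality for G{\aa}rding hyperbolic polynomials, applied to $\tilde P$, gives that $(\sigma_k(\mu)/\sigma_l(\mu))^{1/(k-l)}$ is concave on the $k$-th G{\aa}rding cone $\Gamma_k^{\tilde P}:=\{(\kappa,u):\sigma_j(\mu(\kappa,u))>0\text{ for all }j\leq k\}$. Since $\kappa\in\Gamma_k$ makes $Q_j(\kappa)>0$ for all $j\leq k$, the slice $\Gamma_k\times\{u=1\}$ lies inside $\Gamma_k^{\tilde P}$; restricting the concave function to this affine slice delivers the concavity of $(Q/S_l)^{1/(k-l)}$ in $\kappa$ on $\Gamma_k$, which is Condition~(Q). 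The main obstacle lies in the middle step---establishing joint G{\aa}rding hyperbolicity of $P$ via Hermite--Poulain and then invoking the correct hyperbolic Newton--Maclaurin quotient concavity (rather than the elementary pointwise inequality for $\sigma_k/\sigma_l$)---which is precisely how the algebraic Condition~(C) gets converted into the analytic concavity statement of Condition~(Q).
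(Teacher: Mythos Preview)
Your approach is structurally parallel to the paper's but packaged through the G{\aa}rding formalism. In particular, your $S_l=Q_l$ coincide exactly with the paper's $Q_l^N$ (check: $Q^N_l(x)=\sum_s\frac{(n-s)!}{(n-l)!}\sigma_{l-s}(b)\sigma_s(x)$), and both arguments rest on the same hyperbolicity in the $\theta$-direction, established via iterated Rolle (your Hermite--Poulain step is the paper's Lemma~\ref{lem}). Two points deserve attention.

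First, a minor slip: the assertion that $b_j\geq 0$ follows from $\alpha_m'\geq 0$ is false (take $N=3$, $k=2$, $b=(2,2,-1)$: then $\sigma_1(b)=3$, $\sigma_2(b)=0$, but $b_3<0$). Fortunately you never use it: Hermite--Poulain works for arbitrary real $b_j$, and the nonnegativity of $\beta_s^l$ needs only $\sigma_{l-s}(b)=\alpha_{l-s}'\geq 0$, which holds directly.

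Second, and more substantively: the step you call ``hyperbolic Newton--Maclaurin quotient concavity'' is the entire content of the theorem, not a black box. What you need is that for a degree-$k$ polynomial $\tilde P$ hyperbolic in direction $e$, the function $(\tilde P/D_e^{k-l}\tilde P)^{1/(k-l)}$ is concave on the G{\aa}rding cone; by the geometric-mean trick this reduces to concavity of $q/D_e q$ on $\Gamma(q,e)$ for each hyperbolic derivative $q$. The pointwise Newton--Maclaurin inequality does transfer to hyperbolic polynomials (it is a statement about a single real-rooted univariate polynomial), but pointwise Newton--Maclaurin does \emph{not} by itself give concavity of the quotient as a function of $v$---that requires controlling how the eigenvalues $\mu_i(v)$ vary. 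This is precisely what the paper's Lemma~\ref{lem20} does by hand: it tracks the roots $\lambda_m(x,a)$ and the derivatives $\frac{d\lambda_l}{ds}$ along an arbitrary line, establishes the sign relations $\mu_m(t+\lambda_m)>0$ and $\mu_l\frac{d\lambda_l}{ds}>0$, and then closes with a Cauchy--Schwarz computation. Unless you supply a precise reference for the general hyperbolic quotient concavity (it is obtainable, but not a standard named theorem in the sources the paper cites), your argument has relocated rather than discharged the main difficulty.
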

Here $\Gamma_k$ cone is the Garding's cone. See section 3 for more detail. Using Theorem \ref{theo2} and Theorem \ref{theos4},
we have the following main result for convex solutions in the present paper.
\begin{theo}\label{main}
 Suppose $M\subset \mathbb R^{n+1}$ is a closed
convex hypersurface satisfying curvature equation \eqref{EQ}
for some positive function $\psi(X, \nu)\in
C^{2}(\Gamma)$ and nonnegative coefficients $\alpha_0,\cdots,\alpha_k$ satisfying Condition (C), where $\Gamma$ is an open neighborhood of unit
normal bundle of $M$ in $\mathbb R^{n+1} \times \mathbb S^n$, then
there is a constant $C$ depending only on $n, k$, $\|M\|_{C^1}$,
$\inf \psi $ and $\|\psi\|_{C^2}$, such that
 \begin{equation}\label{5.1}
 \max_{X\in M, i=1,\cdots, n} \kappa_i(X) \le C.\end{equation}
 \end{theo}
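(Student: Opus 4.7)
The plan is to obtain Theorem \ref{main} as a direct synthesis of Theorem \ref{theo2} and Theorem \ref{theos4}. The idea is that Condition (C) on the coefficients is, by Theorem \ref{theos4}, enough to guarantee the quotient concavity Condition (Q) on $\Gamma_k$, and Condition (Q) is, by Theorem \ref{theo2}, enough to force the desired curvature bound. So the task reduces to checking that the hypotheses of the two previous theorems chain together cleanly.

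First I would record the admissibility observation: since $M$ is a closed convex hypersurface, at every point $X\in M$ the principal curvature vector $\kappa(X)=(\kappa_1(X),\ldots,\kappa_n(X))$ lies in the positive cone $\Gamma_n=\{\kappa:\kappa_i>0,\,\forall i\}\subset\Gamma_k$. In particular, $\kappa(X)$ is in the Garding cone $\Gamma_k$ throughout $M$, which is the domain on which Theorem \ref{theos4} produces Condition (Q).

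Next I would apply Theorem \ref{theos4} to the coefficients $\alpha_0,\ldots,\alpha_k$. By assumption they satisfy Condition (C), so Theorem \ref{theos4} supplies polynomials $S_1,\ldots,S_{k-1}$ of the form
\[
S_l=\sigma_l+\sum_{s=0}^{l-1}\beta_s^{l}\sigma_s,\qquad \beta_s^l\geq 0,
\]
such that each $(Q/S_l)^{1/(k-l)}$ is concave on $\Gamma_k$. This is exactly Condition (Q) for $Q$, and in view of the previous step it holds along the solution.

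Finally I would invoke Theorem \ref{theo2}. Its hypotheses are: $M$ is closed and convex, $\psi\in C^{2}(\Gamma)$ is positive, $\alpha_0,\ldots,\alpha_k$ are nonnegative with $\alpha_k>0$, and $Q$ satisfies Condition (Q); all of these are either given or have just been established. Theorem \ref{theo2} then delivers a constant $C$ depending only on $n$, $k$, $\|M\|_{C^1}$, $\inf\psi$ and $\|\psi\|_{C^{2}}$ with $\max_{X,i}\kappa_i(X)\leq C$, which is exactly \eqref{5.1}. There is no new hard step at this stage; the conceptual weight sits in Theorem \ref{theos4} (the algebraic verification that Condition (C) implies Condition (Q) in $\Gamma_k$) and in Theorem \ref{theo2} (the curvature maximum-principle argument), both of which are already available. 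The only potential pitfall to guard against is a domain mismatch, but the inclusion $\Gamma_n\subset\Gamma_k$ noted above closes that gap, so the proof is simply the composition of the two previous results.
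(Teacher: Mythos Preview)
Your proposal is correct and mirrors the paper's own approach exactly: the paper proves Theorem \ref{main} in one line by ``combining the discussion of section 2 and section 4,'' i.e., by feeding the output of Theorem \ref{theos4} (Condition (C) $\Rightarrow$ Condition (Q) on $\Gamma_k$) into Theorem \ref{theo2}. Your added remark that convexity places $\kappa(X)\in\Gamma_n\subset\Gamma_k$ makes the domain compatibility explicit, which the paper leaves implicit.
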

With appropriate barrier, we have the following existence result coming from the above main Theorem.
\begin{theo}\label{k-exist}
For equation \eqref{EQ}, assume that the coefficients of $Q$ satisfy Condition (C). Suppose $\psi\in C^2(\mathbb R^{n+1}\times \mathbb S^n)$ is a positive function and suppose there is a constant $r>1$ such that
\begin{eqnarray}\label{5.3}
\psi(X,\frac{X}{|X|})\leq \frac{Q(1,\cdots, 1)}{r^k}\ \ \text{ for }\ \   |X|=r ,
\end{eqnarray}
 and $\psi^{-1/k}(X,\nu)$ is a locally convex in $X \in B_r(0)$  for any fixed $\nu\in \mathbb S^n$, then equation (\ref{EQ}) has a strictly convex $C^{3,\alpha}$ solution inside $\bar B_r$.
\end{theo}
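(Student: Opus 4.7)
The plan is a continuity method carried out in the support-function parameterization of convex hypersurfaces. Write $h:\mathbb S^n\to\mathbb R$ for the support function of the convex body bounded by $M$; the inverse Gauss map is $X(\nu)=h(\nu)\nu+\nabla_{\mathbb S^n}h$, the principal radii of curvature are the eigenvalues $\lambda_1,\dots,\lambda_n$ of the matrix $W_{ij}:=h_{ij}+h\,\delta_{ij}$ on $\mathbb S^n$, and equation \eqref{EQ} becomes the fully nonlinear elliptic PDE
$$G(W,\nu):=Q\!\left(\tfrac{1}{\lambda_1},\dots,\tfrac{1}{\lambda_n}\right)-\psi\bigl(h(\nu)\nu+\nabla h,\nu\bigr)=0$$
for $h$ on $\mathbb S^n$, whose ellipticity is inherited from Condition (Q) in the convex cone.

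I would deform $\psi$ along $\psi_t:=(1-t)\,Q(1,\dots,1)+t\,\psi$ for $t\in[0,1]$. At $t=0$ the unit sphere is an explicit strictly convex solution, and along the path the barrier hypothesis \eqref{5.3} is preserved (each $\psi_t$ inherits the same upper bound with the same constant, using $r\geq 1$), as is the local convexity of $\psi_t^{-1/k}(\cdot,\nu)$ in $X$ (stable under convex combinations with positive constants). Set $I:=\{t\in[0,1]:\text{the equation with RHS }\psi_t\text{ has a strictly convex }C^{3,\alpha}\text{ solution}\}$. Openness of $I$ follows from the implicit function theorem on $C^{2,\alpha}(\mathbb S^n)$: at a strictly convex $h$ the linearization of $G$ is uniformly elliptic and, after quotienting out the finite-dimensional translational kernel built from degree-one spherical harmonics, invertible on the orthogonal complement.

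Closedness reduces to a priori estimates. The $C^0$ bound $|X|\le r$ comes from comparing $M$ with $\{|X|=r\}$: at a putative touching point from inside with $|X_0|=\rho>r$, convexity of $M$ forces $\kappa_i(X_0)\ge 1/\rho$ for every $i$, whence $Q(\kappa)\ge Q(1/\rho,\dots,1/\rho)=\sum_s\alpha_s\binom{n}{s}\rho^{-s}\ge Q(1,\dots,1)/\rho^k$ because $\rho\geq r\geq 1$ and $s\leq k$, contradicting $\psi\leq Q(1,\ldots,1)/r^k$. A positive lower bound on the inradius — hence a bound on $\|h\|_{C^1}$ — then comes from convexity together with the upper bound on $\psi$ via a standard volume estimate. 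Theorem \ref{main} supplies the upper bound $\kappa_i\le C$. The delicate remaining step is the matching bound $\lambda_i\le C$ (equivalently $\kappa_i\ge c_0>0$, i.e.\ strict convexity), which is exactly where the hypothesis that $\psi^{-1/k}(\cdot,\nu)$ is locally convex in $X$ enters: one rewrites the equation in the concave-in-$W$ form $Q^{1/k}(\lambda^{-1})=\psi^{1/k}$, applies the maximum principle on $\mathbb S^n$ to the largest eigenvalue of $W$ with an auxiliary test function built from $h$, and uses the convexity assumption to absorb the offending zeroth-order terms coming from differentiating $\psi$ in $X=h\nu+\nabla h$, in the style of the Caffarelli-Nirenberg-Spruck treatment of the Minkowski problem.

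Once $0<c_0\le\kappa_i\le C$ uniformly, Evans-Krylov yields a $C^{2,\alpha}$ estimate, Schauder bootstrapping upgrades to $C^{3,\alpha}$, and closedness is established; combined with openness and $0\in I$, this gives $I=[0,1]$ and hence $1\in I$. The principal obstacle, and the sole role of the convexity hypothesis beyond what Theorem \ref{main} already provides, is controlling principal radii of curvature from above uniformly along the continuity path: without it, the convex cone in which Condition (Q) operates could degenerate and the entire method of Theorem \ref{main} would fail to be applicable at the next $t$.
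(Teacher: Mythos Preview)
Your overall strategy is in the right spirit, but there are two concrete problems, and the paper's argument differs from yours in precisely those places.

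\textbf{The deformation does not preserve the structural hypothesis.} You take $\psi_t=(1-t)\,Q(1,\dots,1)+t\,\psi$ and assert that local convexity of $\psi_t^{-1/k}(\cdot,\nu)$ in $X$ is ``stable under convex combinations with positive constants.'' That would be true if you were taking a convex combination at the level of $\psi^{-1/k}$, but you are not: $\psi_t^{-1/k}=\bigl((1-t)c+t\psi\bigr)^{-1/k}$, and convexity of $\psi^{-1/k}$ does \emph{not} imply convexity of $(c+\psi)^{-1/k}$ in general (a one-variable check shows the required inequality $(\tfrac{1}{k}+1)(\psi')^2\ge (c+\psi)\psi''$ does not follow from $(\tfrac{1}{k}+1)(\psi')^2\ge \psi\,\psi''$ when $\psi''>0$). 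The paper avoids exactly this by interpolating on the $-1/k$ scale,
\[
(\psi^t)^{-1/k}=t\,\psi^{-1/k}+(1-t)\Bigl(C_n^k\bigl[\tfrac{1}{|X|^k}+\varepsilon(\tfrac{1}{|X|^k}-1)\bigr]\Bigr)^{-1/k},
\]
so that $(\psi^t)^{-1/k}$ is a genuine convex combination of convex functions of $X$ and hence convex for every $t$.

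\textbf{Strict convexity comes from the Constant Rank Theorem, not a CNS-type radii estimate.} You propose to bound the largest principal radius by a direct maximum-principle argument ``in the style of Caffarelli--Nirenberg--Spruck.'' The paper does not do this, and it is not clear such an argument goes through for a general $Q$ of the form \eqref{QSS}. Instead, the convexity hypothesis on $\psi^{-1/k}$ is exactly the structural condition needed in the Constant Rank Theorem of Guan--Lin--Ma \cite{GLM1}: any convex solution of \eqref{EQ} with such a right-hand side must have second fundamental form of constant (hence full) rank, which yields strict convexity along the whole deformation without any quantitative principal-radii estimate.

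Finally, the paper uses degree theory rather than the continuity method. This is not merely cosmetic: it removes the need to analyze the linearization and its possible kernel, so your discussion of invertibility modulo degree-one spherical harmonics is unnecessary (and, since $\psi$ depends on $X$, translations are not in general in the kernel anyway). With the corrected deformation, the Constant Rank Theorem for strict convexity, Theorem~\ref{main} for the upper curvature bound, the barrier \eqref{5.3} for the $C^0$ upper bound, and an Alexandrov--Fenchel volume lower bound (using $\alpha_m\sigma_m\le Q\le C$ for some $m\le k$), the a priori estimates close and a degree argument as in \cite{gg,GRW} finishes the proof.
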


These results also can be extended  to linear combinational Hessian equations defined in some domains in Euclidean space. At last, we give the following Lemmas, which will be needed in our proof.
\par
\begin{lemm} \label{lemm D}
Denote $Sym(n)$ the set of all $n\times n$ symmetric matrices. Let
$F=f(\kappa)$ be a $C^2$ symmetric function defined in some open
subset $\Psi \subset Sym(n)$. At any diagonal matrix $A\in \Psi$
with distinct eigenvalues, let $\ddot{F}(B,B)$ be the second
derivative of $C^2$ symmetric function $F$ in direction $B \in
Sym(n)$, then
\begin{align}
\label{1.9} \ddot{F}(B,B) =  \sum_{j,k=1}^n {\ddot{f}}^{jk}
B_{jj}B_{kk} + 2 \sum_{j < k} \frac{\dot{f}^j -
\dot{f}^k}{{\kappa}_j - {\kappa}_k} B_{jk}^2.
\end{align}
\end{lemm}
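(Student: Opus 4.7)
The plan is to reduce the statement to a second-order perturbation calculation for eigenvalues of symmetric matrices. Since $F(A)=f(\kappa(A))$ is defined as a symmetric function of eigenvalues, and the question is about the second directional derivative $\ddot F(B,B)=\frac{d^2}{dt^2}\big|_{t=0}F(A+tB)$, I would parametrize the perturbed eigenvalues $\kappa_i(t)$ of $A+tB$ and apply the chain rule.

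First I would choose coordinates so that $A=\mathrm{diag}(\kappa_1,\dots,\kappa_n)$ with the $\kappa_i$ pairwise distinct. Standard analytic perturbation theory for symmetric matrices (Rellich/Kato) then produces smooth functions $\kappa_i(t)$ with $\kappa_i(0)=\kappa_i$ and the well-known expansions
\begin{equation*}
\dot\kappa_i(0)=B_{ii},\qquad \ddot\kappa_i(0)=2\sum_{j\neq i}\frac{B_{ij}^2}{\kappa_i-\kappa_j}.
\end{equation*}
The first formula comes from differentiating the identity $(A+tB)v_i(t)=\kappa_i(t)v_i(t)$ once and pairing with $v_i(0)=e_i$; the second comes from differentiating again and expanding the first-order eigenvector correction $\dot v_i(0)=\sum_{j\neq i}\frac{B_{ij}}{\kappa_i-\kappa_j}e_j$, which is legitimate precisely because the $\kappa_i$ are distinct.

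Next I would apply the chain rule to $F(A+tB)=f(\kappa_1(t),\dots,\kappa_n(t))$:
\begin{equation*}
\frac{d^2}{dt^2}F(A+tB)\Big|_{t=0}=\sum_{j,k}\ddot f^{jk}\,\dot\kappa_j(0)\dot\kappa_k(0)+\sum_j\dot f^j\,\ddot\kappa_j(0).
\end{equation*}
Plugging in the eigenvalue expansions above and symmetrizing the double sum in the second term by pairing $(i,j)$ with $(j,i)$ gives
\begin{equation*}
\sum_i\dot f^i\cdot 2\sum_{j\neq i}\frac{B_{ij}^2}{\kappa_i-\kappa_j}=2\sum_{i<j}\frac{\dot f^i-\dot f^j}{\kappa_i-\kappa_j}B_{ij}^2,
\end{equation*}
which combined with the first-derivative contribution yields exactly \eqref{1.9}.

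The only real subtlety is justifying the eigenvalue/eigenvector expansions and making sure the off-diagonal sum symmetrization is executed carefully; everything else is chain rule. The distinctness hypothesis on the eigenvalues guarantees the denominators $\kappa_i-\kappa_j$ never vanish, so no regularization or passage to the limit is needed.
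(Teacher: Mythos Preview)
Your argument is correct and is precisely the standard perturbation-theoretic derivation of this formula. The paper does not give its own proof of this lemma but simply refers to \cite{Ball} and \cite{CNS3}; the computation you outline is essentially the one found in those references.
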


The proof of this lemma can be found in \cite{Ball} and \cite{CNS3}.

The paper is organized by 4 sections. In the section 2, we
generalize the curvature estimates of convex solutions for equation
\eqref{EQ} with Condition (Q). In section 3, we study the
admissible set and concavity of sum type equations. Section 4 mainly
studies the quotinet concavity of polynomial defined by \eqref{QSS}
with Condition (C).  The last section  stats some conclusions using 
Condition (C) and previous estimtaes for convex hypersurfaces. We also discuss the admissible
solutions for sum type equations.

\section{The curvature estimates}

In this section, let's consider the global curvature estimates for
linear combination of $k$ Hessian  equations \eqref{EQ}. We mainly
prove Theorem \ref{theo2} in this section.

Since the Lemma proved in \cite{LRW} need some more special property
of the  $\sigma_k$, we have to give some restriction on $Q$.  Thus,
we require $Q$ satisfying equation \eqref{EQ} where $\alpha_m$ are
non negative constants and $\alpha_k>0$ and  satisfying Condition
(Q) defined in the first section. Using the Condition (Q), we have
the following result similar to the Hessian equations. The detail of
the proof can be found in \cite{GLL} and \cite{GRW}.
\begin{lemm}\label{lGuan}
Assume that $k>l$, $W=(w_{ij})$ is a Codazzi tensor which is in
$\Gamma_k$. Denote $\beta=\dfrac{1}{k-l}$.  Then, for
$h=1,\cdots, n$, we have the following inequality
\begin{align}\label{1.4}
&-\dfrac{ Q^{pp,qq}}{ Q}(W)w_{pph}w_{qqh}+\dfrac{ S_l^{pp,qq}}{
S_l}(W)
w_{pph}w_{qqh}\\
\geq& \abc{\dfrac{( Q(W))_h}{ Q(W)}-\dfrac{( S_l(W))_h}{
S_l(W)}} \abc{(\beta-1)\dfrac{( Q(W))_h}{
Q(W)}-(\beta+1)\dfrac{( S_l(W))_h}{ S_l(W)}}.\nonumber
\end{align}
Furthermore, for any $\delta>0$, we have
\begin{align}\label{1.5}
&- Q^{pp,qq}(W)w_{pph}w_{qqh} +(1-\beta+\dfrac{\beta}{\delta})\dfrac{( Q(W))_h^2}{ Q(W)}\\
\geq & Q(W)(\beta+1-\delta\beta) \abz{\dfrac{( S_l(W))_h}{
S_l(W)}}^2 -\dfrac{ Q}{
S_l}(W)S_l^{pp,qq}(W)w_{pph}w_{qqh}.\nonumber
\end{align}
\end{lemm}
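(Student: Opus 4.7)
The plan is to exploit Condition (Q) via the concave function $F := (Q/S_l)^{\beta}$, with $\beta = 1/(k-l)$. As a concave symmetric function of the eigenvalues, $F$ induces a concave function on the space of symmetric matrices, so $\ddot{F}(B,B)\leq 0$ for every symmetric $B$. Testing this at $B$ diagonal with entries $B_{pp}=w_{pph}$ (approximating by matrices with distinct eigenvalues if necessary to legitimately invoke Lemma \ref{lemm D}), the off-diagonal pieces of formula (\ref{1.9}) vanish and one is left with
$$\sum_{p,q} F^{pp,qq}\,w_{pph}\,w_{qqh}\;\leq\;0. \qquad (\ast)$$

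Next I would write $F=Q^{\beta} S_l^{-\beta}$ and differentiate twice. A direct calculation yields
\begin{align*}
\frac{F^{pp,qq}}{F}=\beta\frac{Q^{pp,qq}}{Q}-\beta\frac{S_l^{pp,qq}}{S_l}+\beta(\beta-1)\frac{Q^p Q^q}{Q^2}+\beta(\beta+1)\frac{S_l^p S_l^q}{S_l^2}-2\beta^2\frac{Q^p S_l^q}{QS_l}.
\end{align*}
Contracting with $w_{pph}w_{qqh}$, using $\sum_p Q^p w_{pph}=Q_h$ and $\sum_p S_l^p w_{pph}=S_{l,h}$, and dividing by $\beta F>0$, inequality $(\ast)$ becomes
$$-\frac{Q^{pp,qq}}{Q}w_{pph}w_{qqh}+\frac{S_l^{pp,qq}}{S_l}w_{pph}w_{qqh}\;\geq\;(\beta-1)\left(\frac{Q_h}{Q}\right)^2-2\beta\,\frac{Q_h}{Q}\frac{S_{l,h}}{S_l}+(\beta+1)\left(\frac{S_{l,h}}{S_l}\right)^2.$$
The right-hand side factors as $(a-b)\bigl((\beta-1)a-(\beta+1)b\bigr)$ with $a=Q_h/Q$ and $b=S_{l,h}/S_l$, which is exactly the inequality (\ref{1.4}).

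For (\ref{1.5}), I would start from (\ref{1.4}) after multiplying by $Q>0$, and bound the cross term by Cauchy--Schwarz with parameter $\delta>0$:
$$-2\beta\,\frac{Q_h}{Q}\frac{S_{l,h}}{S_l}\;\geq\;-\frac{\beta}{\delta}\left(\frac{Q_h}{Q}\right)^2-\delta\beta\left(\frac{S_{l,h}}{S_l}\right)^2.$$
Transposing the term in $Q_h^2/Q$ to the left and regrouping regenerates precisely the coefficients $(1-\beta+\beta/\delta)$ and $(\beta+1-\delta\beta)$ that appear in (\ref{1.5}).

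The main obstacle is the bookkeeping in expanding $F^{pp,qq}$ and verifying that the resulting quadratic form factors in the compact way shown on the right of (\ref{1.4}); once this algebraic identity is in hand, both inequalities are forced by concavity of $F$ plus an elementary Cauchy--Schwarz, so Condition (Q) is all the analytic input that is really used.
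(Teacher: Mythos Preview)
Your argument is correct and is precisely the approach the paper has in mind when it defers to \cite{GLL} and \cite{GRW}: take $F=(Q/S_l)^{\beta}$, use its concavity from Condition~(Q) to get $\sum F^{pp,qq}w_{pph}w_{qqh}\le 0$, expand the Hessian of $F$ via the chain rule, and then apply Cauchy--Schwarz with parameter $\delta$ to pass from (\ref{1.4}) to (\ref{1.5}). One small simplification: you do not actually need Lemma~\ref{lemm D} or any approximation by distinct eigenvalues, since concavity of the scalar function $f(\kappa)=(Q(\kappa)/S_l(\kappa))^{\beta}$ already gives $\sum_{p,q}\ddot f^{pq}\xi_p\xi_q\le 0$ for every vector $\xi$, and you may take $\xi_p=w_{pph}$ directly.
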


Set $u(X)=\langle X ,\nu(X)\rangle$ which is the support function of
$M$. By the assumption of Theorem \ref{theo2} that $M$ is convex
with a $C^1$ bound, $u$ is bounded from below and above by two
positive constants. At every point in the hypersurface $M$, choose a
local coordinate frame $\{ \p/(\p x_1),\cdots,\p/(\p x_{n+1})\}$ in
$\mathbb{R}^n$ such that the first $n$ vectors are the local
coordinates of the hypersurface and the last one is the unit outer
normal vector.  Still denote $\nu$ to be the outer normal vector as
in section 1. We let $h_{ij}$  be the second fundamental form of the
hypersurface $M$.  The following geometric formulas are well known
(e.g., \cite{GLL, GRW}).

\begin{equation}
h_{ij}=\langle\partial_iX,\partial_j\nu\rangle,
\end{equation}
and
\begin{equation}
\begin{array}{rll}
X_{ij}=& -h_{ij}\nu\quad {\rm (Gauss\ formula)}\\
(\nu)_i=&h_{ij}\partial_j\quad {\rm (Weigarten\ equation)}\\
h_{ijk}=& h_{ikj}\quad {\rm (Codazzi\ formula)}\\
R_{ijkl}=&h_{ik}h_{jl}-h_{il}h_{jk}\quad {\rm (Gauss\ equation)},\\
\end{array}
\end{equation}
where $R_{ijkl}$ is the $(4,0)$-Riemannian curvature tensor. We also
have
\begin{equation}
\begin{array}{rll}
h_{ijkl}=& h_{ijlk}+h_{mj}R_{imlk}+h_{im}R_{jmlk}\\
=& h_{klij}+(h_{mj}h_{il}-h_{ml}h_{ij})h_{mk}+(h_{mj}h_{kl}-h_{ml}h_{kj})h_{mi}.\\
\end{array}
\end{equation}

As in \cite{LRW}, we also use the $m$-polynomials.  Here, $m$ should be sufficiently large and determined later. We consider the following test function
\begin{align}
\varphi=\log P_m-mZ\log u, \text{ where } P_m=\sum_j\kappa_j^m.
\end{align}
Here  $Z$ is some undetermined constant. Suppose that
function $\varphi$ achieves its maximum value on $M$ at some
point $X_0$. Rotating the coordinates, we assume that $(h_{ij})$ is
diagonal matrix at $X_0$, and $\kappa_1\geq \kappa_2\cdots\geq
\kappa_n$.

\par
Differentiating our test function twice and using Lemma \ref{lemm D}, at $X_0$, we have
\begin{equation}\label{0.2}
\dfrac{\dsum_j\kappa_j^{m-1}h_{jji}}{P_m}-Z\frac{\langle X,\p_i\rangle}{u}h_{ii}=0,
\end{equation}
and we have
\begin{align}\label{0.3}
0\geq &\dfrac{1}{P_m}[\dsum_j\kappa_j^{m-1}h_{jjii}+(m-1)\dsum_j\kappa_j^{m-2}h_{jji}^2
+\dsum_{p\neq q}\dfrac{\kappa_p^{m-1}-\kappa_q^{m-1}}{\kappa_p-\kappa_q}h_{pqi}^2] \\
&- \frac{Z\sum_lh_{iil}\langle \p_l,X \rangle}{u}-\frac{
Zh_{ii}}{u}+Zh_{ii}^2+Z\frac{h_{ii}^2\langle
X,\p_i\rangle^2}{u^2}\nonumber
\end{align}
\par
At $X_0$, differentiating equation \eqref{EQ} twice, we have
\begin{align}\label{e2.14}
Q^{ii}h_{iik}=d_X\psi(\p_k)+h_{kk}d_{\nu}\psi(\p_k),
\end{align}
and we also have
\begin{eqnarray}\label{e2.15}
Q^{ii}h_{iikk}+Q^{pq,rs}h_{pqk}h_{rsk}\geq
-C-Ch_{11}^2+\sum_lh_{lkk}d_{\nu}\psi(\p_l),
\end{eqnarray}
where $C$ is some uniform constant depending on $C^0$ and $C^1$ setting of the hyperurface.  Using (\ref{0.2}) and (\ref{e2.14}), we have
$$
\dfrac{1}{P_m}\dsum_l\dsum_s\kappa_l^{m-1}d_{\nu}\psi(\p_s)h_{sll}-\frac{Z}{u}\sum_kQ^{ii}h_{iik}\langle  \p_k, X\rangle\\
=-\frac{Z}{u}\sum_kd_X\psi(\p_k)\langle X,\p_k\rangle.\nonumber
$$
On the other hand, using Lemma \ref{lemm D}, we have
\begin{align*}
Q^{pq,rs}h_{pql}h_{rsl}=&Q^{pp,qq}h_{ppl}h_{qql}- \sum_{p\neq
q}Q^{pp,qq}h_{pql}^2.
\end{align*}
Then, contacting $Q^{ii}$ in both side of (\ref{0.3}), and using
(\ref{e2.14}), (\ref{e2.15}), we have
\begin{align}
0\geq&\dfrac{1}{P_m}[\sum_l\kappa_l^{m-1}(-C-C(K)h_{11}^2
+KQ_l^2- Q^{pp,qq}h_{ppl}h_{qql}+\sum_{p\neq q}Q^{pp,qq}h_{pql}^2)\nonumber\\
&+(m-1) Q^{ii}\dsum_j\kappa_j^{m-2}h_{jji}^2
+ Q^{ii}\dsum_{p\neq q}\dfrac{\kappa_p^{m-1}-\kappa_q^{m-1}}{\kappa_p-\kappa_q}h_{pqi}^2]\nonumber\\
&-\dfrac{m Q^{ii}}{P_m^2}(\dsum_j \kappa_j^{m-1}h_{jji})^2 +(Z-1)
Q^{ii}h_{ii}^2 +ZQ^{ii}\frac{h_{ii}^2\langle
X,\p_i\rangle^2}{u^2}-C(Z,K)\kappa_1.\nonumber
\end{align}
\par
Let's deal with the third order derivatives.
Denote
\par
$A_i=\dfrac{\kappa_i^{m-1}}{P_m}(KQ_i^2-\dsum_{p,q}
Q^{pp,qq}h_{ppi}h_{qqi})$, ~~
$B_i=\dfrac{2}{P_m}\dsum_j \kappa_j^{m-1}Q^{jj,ii}h_{jji}^2$,
\par
$C_i=\dfrac{m-1}{P_m} Q^{ii}\dsum_j\kappa_j^{m-2}h_{jji}^2$,~~
$D_i=\dfrac{2 Q^{jj}}{P_m}\dsum_{j\neq
i}\dfrac{\kappa_j^{m-1}-\kappa_i^{m-1}}{\kappa_j-\kappa_i}h_{jji}^2$,
\par
$E_i=\dfrac{m Q^{ii}}{P_m^2}(\dsum_j \kappa_j^{m-1}h_{jji})^2$.
\par
We divide two cases to deal with the third order deriavatives, $i\neq 1$ and $i=1$.
\begin{lemm}\label{lemma8}
For any $i\neq 1$, we have
$$A_i+B_i+C_i+D_i-E_i\geq 0,$$ for sufficiently large $m$.
\end{lemm}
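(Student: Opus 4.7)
\textbf{Plan for Lemma~\ref{lemma8}.} The goal is that, for fixed $i \neq 1$ and $m$ sufficiently large, the four non-negative reserves $A_i, B_i, C_i, D_i$ dominate the single negative term $E_i = \frac{mQ^{ii}}{P_m^2}\bigl(\sum_j \kappa_j^{m-1}h_{jji}\bigr)^2$. Three ingredients are available: positivity of every $\kappa_j$ from convexity; the concavity inequality \eqref{1.5} of Lemma~\ref{lGuan}, delivered by Condition (Q); and the relation $\kappa_i \leq \kappa_1$, characterizing $i\neq 1$.

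The backbone is a Cauchy--Schwarz bound on $E_i$. Using $\kappa_j > 0$,
\[
\left(\sum_j \kappa_j^{m-1}h_{jji}\right)^2 \leq P_m \sum_j \kappa_j^{m-2}h_{jji}^2,
\]
which gives the preliminary estimate $E_i \leq \tfrac{m}{m-1}C_i$, leaving a deficit of order $C_i/(m-1)$. To absorb this deficit sharply, I would refine the splitting by writing $\sum_j \kappa_j^{m-1}h_{jji} = \kappa_1^{m-1}h_{11i}+\sum_{j\geq 2}\kappa_j^{m-1}h_{jji}$ and applying $(a+b)^2 \leq (1+\tau)a^2+(1+\tau^{-1})b^2$ with $\tau$ of order $m$, followed by Cauchy--Schwarz on the tail. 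The tail contribution then lies inside $C_i$ up to a bounded factor; any leftover is absorbed by $D_i$, whose $j\neq i$ summands contain $\frac{2}{P_m}\sum_{j\neq i}Q^{jj}\kappa_j^{m-2}h_{jji}^2$ because the divided difference $\frac{\kappa_j^{m-1}-\kappa_i^{m-1}}{\kappa_j-\kappa_i} = \sum_{l=0}^{m-2}\kappa_j^{m-2-l}\kappa_i^l$ is bounded below by $\kappa_j^{m-2}$.

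The main obstacle is the ``top'' piece of $E_i$, namely $\tfrac{m(1+\tau)Q^{ii}\kappa_1^{2(m-1)}}{P_m^2}h_{11i}^2$. Since $\kappa_1 \geq \kappa_i$ forces $Q^{11} \leq Q^{ii}$, the $j=1$ summand of $D_i$ alone is too small. To close the gap one must bring in the $j=1$ summand of $B_i$, namely $\tfrac{2\kappa_1^{m-1}Q^{11,ii}}{P_m}h_{11i}^2$, whose weight is larger by the factor $\kappa_1$ and which is related to $Q^{ii}$ through Newton-type identities; together with the concavity remainder produced inside $A_i$ by applying Lemma~\ref{lGuan} with the $S_l$ from Condition (Q) and a suitable $\delta$ (with $K$ large), this provides sufficient positive mass on the $h_{11i}^2$ mode. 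Fixing $K$ first, then $\tau$ and $m$ large enough, makes every residual error non-positive and yields $A_i + B_i + C_i + D_i \geq E_i$. The subsequent (and genuinely harder) case $i=1$ is handled separately.
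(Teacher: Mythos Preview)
Your plan contains a genuine gap in the handling of the ``top'' piece. Take the regime where $\kappa_1$ is large and $\kappa_i$ is of order $1$ (perfectly admissible since $i\neq 1$). With $\tau$ of order $m$, the top contribution to $E_i$ has coefficient $\frac{m(1+\tau)Q^{ii}\kappa_1^{2m-2}}{P_m^2}\sim \frac{m^2 Q^{ii}\kappa_1^{m-2}}{P_m}$ on $h_{11i}^2$. The positive sources you list are far smaller: $C_i$'s $j=1$ summand gives $(m-1)Q^{ii}\kappa_1^{m-2}/P_m$; combining $B_i$'s $j=1$ summand with the leading term of $D_i$ via the identity $\kappa_1 Q^{11,ii}+Q^{11}\geq Q^{ii}$ adds only $2Q^{ii}\kappa_1^{m-2}/P_m$; the remaining terms of $D_i$'s $j=1$ divided difference all carry a factor $Q^{11}$ or $\kappa_i$, hence are negligible here. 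And $A_i$ cannot rescue you: its overall weight is $\kappa_i^{m-1}/P_m$, which in this regime is of order $(\kappa_i/\kappa_1)^{m-1}\kappa_1^{-1}$, exponentially small in $m$. So the $h_{11i}^2$ deficit is of order $m^2$ against a budget of order $m$, and no choice of $K$, $\tau$, or $m$ fixes this.

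The paper's argument is organized around a different pivot: it is the index $i$, not $1$, that is singled out. One expands $E_i$ into diagonal plus cross terms. For each $j\neq i$ the diagonal piece $mQ^{ii}\kappa_j^{2m-2}$ is absorbed by combining $B_i$ with the top term of $D_i$'s divided difference, through the key identity $\kappa_j Q^{jj,ii}+Q^{jj}=\kappa_i Q^{ii,jj}+Q^{ii}\geq Q^{ii}$, which upgrades the coefficient from $m-1$ to $m+1$; two further terms of $D_i$ (using $\kappa_jQ^{jj}\geq \kappa_iQ^{ii}$ when $\kappa_j>\kappa_i$, and $Q^{jj}\geq Q^{ii}$ otherwise) push it to $m+5$. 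The cross terms among $\{j\neq i\}$ are then controlled by Cauchy--Schwarz, leaving a $2\times 2$ quadratic form in $(h_{11i},h_{iii})$ whose discriminant is nonpositive for $m\geq 6$. The crucial point you are missing is that the $h_{iii}^2$ coefficient in $C_i$, namely $(m-1)(P_m-\kappa_i^m)\kappa_i^{m-2}Q^{ii}\geq (m-1)\kappa_1^m\kappa_i^{m-2}Q^{ii}$, is large precisely when $\kappa_1$ is, and this is what balances the cross term $-2m\kappa_i^{m-1}\kappa_1^{m-1}h_{iii}h_{11i}$ hidden inside $E_i$. Your AM--GM split at the outset destroys exactly this pairing. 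Note also that the paper uses only $A_i\geq 0$; no refined application of Lemma~\ref{lGuan} is needed for this lemma.
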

\begin{proof}
 At first, by Lemma \ref{lGuan}, for sufficiently large $K$, we have
\begin{equation}\label{0.8}
KQ_l^2- Q^{pp,qq}h_{ppl}h_{qql}\geq
Q(1+\dfrac{\beta}{2})[\dfrac{( S_1)_l}{ S_1}]^2\geq 0.
\end{equation}
Hence, $A_i\geq 0$.
\par
Then, we also have
\begin{align}\label{0.9}
&P_m^2[B_i+C_i+D_i-E_i]\\
=&\sum_{j\neq i}P_m[2\kappa_j^{m-1} Q^{jj,ii}+(m-1)\kappa_j^{m-2}
Q^{ii}
+2 Q^{jj}\sum_{l=0}^{m-2}\kappa_i^{m-2-l}\kappa_j^l]h_{jji}^2\nonumber\\
&+P_m(m-1) Q^{ii}\kappa_i^{m-2}h_{iii}^2\nonumber\\
&-m Q^{ii}(\sum_{j\neq
i}\kappa_j^{2m-2}h_{jji}^2+\kappa_i^{2m-2}h_{iii}^2+\dsum_{p\neq
q}\kappa_p^{m-1}\kappa_q^{m-1}h_{ppi}h_{qqi})\nonumber.
\end{align}
Note that
\begin{align*}
\kappa_j Q^{jj,ii}+ Q^{jj}= \kappa_iQ^{ii,jj}+ Q^{ii} \geq  Q^{ii}.
\end{align*}
For any index $j\neq i$, using the above inequality, we have,

\begin{align}\label{0.11}
&P_m[2\kappa_j^{m-1} Q^{jj,ii}+(m-1)\kappa_j^{m-2} Q^{ii}
+2 Q^{jj}\sum_{l=0}^{m-2}\kappa_i^{m-2-l}\kappa_j^l]h_{jji}^2\\
&-mQ^{ii}\kappa_j^{2m-2}u_{jji}^2\nonumber\\
\geq&(m+1)(P_m-\kappa_j^m) Q^{ii}\kappa_j^{m-2}h_{jji}^2+2P_m
Q^{jj}(\sum_{l=0}^{m-3}\kappa_i^{m-2-l}\kappa_j^l)h_{jji}^2\nonumber.
\end{align}

Using Cauchy-Schwarz inequality, we have
\begin{align}\label{0.12}
2\sum_{j\neq i}\sum_{p\neq i,j}\kappa_j^{m-2}\kappa_p^{m}h_{jji}^2\geq 2\dsum_{p\neq q;p,q\neq
i}\kappa_p^{m-1}\kappa_q^{m-1}h_{ppi}h_{qqi}.
\end{align}
Hence, using \eqref{0.11} and \eqref{0.12} in \eqref{0.9}, we obtain
\begin{align}\label{0.13}
&P_m^2(B_i+C_i+D_i-E_i)\\
\geq&\sum_{j\neq i}[(m+1)\kappa_i^m\kappa_j^{m-2} Q^{ii}+2P_m
Q^{jj}\sum_{l=0}^{m-3}\kappa_i^{m-2-l}\kappa_j^{l}]h_{jji}^2\nonumber\\
&+((m-1)(P_m-\kappa_i^m) -\kappa_i^m)\kappa_i^{m-2}
Q^{ii}h_{iii}^2-2m Q^{ii}\kappa_i^{m-1}h_{iii}\sum_{j\neq
i}\kappa_j^{m-1}h_{jji}\nonumber.
\end{align}
Obviously, $Q^{jj}>Q^{ii}$ for $\kappa_j<\kappa_i$ and
$\kappa_jQ^{jj}>\kappa_iQ^{ii}$ for $\kappa_j>\kappa_i$. Thus, for
$m\geq 6$, and only use $l=m-3,m-4$ we get
\begin{align*}
2P_m Q^{jj}\sum_{l=0}^{m-3}\kappa_i^{m-2-l}\kappa_j^{l}=&2P_m
Q^{jj}(\kappa_i\kappa_j^{m-3}+\kappa_i^{2}\kappa_j^{m-4})+2P_m
Q^{jj}\sum_{l=0}^{m-5}\kappa_i^{m-2-l}\kappa_j^{l}\\
\geq &4\kappa_i^m\kappa_j^{m-2}Q^{ii}+2P_m
Q^{jj}\sum_{l=0}^{m-5}\kappa_i^{m-2-l}\kappa_j^{l}.
\end{align*}
Then,  \eqref{0.13} becomes
\begin{align}\label{0.17}
&P_m^2(B_i+C_i+D_i-E_i)\\
\geq&\sum_{j\neq i}(m+5)\kappa_i^m\kappa_j^{m-2} Q^{ii}h_{jji}^2
+((m-1)(P_m-\kappa_i^m) -\kappa_i^m)\kappa_i^{m-2}
Q^{ii}u_{iii}^2+ \nonumber\\
&-2m Q^{ii}\kappa_i^{m-1}h_{iii}\sum_{j\neq
i}\kappa_j^{m-1}h_{jji}+2P_m
\sum_{j\neq i}Q^{jj}\sum_{l=0}^{m-5}\kappa_i^{m-2-l}\kappa_j^{l}h_{jji}^2\nonumber\\
\geq&(m+5)\kappa_i^m\kappa_1^{m-2} Q^{ii}h_{11i}^2
+((m-1)\kappa_1^m-\kappa_i^m)\kappa_i^{m-2}
Q^{ii}u_{iii}^2 \nonumber\\
&-2m Q^{ii}\kappa_i^{m-1}h_{iii}\kappa_1^{m-1}h_{11i}\nonumber\\
\geq& 0.\nonumber
\end{align}
Here, we have used, for $m\geq 6$, $$(m+5)(m-1)\geq m^2\quad
and\quad  (m+5)(m-2)\geq m^2.$$
\end{proof}

The left case is $i=1$.  Let's continue to prove the  following Lemma which is modified from \cite{GRW}.

\begin{lemm}\label{lemma2}
Suppose that function $Q$ satisfying Condition (Q). For
$\mu=1,\cdots, k-1$, suppose that there is  some positive constant
$\delta\leq 1 $ satisfying  $\kappa_{\mu}/\kappa_1\geq \delta$ and
$\alpha_1=\cdots=\alpha_{\mu}=0$ in \eqref{EQ}. Then there exits
another sufficient small positive constant $\delta'$ depending on
$\delta$,  such that, if  $\kappa_{\mu+1}/\kappa_1\leq \delta'$,  we
have
$$A_1+B_1+C_1+D_1-E_1\geq 0.$$
\end{lemm}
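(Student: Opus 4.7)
The plan is to adapt the block-decomposition analysis of \cite{GRW} for the case $i=1$ to our combined operator $Q$, using Condition (Q) applied at level $l=\mu$. The structural hypothesis $\alpha_1=\cdots=\alpha_\mu=0$ should play exactly the complementary role to $\kappa_\mu/\kappa_1\geq\delta$: the large curvatures $\kappa_1,\ldots,\kappa_\mu$ are precisely those that do not appear as lowest-order factors in $Q$. Throughout I would write $I=\{1,\ldots,\mu\}$ for the large block and $J=\{\mu+1,\ldots,n\}$ for the small block, and track carefully how $\delta'$ is to be chosen in terms of $\delta$, $n$, $k$, and the fixed coefficients.

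First I would apply inequality \eqref{1.5} of Lemma \ref{lGuan} with $l=\mu$ and a small fixed parameter to $A_1$. This yields
\[
KQ_1^2-Q^{pp,qq}h_{pp1}h_{qq1}\geq Q(\beta+1-\delta_0\beta)\left[\frac{(S_\mu)_1}{S_\mu}\right]^2-\frac{Q}{S_\mu}S_\mu^{pp,qq}h_{pp1}h_{qq1},
\]
with $\beta=1/(k-\mu)$. The problem then reduces to absorbing the negative bad term $-(Q/S_\mu)S_\mu^{pp,qq}h_{pp1}h_{qq1}$ into the third-order combination $B_1+C_1+D_1-E_1$. Since $S_\mu$ has degree $\mu$, the monomials in $S_\mu^{pp,qq}$ that are of leading order in $\kappa_1$ involve only pairs of indices in $I$, so the bad term splits naturally along the block decomposition.

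Next I would treat $B_1+C_1+D_1-E_1$ by separating the index $j$ according to $I$ and $J$. For $j\in I$, the contribution mimics the pure $k$-Hessian computation of \cite{GRW,LRW}: using $\kappa_j\geq\delta\kappa_1$ together with the identity $\kappa_jQ^{jj,11}+Q^{jj}=\kappa_1Q^{11,jj}+Q^{11}$, one obtains positive quadratic forms comparable to $\kappa_1^{m-1}Q^{11}h_{jj1}^2$, which are sufficient to dominate the $j\in I$ portion of the bad term. For $j\in J$ the small factor $\kappa_j\leq\delta'\kappa_1$ enters favorably: applying Cauchy--Schwarz to the cross terms $\kappa_1^{m-1}\kappa_j^{m-1}h_{111}h_{jj1}$ inside $E_1$ produces errors bounded by $\delta'$ times positive quantities, while the diagonal parts of $C_1$ and $D_1$ remain genuinely positive.

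The principal obstacle is verifying that the bad term from the first step can indeed be absorbed uniformly, since $Q/S_\mu$ is not bounded a priori when $\kappa_{\mu+1},\ldots,\kappa_n$ are very small. The saving comes precisely from $\alpha_1=\cdots=\alpha_\mu=0$: every monomial of $Q$ then involves at least $\mu+1$ distinct curvatures, so $Q/S_\mu$ carries enough weight in the small curvatures to be controlled on $J$, while on $I$ the ratio is uniformly bounded by the comparability $\kappa_j\sim\kappa_1$. Choosing $\delta'$ sufficiently small depending on $\delta$, $n$, $k$, the coefficients $\alpha_s$, and the constants $\beta_s^\mu$ appearing in Condition (Q) should complete the absorption and yield $A_1+B_1+C_1+D_1-E_1\geq 0$.
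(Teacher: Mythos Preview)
Your overall architecture --- apply Lemma~\ref{lGuan} at level $l=\mu$, split into the blocks $I=\{1,\dots,\mu\}$ and $J=\{\mu+1,\dots,n\}$, and use $\alpha_1=\cdots=\alpha_\mu=0$ on the $J$-block --- matches the paper. But the way you describe the flow of positivity is reversed, and this hides the real technical core.

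You propose to keep $[(S_\mu)_1/S_\mu]^2$ as the positive output of $A_1$ and then ``absorb'' the residual term $-(Q/S_\mu)S_\mu^{pp,qq}h_{pp1}h_{qq1}$ into $B_1+C_1+D_1-E_1$. The paper does the opposite. From the analogue of the $i\neq 1$ computation one gets only
\[
P_m^2(B_1+C_1+D_1-E_1)\ \geq\ -\kappa_1^{2m-2}Q^{11}h_{111}^2+2P_m\kappa_1^{m-2}\sum_{j\neq 1}Q^{jj}h_{jj1}^2,
\]
so the $j=1$ term is \emph{negative}; $B_1+C_1+D_1-E_1$ does not supply positivity for $h_{111}^2$, and the identity $\kappa_jQ^{jj,11}+Q^{jj}=\kappa_1Q^{11,jj}+Q^{11}$ you invoke helps only for $j\neq 1$. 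The cancellation for $h_{111}^2$ must therefore come from $A_1$. To extract it, the paper does not keep $[(S_\mu)_1]^2$ intact but rather expands the right-hand side of \eqref{1.5} as
\[
(1+\tfrac{\beta}{2})\sum_a (S_\mu^{aa}h_{aa1})^2+\tfrac{\beta}{2}\sum_{a\neq b}S_\mu^{aa}S_\mu^{bb}h_{aa1}h_{bb1}+\sum_{a\neq b}\bigl(S_\mu^{aa}S_\mu^{bb}-S_\mu S_\mu^{aa,bb}\bigr)h_{aa1}h_{bb1},
\]
and the substantial work is a case-by-case control of the Newton-type cross terms $S_\mu^{aa}S_\mu^{bb}-S_\mu S_\mu^{aa,bb}=S_{\mu-1}^2(\kappa|ab)-S_\mu(\kappa|ab)S_{\mu-2}(\kappa|ab)$ for $a,b\leq\mu$, for $a\leq\mu<b$, and for $a,b>\mu$. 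This yields
\[
P_m^2A_1\ \geq\ \kappa_1^{2m-2}Q^{11}\sum_{a\leq\mu}h_{aa1}^2-\frac{C_\epsilon}{\delta^2}P_m\kappa_1^{m-3}\sum_{a>\mu}h_{aa1}^2,
\]
and the first sum kills the $-\kappa_1^{2m-2}Q^{11}h_{111}^2$ above. The square $[(S_\mu)_1]^2$ alone, being rank one, cannot give this diagonal control.

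Your reading of the role of $\alpha_1=\cdots=\alpha_\mu=0$ is close but slightly off: it is not about bounding $Q/S_\mu$. After the cancellation just described, what remains is $2P_m\kappa_1^{m-2}\sum_{j>\mu}Q^{jj}h_{jj1}^2-(C_\epsilon/\delta^2)P_m\kappa_1^{m-3}\sum_{j>\mu}h_{jj1}^2$, and one needs $\kappa_1 Q^{jj}\geq Q/(C\delta')$ for every $j>\mu$. Since each $\sigma_s$ with $\alpha_s\neq 0$ has $s>\mu$, one gets $\sigma_s^{jj}\gtrsim\sigma_s/\kappa_s\geq\sigma_s/(\delta'\kappa_1)$ for $s\leq j$ and $\sigma_s^{jj}\gtrsim\sigma_s/\kappa_j\geq\sigma_s/(\delta'\kappa_1)$ for $s>j$, whence $\kappa_1 Q^{jj}\geq Q/(C\delta')$. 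This is where the vanishing of the low-order $\alpha$'s enters.
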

\begin{proof}
At first, note that $Q^{jj}\geq Q^{11}$ for all $j>1$, just like
\eqref{0.17} in Lemma \ref{lemma8}, for $m\geq 6$, we have
\begin{align}\label{3.19}
&P_m^2(B_1+C_1+D_1-E_1)\\
\geq&-\kappa_1^{2m-2} Q^{11}h_{111}^2+2P_m
\sum_{j\neq 1}Q^{jj}\sum_{l=0}^{m-5}\kappa_1^{m-2-l}\kappa_j^{l}h_{jj1}^2\nonumber\\
\geq&-\kappa_1^{2m-2} Q^{11}h_{111}^2
+2P_m\kappa_1^{m-2}\dsum_{j\neq 1} Q^{jj}h_{jj1}^2\nonumber.
\end{align}
Using Condition (Q) and Lemma \ref{lGuan}, we have
\begin{align}\label{3.20}
A_1\geq &\frac{\kappa_1^{m-1} Q}{P_m
S_{\mu}^2}[(1+\frac{\beta}{2})\sum_{a}(
S_{\mu}^{aa}h_{aa1})^2+\frac{\beta}{2}\sum_{a\neq b} S_{\mu}^{aa}
S_{\mu}^{bb}h_{aa1}h_{bb1}\\
&+\sum_{a\neq b}( S_{\mu}^{aa}
S_{\mu}^{bb}- S_{\mu} S_{\mu}^{aa,bb})h_{aa1}h_{bb1}].\nonumber
\end{align}
For $\mu=1$,  notice that $ S_1^{aa}=1$ and $ S_1^{aa,bb}=0$. Then,
we have
\begin{align}
(1+\frac{\beta}{2})\sum_{a,b} h_{aa1}h_{bb1}
\geq&(1+\frac{\beta}{4})h_{111}^2-C_{\beta}\sum_{a\neq
1}h_{aa1}^2.
\end{align}
Then, by equation \eqref{EQ}, we obtain
\begin{align}\label{3.22}
P_m^2A_1
\geq&\frac{P_m\kappa_1^{m-2} Q^{11}}{(1+\sum_{j\neq
1}\kappa_j/\kappa_1+\beta_0^1/\kappa_1)^2}(1+\frac{\beta}{4})h^2_{111}
-\frac{C_{\beta} P_m\kappa_1^{m-1}}{ S_1^2}\sum_{a\neq 1}h_{aa1}^2\\
\geq&P_m\kappa_1^{m-2} Q^{11}h_{111}^2 -\frac{C_{\beta}
P_m\kappa_1^{m-1}}{ S_1^2}\sum_{a\neq 1}h_{aa1}^2.\nonumber
\end{align}
The last two inequalities comes from
$$ Q\geq \kappa_1 Q^{11},\text{ and }
\frac{\kappa_j}{\kappa_1}\leq \delta' ; 1+\frac{\beta}{4}\geq
(1+n\delta')^2.
$$
For $\mu\geq 2$ and $a\neq b$, we have
\begin{align}\label{3.24}
&S_{\mu}^{aa} S_{\mu}^{bb}- S_{\mu} S_{\mu}^{aa,bb}=S_{\mu-1}^2(\kappa|ab)- S_{\mu}(\kappa|ab) S_{\mu-2}(\kappa|ab).
\end{align}
It is obvious that we have, for any
$a,b\leq \mu$,
\begin{equation}\label{3.25}
 S_\mu^{aa}\geq  \frac{\kappa_1\cdots\kappa_{\mu}}{\kappa_a}; \ \ S^{bb}_{\mu}\geq \frac{\kappa_1\cdots\kappa_{\mu}}{\kappa_b}.
 \end{equation}
 For $a,b\leq \mu$, if $\kappa_1$ is sufficient large, we have
 \begin{align}\label{3.26}
  S_{\mu-1}(\kappa|ab)&\leq
 C(\frac{\kappa_1\cdots\kappa_{\mu}}{\kappa_a\kappa_b}+\frac{\kappa_1\cdots\kappa_{\mu+1}}{\kappa_a\kappa_b})\leq C\frac{1+\kappa_{\mu+1}}{\kappa_{b}}S_{\mu}^{aa},\\
  S_{\mu}(\la|ab)&\leq C(\frac{\kappa_1\cdots\kappa_{\mu}}{\kappa_a\kappa_b}+\frac{\kappa_1\cdots\kappa_{\mu+1}}{\kappa_a\kappa_b}+\frac{\kappa_1\cdots\kappa_{\mu+2}}{\kappa_a\kappa_b})\nonumber\\
 &\leq C\frac{1+\kappa_{\mu+1}+\kappa_{\mu+1}\kappa_{\mu+2}}{\kappa_b}S^{aa}_{\mu}, \nonumber\\
  S_{\mu-2}(\kappa|ab)&\leq C\frac{\kappa_1\cdots\kappa_{\mu}}{\kappa_a\kappa_b}\leq C\frac{1}{\kappa_b}S^{aa}_{\mu}.\nonumber
 \end{align}
Then, by (\ref{3.26}), we have, for any undetermined positive
constant $\epsilon$,
\begin{align}\label{3.28}
&\sum_{a\neq b; a,b\leq \mu}( S_{\mu}^{aa} S_{\mu}^{bb}- S_{\mu} S_{\mu}^{aa,bb})h_{aa1}h_{bb1}\\
\geq &-\sum_{a\neq b; a,b\leq \mu}( S_{\mu-1}^2(\kappa|ab)+S_{\mu}(\kappa|ab) S_{\mu-2}(\kappa|ab))h_{aa1}^2\nonumber\\
\geq&-\frac{C_2}{\delta^2}\frac{(1+\delta'\kappa_1)^2+1+\delta'\kappa_1+(\delta')^2\kappa_1}{\kappa^2_1}\sum_{a\leq\mu}(
S^{aa}_{\mu}h_{aa1})^2 \geq  -\epsilon\sum_{a\leq\mu}(
S_{\mu}^{aa}h_{aa1})^2\nonumber.
\end{align}
Here, we choose a sufficiently small $\delta'$, such that
\begin{align}\label{3.29}
\delta'\leq& \frac{\epsilon\delta^2}{5C_2},\ \  \frac{1}{\kappa_1}\leq \delta \sqrt{\frac{\epsilon}{5C_2}}.
\end{align}
For $a\leq\mu$ and $b>\mu$, we have
$$S_{\mu}^{aa}\geq \frac{\kappa_1\cdots\kappa_{\mu}}{\kappa_{a}}; \ \ S^{bb}_{\mu}\geq \kappa_1\cdots\kappa_{\mu-1}.$$
Then,
 for $a\leq \mu, b>\mu $, if $\kappa_1$ is sufficient large, we have
 \begin{align}\label{3.30}
  S_{\mu-1}(\kappa|ab)&\leq
 C\frac{\kappa_1\cdots\kappa_{\mu}}{\kappa_a}\leq CS_{\mu}^{aa} \text{or } CS^{bb}_{\mu},\\
  S_{\mu}(\la|ab)&\leq C(\frac{\kappa_1\cdots\kappa_{\mu}}{\kappa_a}+\frac{\kappa_1\cdots\kappa_{\mu+1}}{\kappa_a})
 \leq C(1+\kappa_{\mu+1})S^{aa}_{\mu}, \nonumber\\
  S_{\mu-2}(\kappa|ab)&\leq C\kappa_1\cdots\kappa_{\mu-2}\leq CS_{\mu}^{bb}.\nonumber
 \end{align}
By (\ref{3.30}), we also have
\begin{align}\label{3.31}
&2\sum_{a\leq \mu; b> \mu}( S_{\mu}^{aa} S_{\mu}^{bb}- S_{\mu} S_{\mu}^{aa,bb})h_{aa1}h_{bb1}\\
\geq&-2\sum_{a\leq \mu; b>\mu} (S^2_{\mu-1}(\kappa| ab)+S_{\mu}(\kappa|ab)S_{\mu-2}(\kappa|ab))|h_{aa1}h_{bb1}|\nonumber\\
\geq&-\epsilon\sum_{a\leq \mu; b>\mu}(
S_{\mu}^{aa}h_{aa1})^2-\frac{C}{\epsilon}\sum_{a\leq \mu; b>\mu}(
S_{\mu}^{bb}h_{bb1})^2\nonumber.
\end{align}
For $a, b>\mu$, we have
$$S_{\mu}^{aa}\geq \kappa_1\cdots\kappa_{\mu-1}; \ \ S^{bb}_{\mu}\geq \kappa_1\cdots\kappa_{\mu-1}.$$
Then,
 for $a, b>\mu $, if $\kappa_1$ is sufficient large, we have
 \begin{align}\label{3.32}
  S_{\mu-1}(\kappa|ab)&\leq
 C\kappa_1\cdots\kappa_{\mu-1},\ \  S_{\mu}(\la|ab)\leq C\kappa_1\cdots\kappa_{\mu}\\
  S_{\mu-2}(\kappa|ab)&\leq C\kappa_1\cdots\kappa_{\mu-2}.\nonumber
 \end{align}
By (\ref{3.32}), we have
\begin{align}\label{3.33}
\sum_{a\neq b; a,b> \mu}( S_{\mu}^{aa} S_{\mu}^{bb}- S_{\mu}
S_{\mu}^{aa,bb})h_{aa1}h_{bb1} \geq&-C\sum_{a\neq b; a,b>\mu}(
S_{\mu}^{aa}h_{aa1})^2.
\end{align}
Hence, combing (\ref{3.20}), (\ref{3.28}), (\ref{3.31}) and
(\ref{3.33}), then we get
\begin{align*}
A_1 \geq&\frac{\kappa_1^{m-1} Q}{P_m
S_{\mu}^2}[(1+\frac{\beta}{2}-2\epsilon)\sum_{a\leq \mu}(
S_{\mu}^{aa}h_{aa1})^2-C_{\epsilon}\sum_{a>\mu}(
S_{\mu}^{aa}h_{aa1})^2].
\end{align*}
For $a>\mu$, we have $$ S_{\mu}^{aa}\leq C\kappa_1\cdots\kappa_{\mu-1},\
\ \text{ and }  S_{\mu}\geq \kappa_1\cdots\kappa_{\mu}.$$ For $a\leq
\mu$, we have $$ S_{\mu}(\kappa|a)\leq
C(\frac{\kappa_1\cdots\kappa_{\mu}}{\kappa_a}+\frac{\kappa_1\cdots\kappa_{\mu+1}}{\kappa_{a}}).$$
Then, for sufficient large $\kappa_1$, we have
\begin{align}\label{3.34}
&P_m^2A_1\\
\geq&\kappa^{2m-2}_1 Q^{11}(1+\frac{\beta}{2}-2\epsilon)(1+\delta^m)\sum_{a\leq \mu}(1-\frac{C_3(1+\kappa_{\mu+1})}{\kappa_a})^2h_{aa1}^2-\frac{P_m\kappa_1^{m-3}\kappa_{\mu}^2 C_{\epsilon}}{\delta^2 S_{\mu}^2}\sum_{a>\mu}( S_{\mu}^{aa}h_{aa1})^2\nonumber\\
\geq&\kappa^{2m-2}_1
Q^{11}(1+\frac{\beta}{2}-2\epsilon)(1+\delta^m)(1-\frac{C_3(1+\delta'\kappa_{1})}{\delta\kappa_1})^2
\sum_{a\leq \mu}h_{aa1}^2-\frac{P_m\kappa_1^{m-3} C_{\epsilon}}{\delta^2}\sum_{a>\mu}h_{aa1}^2\nonumber\\
\geq&\kappa_1^{2m-2} Q^{11}\sum_{a\leq
\mu}h_{aa1}^2-\frac{P_m\kappa_1^{m-3}
C_{\epsilon}}{\delta^2}\sum_{a>\mu}h_{aa1}^2.\nonumber
\end{align}
Here, the last inequality comes from that we choose $\delta'$ and
$\epsilon$ satisfying
\begin{align*}
\delta'C_3\leq \epsilon \delta, \frac{C_3}{\kappa_1}\leq
\epsilon\delta \text{ and }
(1+\frac{\beta}{2}-2\epsilon)^2(1+\delta^m)\geq 1.
\end{align*}
Using  (\ref{3.19}) and (\ref{3.22}) or (\ref{3.34}),  we have
\begin{align}\label{3.35}
&P_m^2(A_1+B_1+C_1+D_1-E_1)\\
\geq& 2P_m\kappa_1^{m-2}\sum_{j\neq 1} Q^{jj}h_{jj1}^2
-\dfrac{C_{\epsilon}P_m\kappa_1^{m-3}}{\delta^2}\sum_{j>\mu}h^2_{jj1}.\nonumber
\end{align}
Now, for $k\geq j> \mu$,  we have
\begin{align}
\kappa_1 Q^{jj}= &\kappa_1\sum_{s>j}\alpha_s\sigma_s^{jj}+\kappa_1\sum_{\mu<s\leq j}\alpha_s\sigma_s^{jj}\\
\geq&\sum_{s>j}\frac{\alpha_s\kappa_1\cdots\kappa_{s}\cdot\kappa_1}{\kappa_j}+\sum_{\mu<s\leq j} \frac{\alpha_s\kappa_1\cdots\kappa_{s}\cdot\kappa_1}{\kappa_s}\nonumber\\
\geq&\frac{\kappa_1}{C_4\kappa_j}\sum_{s>j}\alpha_sQ_s+\frac{\kappa_1}{C_4\kappa_s}\sum_{\mu<s\leq
j}\alpha_sQ_s\geq \frac{Q}{C_4\delta'}.\nonumber
\end{align}
For $j\geq k+1$, similar to the above argument, we have
\begin{align}
\kappa_1
Q^{jj}\geq\frac{\kappa_1}{C_4}\sum_{s>\mu}\frac{\alpha_sQ_s}{\kappa_s}\geq
\frac{ Q}{C_4\delta'}.\nonumber
\end{align}
For both cases, chose $\delta'$ small enough satisfying
$$\delta'<\dfrac{ Q\delta^2}{C_4C_{\epsilon}},$$ then
(\ref{3.35}) is nonnegative.  We have our desired results.
\end{proof}

Hence, by Lemma \ref{lemma8} and Lemma
\ref{lemma2} and similar argument in \cite{GRW, LRW}, we have the following Corollary.
\begin{coro}\label{cor4}
There exists some finite sequence of positive numbers
$\{\delta_i\}_{i=1}^k$, such that, if the following inequality holds
for some index $1\leq r \leq k-1$, $$\frac{\kappa_r}{\kappa_1} \ \
\geq \ \ \delta_r, \text{ and } \frac{\kappa_{r+1}}{\kappa_1}\leq
\delta_{r+1},$$ and $\alpha_1=\alpha_2=\cdots=\alpha_{r}=0$ in the
expression \eqref{EQ}, then, for sufficiently large $K$, we have
\begin{align}\label{3.36}
 A_1+B_1+C_1+D_1- E_1 \geq 0.
\end{align}\end{coro}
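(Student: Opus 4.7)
The plan is to construct the sequence $\{\delta_i\}_{i=1}^k$ iteratively using Lemma \ref{lemma2}, so that the corollary reduces to a direct invocation of that lemma at whichever index $r$ triggers the dichotomy. Lemma \ref{lemma2} already tells us, for each $\mu$, that a lower bound $\delta$ on $\kappa_\mu/\kappa_1$ produces a positive threshold $\delta' = \delta'(\delta)$ on $\kappa_{\mu+1}/\kappa_1$ guaranteeing the desired inequality; the only task left is to pick these thresholds consistently across all possible values of $r$.

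Concretely, I would set $\delta_1 = 1$ (so that the trivial bound $\kappa_1/\kappa_1 \geq \delta_1$ always holds) and then recursively define $\delta_{r+1}$ to be the constant $\delta'(\delta_r)$ produced by Lemma \ref{lemma2} with $\mu = r$ and $\delta = \delta_r$, for $r = 1, \ldots, k-1$. Each $\delta_r$ is strictly positive by Lemma \ref{lemma2}, so the construction yields a finite sequence of positive constants depending only on $n$, $k$ and Condition (Q).

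Now given the hypotheses of the corollary for some $r$ with $1 \leq r \leq k-1$, namely $\kappa_r/\kappa_1 \geq \delta_r$, $\kappa_{r+1}/\kappa_1 \leq \delta_{r+1}$, and $\alpha_1 = \cdots = \alpha_r = 0$, I would observe these are precisely the hypotheses of Lemma \ref{lemma2} with $\mu = r$ and $\delta = \delta_r$: indeed the threshold $\delta_{r+1}$ was defined to equal $\delta'(\delta_r)$. Applying that lemma directly yields $A_1 + B_1 + C_1 + D_1 - E_1 \geq 0$ for sufficiently large $K$, completing the proof. Combined with Lemma \ref{lemma8} handling the indices $i \neq 1$, this gives the full third-order derivative estimate needed elsewhere in the argument.

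The main technical work was carried out in Lemma \ref{lemma2}; here the only concern is organizational. The main (minor) obstacle is arranging a single universal sequence $\{\delta_i\}$ that is chosen \emph{before} one knows which index $r$ will arise from the eigenvalue gap. The inductive construction above resolves this by preparing each pairing $(\delta_r, \delta_{r+1})$ in advance: the $\delta_i$'s depend only on $n$, $k$ and Condition (Q), and the identity of $r$ is determined afterwards by the geometry of the principal curvatures at the maximum point $X_0$.
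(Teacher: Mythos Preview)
Your proposal is correct and is precisely the argument the paper intends: the paper gives no explicit proof of this corollary, merely citing Lemma~\ref{lemma8}, Lemma~\ref{lemma2}, and the analogous construction in \cite{GRW, LRW}, and your iterative definition $\delta_1=1$, $\delta_{r+1}=\delta'(\delta_r)$ via Lemma~\ref{lemma2} with $\mu=r$ is exactly that construction. The mention of Lemma~\ref{lemma8} is extraneous for the corollary itself (which concerns only $i=1$), but harmless.
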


Now, we can prove Theorem \ref{theo2}.

By Corollary \ref{cor4}, there exists some sequence $\{\delta_i\}_{i=1}^k$. We use similar tricks as in \cite{GRW, LRW}. The only difference is that for case (B), if there is some index $1\leq r\leq k-1$ satisfying $$\kappa_r\geq \delta_{r}\kappa_1, \ \ \text{ and }
\kappa_{r+1}\leq \delta_{r+1}\kappa_1,$$ and some another index $s\leq r$, such that $\alpha_s\neq 0$, then, by equation, we have
$$\alpha_s\sigma_s(\kappa_1,\cdots,\kappa_n)\leq Q(\kappa_1,\cdots,\kappa_n)=\psi(X,\nu),$$ which implies the bound of $\kappa_1$.

\section{The sum type }
 In this section, we will discuss the simplest type of our equations \eqref{EQ}.
It is the sum type equations defined by \eqref{1.g1}, where integer $k\geq 1$ and $\alpha\geq 0$.

Corresponding our problem, we need a new convex cone $\tilde\Gamma_k$ compatible our equations. We define
\begin{equation}\label{def G1}
\tilde\Gamma_k=\Gamma_{k-1}\ \cap \{ \la \in \mathbb{R}^n;
\al\sigma_{k-1}(\la)+\sigma_{k}(\la)>0\}.\end{equation}
Here $\Gamma_k$ is
the $k$-convex Garding's cone introduced by Caffarelli-Nirenberg-Spruck
\cite{CNS3},
\begin{defi}\label{k-convex} For a domain $\Omega\subset \mathbb R^n$, a function $v\in C^2(\Omega)$ is called $k$-convex if the eigenvalues $\la (x)=(\la_1(x), \cdots, \la_n(x))$ of the hessian $\nabla^2 v(x)$ is in $\Gamma_k$ for all $x\in \Omega$, where $\Gamma_k$ is the Garding's cone
\begin{equation}\label{def G2}
\Gamma_k=\{\la \in \mathbb R^n \ | \quad \sigma_m(\la)>0,
\quad  m=1,\cdots,k\}.
\end{equation}
\par
A $C^2$ regular hypersurface $M\subset R^{n+1}$ is $k-$convex if its principal curvature
$\kappa(X)\in \Gamma_k$ for all $X\in M$.
\end{defi}
 By the definition of $\Gamma_k$ and $\tilde\Gamma_k$, we note that
$$
\Gamma_{k-1}\supseteq \tilde\Gamma_k\supseteq \Gamma_k.
$$

 This section is composed by two parts.  In the first part, we will prove that  the cone $\tilde{\Gamma}_k$ is  the suitable
admissible solutions set. Then in second part, we will discuss the quotient concavity and it can imply the curvature estimates for sum type equations.

\begin{theo}\label{theo5}
The set $\tilde\Gamma_k$ is a convex set. In the cone $\tilde\Gamma_k$, the equation
(\ref{1.g1}) is elliptic.
\end{theo}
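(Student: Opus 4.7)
The plan rests on the elementary identity
\[
Q_S^k(\la) = \al\sigma_{k-1}(\la) + \sigma_k(\la) = \sigma_k(\la_1,\ldots,\la_n,\al),
\]
which expresses $Q_S^k$ at $\la \in \mathbb{R}^n$ as the $k$-th elementary symmetric polynomial of the augmented vector $L(\la) := (\la,\al) \in \mathbb{R}^{n+1}$; more generally $\sigma_j(\la,\al) = \sigma_j(\la) + \al\sigma_{j-1}(\la)$ for every $j$. This augmentation trick lets me read off both assertions from standard G\aa rding-cone facts in $\mathbb{R}^{n+1}$.

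For convexity I would establish the set-theoretic identity
\[
\tilde\Gamma_k \;=\; \Gamma_{k-1}^{(n)} \,\cap\, L^{-1}\!\bigl(\Gamma_k^{(n+1)}\bigr),
\]
where the superscript denotes the ambient dimension of the G\aa rding cone. For the forward inclusion: if $\la \in \tilde\Gamma_k$ then $\la \in \Gamma_{k-1}^{(n)}$ gives $\sigma_j(\la),\sigma_{j-1}(\la) > 0$ for $1 \le j \le k-1$, so $\sigma_j(\la,\al) = \sigma_j(\la) + \al\sigma_{j-1}(\la) > 0$ (using $\al \ge 0$), while $\sigma_k(\la,\al) = Q_S^k(\la) > 0$ by hypothesis, whence $L(\la) \in \Gamma_k^{(n+1)}$. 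The reverse inclusion is immediate since $\sigma_k(\la,\al) > 0$ is precisely $Q_S^k(\la) > 0$. Both $\Gamma_{k-1}^{(n)}$ and $\Gamma_k^{(n+1)}$ are convex by G\aa rding's theorem and affine preimages preserve convexity, so $\tilde\Gamma_k$ is an intersection of two convex sets and is therefore convex.

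For ellipticity I would differentiate through the augmented representation:
\[
\frac{\partial Q_S^k}{\partial \la_i}(\la) \;=\; \al\,\sigma_{k-2}(\la\,|\,i) + \sigma_{k-1}(\la\,|\,i) \;=\; \sigma_{k-1}\bigl((\la,\al)\,|\,i\bigr),
\]
which is the $i$-th partial of $\sigma_k$ in $\mathbb{R}^{n+1}$ evaluated at $L(\la) \in \Gamma_k^{(n+1)}$. The classical Caffarelli--Nirenberg--Spruck property that $\mu \in \Gamma_k$ implies $\sigma_{k-1}(\mu\,|\,i) > 0$ for every $i$ (equivalently $\mu\,|\,i \in \Gamma_{k-1}$) then yields strict positivity of every $\partial Q_S^k/\partial\la_i$ on $\tilde\Gamma_k$, as required.

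The one nontrivial input is the projection property $\mu \in \Gamma_k^{(n+1)} \Rightarrow \mu\,|\,i \in \Gamma_{k-1}^{(n)}$ invoked in the ellipticity step. It is standard, provable by induction on $k$ through the recursion $\sigma_k(\mu) = \sigma_k(\mu\,|\,i) + \mu_i\sigma_{k-1}(\mu\,|\,i)$ together with G\aa rding hyperbolicity and the concavity of $\sigma_k^{1/k}$ on $\Gamma_k$, but it is the one place where more than bookkeeping is needed. Once this is invoked, Theorem \ref{theo5} reduces entirely to the one-variable augmentation of $\la$ by $\al$.
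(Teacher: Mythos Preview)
Your proof is correct and takes a genuinely different route from the paper's.

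The paper proves convexity by expanding $Q_S^k(t\la+(1-t)\tilde\la)$ via polarization of $\sigma_k$ and $\sigma_{k-1}$, invokes G\aa rding's inequality to handle the mixed terms, and then does a sign case-analysis on $\sigma_k(\la)$ to show the remaining diagonal terms are positive. For ellipticity the paper rewrites
\[
(Q_S^k)^{ii}=\sigma_{k-1}\Bigl[\bigl(\sigma_k/\sigma_{k-1}\bigr)^{ii}+\dfrac{\sigma_{k-1}^{ii}}{\sigma_{k-1}}\cdot\dfrac{Q_S^k}{\sigma_{k-1}}\Bigr]
\]
and appeals to the Guan--Lin observation that $(\sigma_k/\sigma_{k-1})^{ii}\ge 0$ in $\Gamma_{k-1}$ (via Newton's inequality), together with $\sigma_{k-1}^{ii}>0$ from the projection property.

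Your argument bypasses all of this with the single identity $Q_S^k(\la)=\sigma_k(\la,\al)$: convexity then follows because $\tilde\Gamma_k$ is an affine preimage of the convex cone $\Gamma_k^{(n+1)}$ intersected with $\Gamma_{k-1}^{(n)}$, and ellipticity because $\partial_{\la_i}Q_S^k(\la)=\sigma_{k-1}\bigl((\la,\al)\,|\,i\bigr)>0$ by the standard projection property $\Gamma_k\to\Gamma_{k-1}$. This is shorter, avoids polarization and case analysis entirely, and makes transparent that the sum-type operator is literally a $\sigma_k$ in one more variable---a viewpoint the paper only implicitly exploits later in Section~4. The paper's approach, on the other hand, stays within $\mathbb{R}^n$ and uses tools (polarization, the quotient $\sigma_k/\sigma_{k-1}$) that recur in the subsequent concavity analysis, so it is more self-contained with respect to the rest of the argument. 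The one input you flag---the projection property---is also used implicitly by the paper to get $\sigma_{k-1}^{ii}>0$, so there is no additional cost.
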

\begin{proof}
At first, let's prove the convexity of  $\tilde\Gamma_k$. Suppose $\la,\tilde{\la}\in\tilde\Gamma_k$ and  $0\leq t\leq 1$. We let
$$\la_t=t\la+(1-t)\tilde{\la}.$$ Then we have
\begin{eqnarray}\label{4.4}
&&Q_S^k(\la_t)\\
&=&\sigma_{k}(\la_t)+\al\sigma_{k-1}(\la_t)\nonumber\\
&=&t^{k}\sigma_{k}(\la)+(1-t)^{k}\sigma_{k}(\tilde\la)
+\sum_{l=1}^{k-1}a_lt^l(1-t)^{k-l}\sigma_{l,k-l}(\la,\tilde\la)\nonumber\\
&&+\al t^{k-1}\sigma_{k-1}(\la)+\al(1-t)^{k-1}\sigma_{k-1}(\tilde\la)
+\al\dsum_{s=1}^{k-2}b_st^s(1-t)^{k-1-s}\sigma_{s,k-1-s}(\la, \tilde\la)\nonumber\\
&\geq& t^{k-1}(\al
\sigma_{k-1}(\la)+t\sigma_{k}(\la))+(1-t)^{k-1}(\al\sigma_{k-1}(\tilde\la)+(1-t)\sigma_{k}(\tilde\la)).\nonumber
\end{eqnarray}
Here $a_l,b_l$ are two sequences of positive constants and $\sigma_{l,k-l},\sigma_{s,k-1-s}$ are the polarization of $\sigma_k$ and $\sigma_{k-1}$. Let's explain the last inequality. Since $\lambda,\tilde\la$ both are
in $\Gamma_{k-1}$,  by the definition of Garding's cone and
 Garding's inequality, we have
$$\sigma_{l,k-l}(\la,\tilde\la)\geq \sigma_l(\la)\sigma_{k-l}(\tilde{\la})>0,\sigma_{s,k-1-s}(\la,\tilde\la)\geq \sigma_s(\la)\sigma_{k-1-s}(\tilde\la)>0,$$ for $1\leq l\leq k-1$ and $1\leq s\leq k-2$.

Now, we can prove that $Q_S^k(\la_t)$ is nonnegative. In fact, we clearly have $\sigma_{k-1}(\la)>0, \sigma_{k-1}(\tilde\la)>0$. If $\sigma_k(\la)>0$, the first term of the last line in \eqref{4.4} is nonnegative. If $\sigma_k(\la)\leq 0$, we have
$$\al\sigma_{k-1}(\la)+t\sigma_{k}(\la)\geq \al\sigma_{k-1}(\la)+\sigma_{k}(\la)>0,$$ for $0\leq t\leq 1$. The second term of the last line in \eqref{4.4} can be discussed in the same way which implies $Q_S^k(\la_t)$ is nonnegative. Thus, in any case, we have the convexity.
\par
Then, we will prove the elliptic. P. Guan and C.-S. Lin \cite{LG} observed that
function $\dfrac{\sigma_{k}}{\sigma_{k-1}}$ is a degenerated elliptic function in
$\Gamma_{k-1}$ cone and it is only degenerate on the set $\{\sigma_k=0\}$(i.e.
it is strongly elliptic even in the set $\{\sigma_{k}<0\}\cap\Gamma_{k-1}$).
In fact, the degenerated elliptic property can be easily get by the following calculation. Since we have
\begin{align*}
\abc{\dfrac{\sigma_{k}}{\sigma_{k-1}}}^{ii}
&=\dfrac{\sigma_k^{ii}\sigma_{k-1}-\sigma_k\sigma_{k-1}^{ii}}{\sigma_{k-1}^2}\\
&=\dfrac{\sigma_{k-1}(\lambda|i)\sigma_{k-1}(\lambda|i)-\sigma_k(\lambda|i)\sigma_{k-2}(\lambda|i)}{\sigma_{k-1}^2},
\end{align*}
then, by Newton inequality, it is non negative. Furthermore, if
$\sigma_{k-1}(\lambda|i)\neq 0$, it is positive.
\par
Hence, for our equation \eqref{4.1}, direct calculation shows
\begin{align*}
(Q_S^k)^{ii}=&\sigma_{k}^{ii}+\al\sigma_{k-1}^{ii}=\sigma_{k-1}\abc{\dfrac{\sigma_{k}^{ii}}{\sigma_{k-1}}+\al
\dfrac{\sigma_{k-1}^{ii}}{\sigma_{k-1}}}\\
=&\sigma_{k-1}\abc{\abc{\dfrac{\sigma_{k}}{\sigma_{k-1}}}^{ii}+\dfrac{\sigma_{k-1}^{ii}}{\sigma_{k-1}}\abc{\al
+\dfrac{\sigma_{k}}{\sigma_{k-1}}}}\\
=&\sigma_{k-1}\abc{\abc{\dfrac{\sigma_{k}}{\sigma_{k-1}}}^{ii}+\dfrac{\sigma_{k-1}^{ii}}{\sigma_{k-1}}\dfrac{\psi}{\sigma_{k-1}}}>0,
\end{align*}
which gives  the elliptic in $\tilde\Gamma_k$.

\par

\end{proof}
\par
Before we discuss the quotient concavity, we need the following little Lemma.

\begin{lemm}\label{lem13}
$\dfrac{\sigma_k(\la)}{Q_S^k(\la)}$ is concave for any  $\la\in
\tilde\Gamma_k$.
\end{lemm}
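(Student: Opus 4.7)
The key observation is that $\sigma_k/Q_S^k$ is a one-variable function of the ratio $f := \sigma_k/\sigma_{k-1}$, so I would try to reduce the concavity claim to the well-known concavity of $f$ on $\Gamma_{k-1}$ composed with a scalar function. First I would rewrite
\[
\frac{\sigma_k(\lambda)}{Q_S^k(\lambda)} \;=\; 1 \;-\; \frac{\alpha\,\sigma_{k-1}(\lambda)}{\alpha\sigma_{k-1}(\lambda)+\sigma_k(\lambda)} \;=\; 1 \;-\; \frac{\alpha}{\alpha+f(\lambda)},
\]
where I have divided numerator and denominator by $\sigma_{k-1}(\lambda)>0$ (valid since $\lambda\in\tilde\Gamma_k\subset\Gamma_{k-1}$). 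Thus concavity of $\sigma_k/Q_S^k$ is equivalent to convexity of $\lambda\mapsto \alpha/(\alpha+f(\lambda))$ on $\tilde\Gamma_k$.

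Next I would record two standard ingredients. (i) On $\tilde\Gamma_k$ the denominator is positive: indeed $\alpha+f = Q_S^k/\sigma_{k-1} > 0$ by definition of $\tilde\Gamma_k$. (ii) The scalar function $g(t) := \alpha/(\alpha+t)$ is smooth, strictly decreasing, and strictly convex on $\{t: t+\alpha>0\}$, as one sees from $g'(t) = -\alpha/(\alpha+t)^2 < 0$ and $g''(t) = 2\alpha/(\alpha+t)^3 > 0$. (iii) The quotient $f(\lambda)=\sigma_k(\lambda)/\sigma_{k-1}(\lambda)$ is concave on $\Gamma_{k-1}$; this is the classical quotient concavity result (the same property used by Guan--Lin to establish degenerate ellipticity of $f$ on $\Gamma_{k-1}$), and $\tilde\Gamma_k\subset\Gamma_{k-1}$.

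Having these three ingredients, I would invoke the elementary composition rule: if $f$ is concave on a convex set and $g$ is convex and nonincreasing on an interval containing the range of $f$, then $g\circ f$ is convex. Applied to our situation, this yields convexity of $g(f(\lambda))=\alpha/(\alpha+f(\lambda))$ on $\tilde\Gamma_k$, which by the rewriting above gives concavity of $\sigma_k/Q_S^k$ on $\tilde\Gamma_k$. The case $\alpha=0$ is trivial since then $\sigma_k/Q_S^k\equiv 1$.

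There is essentially no obstacle once the composition structure is spotted; the only substantive input is the known concavity of $\sigma_k/\sigma_{k-1}$ on $\Gamma_{k-1}$, which is already invoked (for ellipticity) in the proof of Theorem~\ref{theo5}. The composition rule then does all the work, and convexity of $\tilde\Gamma_k$ (Theorem~\ref{theo5}) guarantees the chord $\theta\lambda+(1-\theta)\tilde\lambda$ stays in the domain where both factors are well-defined.
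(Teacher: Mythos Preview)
Your proof is correct and follows essentially the same route as the paper's own argument: both use the identity $\sigma_k/Q_S^k = 1 - \alpha\sigma_{k-1}/Q_S^k = 1 - \alpha/(\alpha + \sigma_k/\sigma_{k-1})$ together with the classical concavity of $\sigma_k/\sigma_{k-1}$ on $\Gamma_{k-1}$. The only difference is presentational: you spell out the composition rule ``$g$ convex decreasing, $f$ concave $\Rightarrow g\circ f$ convex'', whereas the paper simply notes that $Q_S^k/\sigma_{k-1}=\alpha+\sigma_k/\sigma_{k-1}$ is positive and concave and then (implicitly) uses that the reciprocal of a positive concave function is convex.
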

\begin{proof}
It is well know that function $\dfrac{\sigma_k(\la)}{\sigma_{k-1}(\la)}$
is concave for  $\la\in\Gamma_{k-1}\supseteq \tilde\Gamma_k $, which implies that
$$\al+\dfrac{\sigma_k(\la)}{\sigma_{k-1}(\la)}=\dfrac{\al\sigma_{k-1}(\la)+\sigma_k(\la)}{\sigma_{k-1}(\la)}$$
is also a concave function.  Thus, we obtain that function
$$\dfrac{\sigma_k(\la)}{Q_S^k(\la)}=1-\dfrac{\al\sigma_{k-1}(\la)}{\al\sigma_{k-1}(\la)+\sigma_k(\la)}$$
is  concave for $\la\in \tilde{\Gamma}_k$ and positive $\alpha$.
\end{proof}
Then, we have the following quotient concavity similar to Hessian equations. Our idea comes from \cite{HS, ML}.
\begin{lemm}\label{lem14}
\par
The quotient functions
$$q_k(\la)=\dfrac{Q_S^{k+1}(\la)}{Q_S^{k}(\la)}$$
are concave functions for $\la\in \Gamma_{k}$.
\end{lemm}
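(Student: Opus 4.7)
The plan is to lift everything into $\mathbb{R}^{n+1}$ via the identity
\[
Q_S^k(\lambda) \;=\; \alpha\sigma_{k-1}(\lambda)+\sigma_k(\lambda) \;=\; \sigma_k(\lambda_1,\ldots,\lambda_n,\alpha),
\]
which is immediate from the generating function factorization
\[
(1+\alpha t)\prod_{i=1}^{n}(1+\lambda_i t) \;=\; \sum_{j}\bigl(\sigma_j(\lambda)+\alpha\sigma_{j-1}(\lambda)\bigr)t^{j}.
\]
Thus $Q_S^k(\lambda)$ is literally the $k$-th elementary symmetric polynomial in the $(n+1)$-tuple $\mu=(\lambda,\alpha)$, obtained by appending $\alpha$ as an extra variable. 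Writing $\tilde\sigma_j$ for the $j$-th elementary symmetric polynomial in $n+1$ variables, the quotient becomes
\[
q_k(\lambda)\;=\;\frac{\tilde\sigma_{k+1}(\lambda,\alpha)}{\tilde\sigma_k(\lambda,\alpha)}.
\]

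Next I would invoke the classical concavity fact (the same fact already used in the proof of Lemma~\ref{lem13}): the quotient $\tilde\sigma_{k+1}/\tilde\sigma_k$ is a concave function on the Garding cone $\Gamma_k\subset\mathbb{R}^{n+1}$. It then suffices to check that the affine slice $\{\lambda\in\Gamma_k\}\times\{\alpha\}$ lies inside this larger cone. For any $1\le j\le k$,
\[
\tilde\sigma_j(\lambda,\alpha)\;=\;\sigma_j(\lambda)+\alpha\sigma_{j-1}(\lambda)\;>\;0,
\]
because $\lambda\in\Gamma_k$ makes both $\sigma_j(\lambda)$ and $\sigma_{j-1}(\lambda)$ strictly positive (with $\sigma_0=1$), and $\alpha\geq 0$. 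Hence $(\lambda,\alpha)\in\Gamma_k\subset\mathbb{R}^{n+1}$.

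To conclude, concavity is preserved under restriction to any affine subspace; so $q_k$, being the restriction of the concave function $\tilde\sigma_{k+1}/\tilde\sigma_k$ to the hyperplane $\{\mu_{n+1}=\alpha\}$, is concave in $\lambda\in\Gamma_k$. The only genuine content beyond the identity is the cone inclusion check, which is immediate; this is a much cleaner route than a direct Hessian computation for $q_k$. The real subtlety (and the reason one should prefer this dimensional-lift proof over direct differentiation) is avoiding the messy second-derivative bookkeeping for the combined polynomial $\alpha\sigma_{k-1}+\sigma_k$, which becomes transparent once it is recognized as a genuine elementary symmetric polynomial in one higher dimension.
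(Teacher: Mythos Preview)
Your proof is correct and takes a genuinely different, much shorter route than the paper's. The paper proves Lemma~\ref{lem14} by induction on $k$ in the style of Huisken--Sinestrari and Marcus--Lopes: it computes the second difference of $q_1$ by hand, then establishes the recursive identity
\[
(k+1)q_k(\lambda)-\frac{\alpha\sigma_k(\lambda)}{Q_S^k(\lambda)}=\sum_i\Big[\lambda_i-\frac{\lambda_i^2}{\lambda_i+q_{k-1;i}(\lambda)}\Big],
\]
and combines it with Lemma~\ref{lem13} to pass from the concavity of $q_{k-1}$ to that of $q_k$. Your dimensional lift $Q_S^k(\lambda)=\tilde\sigma_k(\lambda,\alpha)$ bypasses all of this, reducing immediately to the classical concavity of $\tilde\sigma_{k+1}/\tilde\sigma_k$ on $\Gamma_k\subset\mathbb{R}^{n+1}$ together with the elementary inclusion $\Gamma_k^{(n)}\times\{\alpha\}\subset\Gamma_k^{(n+1)}$.

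It is worth noting that the paper does eventually revisit the sum type via a related lift in Section~4 (the Corollary following Lemma~\ref{lem20}), but that argument only yields concavity on $\Gamma_{k+1}$, and the authors explicitly remark that Lemma~\ref{lem14} is sharper. Your version of the lift recovers the full $\Gamma_k$ range because you append $\alpha$ as a genuine extra coordinate rather than passing through the differential-operator framework. The tradeoff is that your argument consumes the concavity of $\sigma_{k+1}/\sigma_k$ on $\Gamma_k$ as a black box, whereas the paper's induction is self-contained and in fact reproves that classical statement when $\alpha=0$.
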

\begin{proof}
We use induction to prove it. Let's consider $q_1$ at first. For all $\la\in\Gamma_1$ and $\xi\in R^n$, we have
\begin{eqnarray}
&&2q_1(\la)-q_1(\la+\xi)-q_1(\la-\xi)\label{e2} \\
&=&\frac{2\al^2\sigma_1^2(\xi)+2\sigma_1(\xi)[\al+\sigma_1(\la)]\sum_{p\neq
q}\la_p\xi_q-2\sigma_2(\xi)[\al+\sigma_1(\la)]^2-2\sigma_2(\la)\sigma_1^2(\xi)}{[\al+\sigma_1(\la)][\al+\sigma_1(\la+\xi)][\al+\sigma_1(\la-\xi)]}.\nonumber
\end{eqnarray}
Note that
\begin{eqnarray}
&&2\sigma_1(\xi)[\al+\sigma_1(\la)]\sum_{p\neq q}\la_p\xi_q \label{e3}\\
&=&2\sigma_1(\xi)[\al+\sigma_1(\la)]\sum_{p}\la_p(\sigma_1(\xi)-\xi_p) \nonumber\\
&=&\sigma_1^2(\xi)[\al+\sigma_1(\la)]^2+\sigma_1^2(\xi)[\sigma_1^2(\la)-\al^2]-2\sigma_1(\xi)[\al+\sigma_1(\la)]\sum_{p}\la_p\xi_p.
\nonumber
\end{eqnarray}
Combining (\ref{e3}) and (\ref{e2}), we get
\begin{eqnarray}
2q_1(\la)-q_1(\la+\xi)-q_1(\la-\xi)
=\frac{\al^2\sigma_1^2(\xi)+\dsum_i[(\al+\sigma_1(\la))\xi_i-\sigma_1(\xi)\la_i]^2}{[\al+\sigma_1(\la)][\al+\sigma_1(\la+\xi)][\al+\sigma_1(\la-\xi)]}\nonumber
\end{eqnarray}
Using the above equality, for all $\la\in\Gamma_1$ and $\xi\in R^n$, we obtain
\begin{align*}
-\dfrac{\pa^2 q_1}{\pa \xi^2}(\la)&=\dlim_{\varepsilon\ra
0}\dfrac{2q_1(\la)-q_1(\la+\varepsilon\xi)-q_1(\la-\varepsilon\xi)}{\varepsilon^2}\\
&=\dfrac{\al^2\sigma_1^2(\xi)+\dsum_i[(\al+\sigma_1(\la))\xi_i-\sigma_1(\xi)\la_i]^2}{[\al+\sigma_1(\la)]^3}.
\end{align*}
Thus, the right-hand side of the last line is positive which implies that $q_1$ is a concave function.
\par
For arbitrary $k$, if we assume that
$q_{k-1}$ is concave, we will prove that $q_{k}$ is also concave.
Using
$$\sum_i\la_i\sigma_{k}(\la |i)=(k+1)\sigma_{k+1}\text{ and }
\sigma_{k}(\la |i)=\sigma_{k}-\la_i\sigma_{k-1}(\la  | i),$$ we have
\begin{align}\label{e5}
(k+1)q_{k}(\la)-\frac{\al\sigma_{k}(\la)}{Q_S^k(\la)}&=\sum_i\la_i\frac{Q_S^k(\la |i)}{Q_S^k(\la)}\\
&=\sum_i\la_i\frac{Q_S^{k}(\la)-\la_iQ_S^{k-1}(\la |i)}{Q_S^k(\la)}\nonumber\\
&=\sum_i[\la_i-\frac{\la_i^2}{\la_i+q_{k-1;i}(\la)}].\nonumber
\end{align}
Here the notation $q_{k-1:i}(\la)$ means $q_{k-1}(\la |i)$, namely, excluded $i$ in the indices set $1,\cdots, n$. Take sufficiently small $\xi\in R^n$ satisfying  $\la\pm\xi\in\Gamma_k$. By
(\ref{e5}), we get
\begin{align}
&(k+1)[2q_k(\la)-q_k(\la+\xi)-q_k(\la-\xi)]-\alpha[\frac{2\sigma_k}{Q_S^k}(\la)-\frac{\sigma_k}{Q_S^k}(\la+\xi)-\frac{\sigma_k}{Q_S^k}(\la-\xi)]\nonumber\\
=&\sum_i\left(\frac{(\la_i+\xi_i)^2}{\la_i+\xi_i+q_{k-1;i}(\la+\xi)}+\frac{(\la_i-\xi_i)^2}{\la_i-\xi_i+q_{k-1;i}(\la-\xi)}\right.\nonumber\\
&\left.-\frac{(2\la_i)^2}{2\la_i+q_{k-1;i}(\la+\xi)+q_{k-1;i}(\la-\xi)}\right)\nonumber\\
&+\sum_i\abc{\frac{(2\la_i)^2}{2\la_i+q_{k-1;i}(\la+\xi)+q_{k-1;i}(\la-\xi)}-\dfrac{2\la_i^2}{\la_i+q_{k-1;i}(\la)}}\nonumber\\
=&\sum_i\frac{[(\la_i+\xi_i)q_{k-1;i}(\la-\xi)-(\la_i-\xi_i)q_{k-1;i}(\la+\xi)]^2}{[\la_i+\xi_i+q_{k-1;i}(\la+\xi)][\la_i-\xi_i+q_{k-1;i}(\la-\xi)][2\la_i+q_{k-1;i}(\la+\xi)+q_{k-1;i}(\la-\xi)]}\nonumber\\
&+2\sum_i\la_i^2\dfrac{2q_{k-1;i}(\la)-q_{k-1;i}(\la+\xi)-q_{k-1;i}(\la-\xi)}{[\la_i+q_{k-1;i}(\la)][2\la_i+q_{k-1;i}(\la+\xi)+q_{k-1;i}(\la-\xi)]}.\nonumber
\end{align}
Using Lemma \ref{lem13}, we have
$$
\dfrac{2\sigma_k}{Q_S^k}(\la)-\dfrac{\sigma_k}{Q_S^k}(\la+\xi)-\dfrac{\sigma_k}{Q_S^k}(\la-\xi)\geq
0.
$$
Thus, combining the previous two formulas, we obtain
\begin{align*}
&(k+1)[2q_k(\la)-q_k(\la+\xi)-q_k(\la-\xi)]\\
\geq&2\dsum_i\la_i^2\dfrac{2q_{k-1;i}(\la)-q_{k-1;i}(\la+\xi)-q_{k-1;i}(\la-\xi)}{[\la_i+q_{k-1;i}(\la)][2\la_i+q_{k-1;i}(\la+\xi)+q_{k-1;i}(\la-\xi)]},
\end{align*}
which implies that, for any $\xi$, we have
\begin{align*}
-\dfrac{\pa^2q_k}{\pa\xi^2}(\la)=&\dlim_{\varepsilon\ra
0}\dfrac{2q_k(\la)-q_k(\la+\varepsilon\xi)-q_k(\la-\varepsilon\xi)}{\varepsilon^2}\\
\geq&\dlim_{\varepsilon\ra
0}\dsum_i\dfrac{2\la_i^2}{k+1}\dfrac{2q_{k-1;i}(\la)-q_{k-1;i}(\la+\varepsilon\xi)-q_{k-1;i}(\la-\varepsilon\xi)}{\varepsilon^2[\la_i+q_{k-1;i}(\la)][2\la_i+q_{k-1;i}(\la+\varepsilon\xi)+q_{k-1;i}(\la-\varepsilon\xi)]}\\
=&-\dsum_i\dfrac{\la_i^2}{k+1}\dfrac{\dfrac{\pa^2q_{k-1;i}}{\pa\xi^2}(\la)}{[\la_i+q_{k-1;i}(\la)]^2}\\
=&-\dsum_i\dfrac{}{}\dfrac{\la_i^2\dfrac{\pa^2q_{k-1}}{\pa[\xi]_i^2}([\la]_i)}{(k+1)[\la_i+q_{k-1;i}(\la)]^2}\geq
0.
\end{align*}
Here $[\xi]_i,[\la]_i$ denote the vectors $\xi$ and $\la$ of which
the $i$-th component is vanish. We obtain the concavity for $q_k$.
\end{proof}
A direct corollary of the above theorem is that
\begin{coro}\label{coro15}
For any $1\leq l<k\leq n$, the two functions
$$\left(\frac{Q_S^k}{Q_S^l}(\la)\right)^{\frac{1}{k-l}}, \text{ and } (Q_S^k(\la))^{\frac{1}{k}}$$
are concave functions in the admissable cone $\tilde\Gamma_k$.
\end{coro}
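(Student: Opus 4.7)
The plan is to derive both concavity statements from Lemma \ref{lem14} via a telescoping identity combined with the standard fact that the geometric mean of positive concave functions is concave.

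First I would write, for $1 \leq l < k$, the telescoping factorization
\begin{equation*}
\dfrac{Q_S^k(\la)}{Q_S^l(\la)} \;=\; \prod_{m=l}^{k-1} \dfrac{Q_S^{m+1}(\la)}{Q_S^m(\la)} \;=\; \prod_{m=l}^{k-1} q_m(\la).
\end{equation*}
By Lemma \ref{lem14}, each $q_m$ is concave on $\Gamma_m$. Since the cone inclusions $\tilde\Gamma_k \subseteq \Gamma_{k-1} \subseteq \Gamma_m$ hold for every $m \leq k-1$, each factor $q_m$ is defined, strictly positive (note $Q_S^m = \al\sigma_{m-1} + \sigma_m > 0$ on $\Gamma_{k-1}$ for $m \leq k-1$, and $Q_S^k > 0$ by definition of $\tilde\Gamma_k$), and concave on $\tilde\Gamma_k$.

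Next I would invoke the elementary fact that a weighted geometric mean of positive concave functions is concave: if $f, g > 0$ are concave and $\al, \beta > 0$ with $\al + \beta = 1$, then $f^\al g^\beta$ is concave, because for $\la_0 = t\la_1 + (1-t)\la_2$ one has, by concavity of $f,g$ and then Hölder's inequality,
\begin{equation*}
f(\la_0)^\al g(\la_0)^\beta \geq \bigl(tf(\la_1)+(1-t)f(\la_2)\bigr)^\al \bigl(tg(\la_1)+(1-t)g(\la_2)\bigr)^\beta \geq t f(\la_1)^\al g(\la_1)^\beta + (1-t) f(\la_2)^\al g(\la_2)^\beta.
\end{equation*}
Iterating this two-function case, the $N$-term geometric mean $(f_1 \cdots f_N)^{1/N}$ is concave whenever each $f_i > 0$ is concave.

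Applying this to the $N = k-l$ concave positive functions $q_l, q_{l+1}, \ldots, q_{k-1}$ on $\tilde\Gamma_k$ yields that $(Q_S^k/Q_S^l)^{1/(k-l)} = \bigl(\prod_{m=l}^{k-1} q_m\bigr)^{1/(k-l)}$ is concave. For the second statement, I would observe that $Q_S^1 = \al + \sigma_1$ is affine (hence concave) and write
\begin{equation*}
Q_S^k(\la) \;=\; Q_S^1(\la) \cdot \prod_{m=1}^{k-1} q_m(\la),
\end{equation*}
so $(Q_S^k)^{1/k}$ is the geometric mean of $k$ positive concave functions on $\tilde\Gamma_k$, and the same lemma concludes the proof. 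There is no real obstacle in this argument; the only step requiring care is verifying the cone inclusions so that all $q_m$ are simultaneously positive and concave on the admissible cone $\tilde\Gamma_k$, which is immediate from $\tilde\Gamma_k \subseteq \Gamma_{k-1}$.
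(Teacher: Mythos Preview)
Your proposal is correct and follows essentially the same approach as the paper: telescoping $Q_S^k/Q_S^l$ into a product of the $q_m$'s, invoking Lemma~\ref{lem14} for each factor, and then applying the fact that the geometric mean of positive concave functions is concave. You supply more detail than the paper does---explicitly checking the cone inclusions $\tilde\Gamma_k\subseteq\Gamma_{k-1}\subseteq\Gamma_m$, verifying positivity of each $q_m$, and sketching the H\"older argument for the geometric-mean lemma---but the underlying strategy is identical.
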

\begin{proof}
By the definition of $q_k$ in the previous Lemma, we observe that
$$\left(\frac{Q_S^k}{Q_S^l}\right)^{\frac{1}{k-l}}=(q_{k-1}q_{k-2}\cdots q_l)^{\frac{1}{k-l}},$$ and
$$(Q_S^k)^{\frac{1}{k}}=(q_{k-1}q_{k-2}\cdots q_1 Q_S^1)^{\frac{1}{k}}.$$
Since, again by the previous Lemma, $q_{k-1},\cdots, q_1$ and $Q_S^1$ are all concave functions,
our corollary comes from some well known fact that the geometric mean of the finite positive concave functions is also a concave function.
\end{proof}
The above quotient concavity directly leads to Theorem \ref{theosum}.

\section{Discussion of quotient concavity }
In section 2, we see that Condition (Q) is the key for the curvature estimates. The previous section gives the quotient concavity for sum type. In this section, we try to study the quotient concavity for the general type, namely, linear combination of $\sigma_m$.

For any $x\in \mathbb{R}^n$, we let $\bar\alpha_{k-m}=\alpha_m$ in \eqref{QSS}.
Without loss of generality, we assume that $\alpha_k=\bar{\alpha}_0=1$.
Let $\theta=(1,1,1,\cdots,1)$. For any $t\in \mathbb{R}$ we have
$$\sigma_m(t\theta+x)=\frac{(n-k)!}{(n-m)!}\frac{d^{k-m}}{d t^{k-m}}\sigma_k(t\theta+x).$$ Therefore, by \eqref{QSS}, we have
$$Q(t\theta+x)=\sum_{m=0}^k\frac{(n-k)!\bar\alpha_{m}}{(n-k+m)!}\frac{d^m}{dt^m}\sigma_k(t\theta+x).$$
Now by Condition (C), for $\bar\alpha_m$, there is some  $b\in \mathbb{R}^N$ and $N\geq k$ such that for $m=0,1,\cdots, n$, we have
\begin{eqnarray}\label{C}
\alpha_m'=\frac{(n-k)!\bar\alpha_{m}}{(n-k+m)!}=\sigma_m(b).
\end{eqnarray}
 With our condition, we can rewrite that
$$Q(t\theta+x)=\prod_{m=1}^N(1+b_m\frac{d}{dt})\sigma_k(t\theta+x),$$
where $b_1,\cdots, b_N$ are the component of vector $b$.\begin{lemm}\label{lem} For any $x\in \mathbb{R}^n$, the order $k$ polynomial $Q(t\theta+x)$ always has $k$ real roots.
\end{lemm}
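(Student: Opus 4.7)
The plan is to exploit the factorization
$$Q(t\theta+x) = \prod_{m=1}^N\Bigl(1 + b_m \frac{d}{dt}\Bigr)\sigma_k(t\theta+x)$$
just derived, and to show both that the base polynomial $\sigma_k(t\theta+x)$ is real-rooted and that each factor $(1+b_m\frac{d}{dt})$ preserves real-rootedness. First I would establish the identity
$$\sigma_k(t\theta+x) = \frac{1}{(n-k)!}\frac{d^{n-k}}{dt^{n-k}}\prod_{i=1}^n(t+x_i),$$
which follows by expanding $\prod_i(t+x_i)$ in powers of $t$ and comparing with the generating-function formula $\sigma_k(t\theta+x) = \sum_{j=0}^k \binom{n-k+j}{j}\sigma_{k-j}(x)t^j$. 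Since $\prod_i(t+x_i)$ has the $n$ real roots $-x_1,\dots,-x_n$, applying Rolle's theorem $(n-k)$ times produces a degree-$k$ polynomial with $k$ real roots. This is just the Garding hyperbolicity of $\sigma_k$ in the direction $\theta$ and gives the starting point of the iteration.

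Next I would prove the following classical Laguerre-type stability lemma: if $p(t)$ is a real polynomial of degree $d\geq 1$ with only real roots and $b\in\mathbb R$, then $(1+bD_t)p = p+bp'$ has degree $d$ and only real roots. The degree claim is immediate because the operator preserves the leading coefficient. For the root claim the case $b=0$ is trivial; if $b\neq 0$, form $q(t) = e^{t/b}p(t)$, whose real zeros agree with those of $p$ (with the same multiplicities) and which satisfies $p(t)+bp'(t) = b\,e^{-t/b}q'(t)$. Thus real zeros of $p+bp'$ are exactly real zeros of $q'$. Let $s_1<\dots<s_l$ be the distinct roots of $p$ with multiplicities $m_1,\dots,m_l$; they contribute $\sum(m_i-1) = d-l$ zeros of $q'$, Rolle between consecutive $s_i$ contributes $l-1$, and because $e^{t/b}$ tends to $0$ at one of $\pm\infty$ the function $q$ vanishes at that infinity as well as at the adjacent extreme root, producing one further critical point in the unbounded interval. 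This totals $d$ real zeros of $q'$, matching the degree of $p+bp'$, so every root is real.

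Finally I combine the two ingredients: set $p_0(t) = \sigma_k(t\theta+x)$, which by Step 1 has degree $k$ with $k$ real roots. Inductively define $p_j = (1+b_j D_t)p_{j-1}$; by Step 2 each $p_j$ still has degree $k$ and $k$ real roots. Since operators $D_t^j$ with $j>k$ annihilate any degree-$k$ polynomial, the full product $\prod_{m=1}^N(1+b_m D_t)$ acts on $\sigma_k(t\theta+x)$ as $\sum_{j=0}^k \sigma_j(b)D_t^j = \sum_{j=0}^k \alpha_j' D_t^j$, which is exactly the operator computing $Q(t\theta+x)$. Therefore $p_N(t) = Q(t\theta+x)$ has $k$ real roots, proving the lemma. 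The only step requiring care is the counting at infinity in the Laguerre argument; the rest is bookkeeping and a combinatorial identity.
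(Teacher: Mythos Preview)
Your proof is correct and follows essentially the same route as the paper: both use the exponential-multiplier identity $(1+b\,d/dt)p = b\,e^{-t/b}\frac{d}{dt}(e^{t/b}p)$ together with Rolle's theorem and the ``root at infinity'' coming from the decay of $e^{t/b}$ at one end. Your version is simply more detailed --- you spell out the real-rootedness of $\sigma_k(t\theta+x)$ and the multiplicity bookkeeping in the Laguerre step, whereas the paper compresses the whole argument into a couple of lines.
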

\begin{proof}
By induction, we only need to prove that $$(1+b_m\frac{d}{dt})\sigma_k(t\theta+x)$$ has $k$ real roots. For $b_m\neq 0$,  this comes from
$$(1+b_m\frac{d}{dt})\sigma_k(t\theta+x)=b_me^{-\frac{t}{b_m}}\frac{d}{dt} (e^{\frac{t}{b_m}}\sigma_{k}(t\theta+x)),$$ and generalized Roll's theorem. In fact, for $b_m>(<)0$, $e^{\frac{t}{b_m }}\sigma_{k}(t\theta+x)$
has $k+1$ roots, since, $\sigma_k(t\theta+x)$ has $k$ roots and $-\infty(+\infty)$ is also a root.
For $b_m=0$, it is obvious.
\end{proof}

Hence we have the following Proposition.
\begin{prop}\label{prop2}
For $a\in \mathbb{R}^n$ and $\sigma_k(a)\neq 0$,  for any $x\in \mathbb{R}^n$, we can write that
$$Q(at+x)=\sigma_k(a)\prod_{m=1}^k(t+\lambda_m(x,a)).$$ Namely, $Q(at+x)$ has $k$ real roots $-\la_1(x,a),\cdots,-\la_n(x,a)$.
\end{prop}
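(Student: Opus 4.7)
The plan is to deduce the proposition from Lemma~\ref{lem}, which treats only the direction $a=\theta$, by invoking the classical Garding theory of hyperbolic polynomials after homogenizing $Q$ so that it becomes a bona fide homogeneous polynomial to which Garding's theorem can be applied.

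First I would homogenize by introducing $P(y,s):=\sum_{m=0}^{k}\alpha_m s^{k-m}\sigma_m(y)$, a polynomial in $n+1$ variables, homogeneous of total degree $k$, with $Q(y)=P(y,1)$. For any fixed $s_0\in\mathbb{R}$ the slice $P(\,\cdot\,,s_0)$ is the combination $\sum_m\tilde\alpha_m\sigma_m$ with $\tilde\alpha_m:=\alpha_m s_0^{k-m}$. A direct computation shows that the Condition (C) vector associated with the rescaled coefficients $\tilde\alpha$ is simply $s_0 b$, where $b$ is the original vector. Since the proof of Lemma~\ref{lem} uses only the reality of the $b_m$'s, via the identity $(1+b_m d/dt)\sigma_k(t\theta+x)=b_m e^{-t/b_m}\frac{d}{dt}(e^{t/b_m}\sigma_k(t\theta+x))$ and Rolle's theorem, and never the signs of the coefficients, the Lemma applies verbatim and yields that $t\mapsto P(t\theta+x,s_0)$ has $k$ real roots for every $x\in\mathbb{R}^n$ and every $s_0\in\mathbb{R}$. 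In Garding's language this says exactly that $P$ is hyperbolic in the direction $(\theta,0)\in\mathbb{R}^{n+1}$.

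Second I would invoke Garding's theorem. Since $P$ is homogeneous of degree $k$ and hyperbolic in direction $(\theta,0)$, the connected component $\Lambda$ of $(\theta,0)$ in $\{(y,s):P(y,s)>0\}$ is an open convex cone, and $P$ is hyperbolic in every direction belonging to $\Lambda$. Because $P(a,0)=\alpha_k\sigma_k(a)=\sigma_k(a)$, the slice $\Lambda\cap(\mathbb{R}^n\times\{0\})$ coincides with the connected component of $\theta$ in $\{y\in\mathbb{R}^n:\sigma_k(y)>0\}$, and in particular contains the Garding cone $\Gamma_k$. For any such $a$, evaluating hyperbolicity of $P$ in direction $(a,0)$ at the base point $(x,1)\in\mathbb{R}^{n+1}$ gives that $t\mapsto P(ta+x,1)=Q(at+x)$ has $k$ real roots, say $-\lambda_1(x,a),\ldots,-\lambda_k(x,a)$. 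The leading coefficient of this degree-$k$ polynomial in $t$ is $\sigma_k(a)\neq 0$, so we obtain the desired factorization $Q(at+x)=\sigma_k(a)\prod_{m=1}^{k}(t+\lambda_m(x,a))$ with real $\lambda_m(x,a)$.

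The main obstacle is the coefficient-rescaling at the beginning. One has to verify that Condition (C) survives the substitution $\alpha_m\mapsto\alpha_m s_0^{k-m}$ (routine: the new vector is $s_0 b$) and, more delicately, that the proof of Lemma~\ref{lem} continues to function even when $s_0<0$ forces some of the $\tilde\alpha_m$ to be negative. A careful re-reading of that proof confirms this causes no problem, because the argument is a purely algebraic differential-operator factorization combined with generalized Rolle's theorem and never uses the signs of the coefficients. After that, the remainder is a standard application of the Garding theory of hyperbolic polynomials.
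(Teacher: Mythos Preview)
Your argument takes a genuinely different route from the paper's. The paper follows Harvey--Lawson \cite{HL}: it tracks the eigenvalue functions $\lambda_m(at+x)$ defined as the negated roots of $s\mapsto Q(s\theta+at+x)$, proves the rescaling identity $\lambda_m(at+x)=t\,\lambda_m\bigl(a+x/t,\bar\alpha_1/t,\dots,\bar\alpha_k/t^k\bigr)$, lets $t\to\pm\infty$ so that each $\lambda_m(at+x)$ is asymptotic to $t\cdot\lambda_m(a,0,\dots,0)$, and then uses an intermediate-value argument on these branches to produce $k$ real zeros of $Q(at+x)$. Your approach instead homogenizes to $P(y,s)=\sum_m\alpha_ms^{k-m}\sigma_m(y)$, verifies hyperbolicity of $P$ in the single direction $(\theta,0)$ by applying Lemma~\ref{lem} slice by slice, and then invokes G{\aa}rding's cone theorem.

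There is, however, a genuine gap at your final step. G{\aa}rding's theorem only guarantees that $P$ is hyperbolic in directions belonging to the G{\aa}rding cone $\Lambda=\Gamma\bigl(P,(\theta,0)\bigr)$, and you yourself identify the slice $\Lambda\cap(\mathbb{R}^n\times\{0\})$ with $\Gamma_k\times\{0\}$. But the Proposition is stated for \emph{every} $a$ with $\sigma_k(a)\neq 0$, and the set $\{\sigma_k\neq 0\}$ is strictly larger than $\Gamma_k\cup(-\Gamma_k)$: already for $k=n=2$ the vector $a=(1,-1)$ has $\sigma_2(a)=-1\neq 0$ while $\sigma_1(\pm a)=0$, so $\pm a\notin\Gamma_2$. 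For such $a$ the point $(a,0)$ lies outside $\pm\Lambda$ and G{\aa}rding's theorem says nothing about hyperbolicity in that direction; your phrase ``for any such $a$'' therefore covers only $a\in\Gamma_k$, not the full range the Proposition claims. The paper's eigenvalue-tracking argument is designed precisely to handle an arbitrary direction $a$ with $\sigma_k(a)\neq 0$, without any cone restriction, which is why it does not pass through G{\aa}rding's theorem at all.
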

\begin{proof}
The proof is following the paper \cite{HL}. We consider the following polynomial, for $s\in \mathbb{R}$,
$Q(s\theta+at+x)$. By Lemma \ref{lem}, we have $$Q(s\theta+at+x)=\prod_{m=1}^k(s+\lambda_m(at+x)).$$
Here the $\la_m(at+x)$ in fact depends on $\bar\alpha_0,\cdots,\bar\alpha_{k}$, but we will always use notion $\la_m(at+x)$ to present $\lambda_m(at+x,\bar\alpha_1,\cdots,\bar\alpha_k)$ without ambiguity.
We call the sequence  $\lambda_1(at+x), \lambda_2(at+x), \cdots, \lambda_k(at+x)$ the sign ordered eigenvalues, if  for $t>0$, we have $\la_1\leq\la_2\leq \cdots\leq \la_k$  and for $t<0$, we have $\la_1\geq\la_2\geq \cdots\geq \la_k$. We call the sequence $\la_1,\la_2\cdots,\la_k$ the ordered eigenvalues if we require $\la_1\leq \la_2\leq\cdots\leq \la_k$. Obviously, the sign ordered eigenvalues are ordered eigenvalues for $t>0$.

We know that $\lambda_m$ is continuous. By the same argument
as Lemma 2.10 in \cite{HL}, we also can prove that $\lambda_m(at+x)$ is real analytic for variable $t$. Now, let's follow the argument of Theorem 2.9 in \cite{HL}.  For $t\neq 0$, we have
\begin{eqnarray}\label{1}
Q(s\theta+at+x)&=&\sum_{m=0}^k\bar\alpha_{k-m}\sigma_m(s\theta+at+x)\\
&=&t^k\sum_{m=0}^k\frac{\bar\alpha_{k-m}}{t^{k-m}}\sigma_m(\frac{s}{t}\theta+a+\frac{x}{t}).\nonumber
\end{eqnarray}
It is obvious that
$$\frac{\bar\alpha_{k-m}}{t^{k-m}}=\frac{(n-m)!}{(n-k)!}\sigma_{k-m}(\frac{b}{t}).$$
Hence, by Lemma \ref{lem} and \eqref{1}, we have
$$Q(s\theta+at+x)=t^k\prod_{m=1}^k(\frac{s}{t}+\lambda_m(a+\frac{x}{t}, \frac{\bar\alpha_1}{t}, \frac{\bar\alpha_2}{t^2},\cdots,\frac{\bar\alpha_k}{t^k})).$$
Here $\la_1\leq \la_2\leq\cdots\leq \la_k$ are ordered eigenvalues.
Hence, we obtain, for $m=1,\cdots,n$,
\begin{eqnarray}\label{2}
\lambda_m(at+x,\bar\alpha_1,\bar\alpha_2,\cdots,\bar\alpha_k)=t\lambda_m(a+\frac{x}{t}, \frac{\bar\alpha_1}{t}, \frac{\bar\alpha_2}{t^2},\cdots,\frac{\bar\alpha_k}{t^k})).
\end{eqnarray}
By the continuity of $\lambda_m$, for $t\rightarrow \pm\infty$, we have
$$\lambda_m(a+\frac{x}{t}, \frac{\bar\alpha_1}{t}, \frac{\bar\alpha_2}{t^2},\cdots,\frac{\bar\alpha_k}{t^k})\rightarrow \lambda_m(a,0,0,\cdots, 0).$$
It is clear that
$$\sigma_k(a)=\prod_{m=1}^k\lambda_m(a,0,0,\cdots,0)\neq 0.$$
Since  $ \lambda_m(a, 0,\cdots,$ $0)\neq 0$, then, for $t\rightarrow \pm\infty$, we have  $$\lambda_m(at+x)\rightarrow \pm(\mp)\infty.$$ By continuity, $\lambda_m(at+x)$   always has one real solution $t_m$, namely, $\lambda_m(at_m+x)=0$. Define some set $S=\{t_1,t_2,\cdots,t_k\}$.
We claim that if $t_{m_1}=t_{m_2}=\cdots=t_{m_l}=\tau$ in $S,$ then$$\text{  $\tau$ is  at least $l$ multiple root of the polynomial $Q(at+x)$}.$$
It is obvious that $$Q(at+x)=\prod_{m=1}^k\lambda_m(at+x).$$ We know that $Q(a\tau+x)=0$.
Since $\lambda_m(at+x)$ is real analytic for $t$, then
$$\frac{d^{\alpha}Q}{dt^{\alpha}}(a\tau+x)=0,$$ for $0\leq \alpha\leq l-1$. The claim is proved. By the claim, we know that $t_1,\cdots, t_k$ are exactly $k$ real roots for polynomial $Q(at+x)$.
\end{proof}

Now we can obtain some concavity.
\begin{lemm}\label{lemma} If for any $x,y\in\mathbb{R}^n$ and $\sigma_k(x)\neq 0$, $Q(xt+y)$ has $k$ real roots, then,
$(Q(x))^{1/k}$ is a concave function in $x\in \Gamma_k$. Thus, if condition (C) holds, for $Q(x)$ defined by \eqref{QSS}, then $(Q(x))^{1/k}$ is a concave function in $\Gamma_k$.
\end{lemm}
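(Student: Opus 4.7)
The plan is to reduce the concavity of $Q^{1/k}$ on $\Gamma_k$ to the classical Newton--Maclaurin inequality, via a direct Hessian computation combined with the factorization supplied by Proposition \ref{prop2}.

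First, since $\alpha_k=1$, all $\alpha_m\geq 0$, and $\sigma_m>0$ on $\Gamma_k$ for $1\leq m\leq k$, one has $Q>0$ throughout $\Gamma_k$, so $f:=Q^{1/k}$ is smooth there. A routine computation of the Hessian of a power of a positive function yields
\[
v^{T}\nabla^{2}f(x)\,v \;=\; \frac{f(x)}{k^{2}\,Q(x)^{2}}\Bigl[\,k\,Q(x)\,v^{T}\nabla^{2}Q(x)\,v\;-\;(k-1)\bigl(\nabla Q(x)\cdot v\bigr)^{2}\Bigr].
\]
Consequently, concavity of $Q^{1/k}$ on $\Gamma_k$ is equivalent to the pointwise inequality
\[
k\,Q(x)\,v^{T}\nabla^{2}Q(x)\,v\;\leq\;(k-1)\bigl(\nabla Q(x)\cdot v\bigr)^{2}\qquad\text{for all }x\in\Gamma_k,\ v\in\mathbb{R}^{n}.
\]

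Next, for any direction $v$ with $\sigma_k(v)\neq 0$, Proposition \ref{prop2} (applied with $a=v$) gives
\[
Q(x+tv)\;=\;\sigma_k(v)\prod_{i=1}^{k}(t+\lambda_i)\;=\;\sigma_k(v)\sum_{m=0}^{k} t^{\,k-m}\,e_m(\lambda),
\]
with $\lambda=(\lambda_1,\ldots,\lambda_k)\in\mathbb{R}^{k}$ and $e_m$ the $m$-th elementary symmetric polynomial. Identifying the Taylor coefficients of $Q(x+tv)$ at $t=0$ up to order two,
\[
Q(x)=\sigma_k(v)\,e_k(\lambda),\qquad \nabla Q(x)\cdot v=\sigma_k(v)\,e_{k-1}(\lambda),\qquad v^{T}\nabla^{2}Q(x)\,v=2\sigma_k(v)\,e_{k-2}(\lambda).
\]
Dividing the target inequality by $\sigma_k(v)^{2}>0$ reduces it to
\[
2k\,e_{k}(\lambda)\,e_{k-2}(\lambda)\;\leq\;(k-1)\,e_{k-1}(\lambda)^{2}.
\]

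This is exactly the Newton--Maclaurin inequality $p_{k-1}^{2}\geq p_{k-2}\,p_{k}$ for the normalized symmetric functions $p_m=e_m/\binom{k}{m}$, written out using $\binom{k}{k-1}=k$, $\binom{k}{k-2}=k(k-1)/2$, $\binom{k}{k}=1$. Crucially, Newton--Maclaurin requires only that the $\lambda_i$ be \emph{real} (not positive), and reality is precisely what the hypothesis (equivalently, Proposition \ref{prop2}) provides. Finally, since $k\,Q(x)\,v^{T}\nabla^{2}Q(x)\,v-(k-1)(\nabla Q(x)\cdot v)^{2}$ is a polynomial in $v$ and we have shown it is $\leq 0$ on the open dense set $\{v\in\mathbb{R}^n:\sigma_k(v)\neq 0\}$, it is $\leq 0$ for every $v\in\mathbb{R}^{n}$; hence $\nabla^{2}Q^{1/k}\leq 0$ on $\Gamma_k$ and concavity follows. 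The conceptual core of the argument is recognizing the Hessian inequality for $Q^{1/k}$ as a single instance of Newton--Maclaurin applied to the real roots produced by the hypothesis; everything else is bookkeeping.
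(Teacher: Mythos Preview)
Your proof is correct and is essentially the same argument as the paper's: factor $Q$ along a line using the real-root hypothesis (Proposition~\ref{prop2}), then reduce the second-derivative inequality for $Q^{1/k}$ to the Newton--Maclaurin inequality $2k\,e_k e_{k-2}\le (k-1)\,e_{k-1}^2$ on the real roots, handling the degenerate direction $\sigma_k(v)=0$ by density. The only cosmetic difference is that the paper computes $\dfrac{d^2}{dt^2}Q(at+x)^{1/k}$ directly along the segment between two points of $\Gamma_k$, whereas you phrase the same computation as negativity of the Hessian quadratic form $v^T\nabla^2 Q^{1/k}(x)\,v$ at a fixed $x\in\Gamma_k$; these are identical calculations.
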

\begin{proof}For any $x,y\in \Gamma_k$ and $0\leq t\leq 1$, we need to prove that
$Q(yt+(1-t)x)^{1/k}$ is a concave function. Let $a=y-x$, and then we have $$Q(yt+(1-t)x)^{1/k}=Q(at+x)^{1/k}.$$ If $\sigma_k(a)\neq 0$, by Lemma \ref{prop2}, we have
$$Q(at+x)=\sigma_k(a)\prod_{m=1}^k(t+\lambda_m(x,a))=\sigma_k(a)\sigma_k(t\theta+\lambda(x,a)),$$ where $\lambda(x,\theta)=(\lambda_1(x,\theta),\cdots,\lambda_k(x,\theta))$. Then, by Newton-Maclaurin inequality, we get
\begin{eqnarray}
&&\frac{d^2}{dt^2}Q(at+x)^{1/k}\nonumber\\
&=&\frac{\sigma^2_k(a)}{n}Q(at+x)^{\frac{1}{k}-2}(2\sigma_k(t\theta+\lambda)\sigma_{k-2}(t\theta+\lambda)-\frac{k-1}{k}\sigma_{k-1}^2(t\theta+\lambda))\nonumber\\
&\leq& 0\nonumber.
\end{eqnarray}
For $\sigma_k(a)=0$, we can have some sequence $a_l\rightarrow a$ and $\sigma_k(a_l)\neq 0$. Then, the previous results tells us that
$$\frac{d^2}{dt^2}Q(a_lt+x)^{1/k}\leq 0.$$ Taking $l\rightarrow \infty$, we obtain our result.
\end{proof}
Now, we study the quotient concavity of  these polynomials.
For  $$Q(t\theta+x)=\prod_{m=1}^N(1+b_m\frac{d}{dt})\sigma_k(t\theta+x),$$ we denote that
\begin{eqnarray}\label{QN}
Q^{N'}_{k-1}(t\theta+x)=\prod_{m=1}^{N'}(1+b_m\frac{d}{dt})\sigma_{k-1}(t\theta+x),
\end{eqnarray}
which is a order $k-1$ polynomial.
\begin{lemm}\label{lem20}
For  $x\in \Gamma_k$,  functions $$\frac{Q(x)}{Q^N_{k-1}(x)}, \text{ and } \frac{Q(x)}{Q^{N-1}_{k-1}(x)}$$ are concave functions.
\end{lemm}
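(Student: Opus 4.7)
The plan is to reduce both concavity claims to a single master statement about a polynomial and its $\theta$-directional derivative. Let $D$ denote the $\theta$-directional derivative, which on polynomials acts by $D\sigma_s=(n-s+1)\sigma_{s-1}$; write $Q^M_{k'}:=\prod_{m=1}^M(1+b_m D)\sigma_{k'}$ so that $Q=Q^N_k$. Since the operators $(1+b_m D)$ commute pairwise and with $D$, one has $DQ^M_{k'}=(n-k'+1)Q^M_{k'-1}$ and
\[
Q^M_k=Q^{M-1}_k+b_M(n-k+1)Q^{M-1}_{k-1}.
\]

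Applying the second relation at $M=N$ and dividing by $Q^{N-1}_{k-1}$ yields
\[
\frac{Q}{Q^{N-1}_{k-1}}=\frac{Q^{N-1}_k}{Q^{N-1}_{k-1}}+b_N(n-k+1),
\]
so the second claim is equivalent to concavity of $Q^{N-1}_k/Q^{N-1}_{k-1}$. Combined with the identity $Q^M_{k-1}=\frac{1}{n-k+1}DQ^M_k$, both claims reduce to showing that $Q^M_k/DQ^M_k$ is concave on $\Gamma_k$ for $M\in\{N-1,N\}$.

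For this core statement, apply Proposition \ref{prop2} to $Q^M_k$ to factor along the $\theta$-direction:
\[
Q^M_k(x+t\theta)=\prod_{i=1}^k(t+\lambda_i(x)),\qquad \lambda_i(x)\in\mathbb{R}.
\]
Differentiating in $t$ gives $DQ^M_k(x+t\theta)=Q^M_k(x+t\theta)\sum_i(t+\lambda_i(x))^{-1}$, so
\[
\frac{Q^M_k}{DQ^M_k}(x+t\theta)=\Bigl(\sum_{i=1}^k\frac{1}{t+\lambda_i(x)}\Bigr)^{-1}
\]
is, up to the constant factor $1/k$, the harmonic mean of $\{t+\lambda_i(x)\}$. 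The Cauchy--Schwarz inequality $\bigl(\sum(t+\lambda_i)^{-2}\bigr)^2\le\bigl(\sum(t+\lambda_i)^{-1}\bigr)\bigl(\sum(t+\lambda_i)^{-3}\bigr)$ then gives concavity in $t$ along the $\theta$-direction, since it is equivalent to $gg''\ge 2(g')^2$ for $g(t)=\sum(t+\lambda_i)^{-1}$ and hence to $(1/g)''\le 0$.

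The main obstacle is extending this $\theta$-direction concavity to arbitrary directions $a\in\mathbb{R}^n$, since $\partial_a Q^M_k\ne DQ^M_k$ in general. The strategy would be to restrict $Q^M_k$ to the 2-plane through $x$ spanned by $\theta$ and $a$, obtaining a bivariate polynomial that is hyperbolic in the $\theta$-direction by Proposition \ref{prop2}, and then adapt the harmonic-mean/Cauchy--Schwarz argument in this 2-dimensional setting. The degenerate case $\sigma_k(a)\sigma_{k-1}(a)=0$ is handled by a continuity argument as in the proof of Lemma \ref{lemma}.
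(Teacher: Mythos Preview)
Your reduction of the second claim to the first via the identity $Q=Q^{N-1}_k+b_N(n-k+1)Q^{N-1}_{k-1}$ is correct and matches the paper exactly. Your $\theta$-direction computation is also fine, provided all $\lambda_i(x)>0$ (this does hold for $x\in\Gamma_k$, but you have not verified it).

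The gap is precisely where you say it is: you have only established concavity along lines parallel to $\theta$, and your ``strategy'' for arbitrary directions $a$ is not a proof. Your diagnosis of the obstacle is also slightly off: the issue is not that $\partial_a Q^M_k\ne DQ^M_k$ --- the denominator is fixed as $(n-k+1)Q^M_{k-1}$ regardless of direction, and the task is to show the quotient is concave along every line $x+ta$.

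The paper carries out what you only gesture at. It factors in the $a$-direction via Proposition~\ref{prop2}, writing $Q(at+x)=\sigma_k(a)\prod_m(t+\lambda_m(x,a))$, and then uses the identity $Q^N_{k-1}(at+x)=\tfrac{1}{n-k+1}\tfrac{d}{ds}Q(s\theta+at+x)\big|_{s=0}$ to express the denominator, arriving at
\[
\frac{Q(at+x)}{Q^N_{k-1}(at+x)}=\frac{n-k+1}{\displaystyle\sum_l\frac{d\lambda_l/ds\,(x,a)}{t+\lambda_l(x,a)}}.
\]
The functions $g_l(t)=(t+\lambda_l(x,a))\big/\tfrac{d\lambda_l}{ds}(x,a)$ are affine in $t$, but not obviously positive: when $\sigma_k(a)$ has negative eigenvalues $\mu_l$, some $\lambda_l(x,a)$ and some $d\lambda_l/ds$ are negative. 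The bulk of the paper's proof is a sign analysis --- tracking the ordered eigenvalues $\mu_m$ of $\sigma_k(a)$, the monotonicity of $\lambda_m(a\tau+at+x)$ in $\tau$, and the identification $\lambda_l(\theta,a,0,\dots,0)=1/\mu_l$ --- to conclude that $\mu_l(t+\lambda_l(x,a))>0$ and $\mu_l\,d\lambda_l/ds>0$, hence $g_l(t)>0$ for all $l$ and $t\in[0,1]$. Only then does the Cauchy--Schwarz/harmonic-mean step apply. This sign analysis is the missing substance of your proof, and your 2-plane sketch does not supply it.
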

\begin{proof}
For $x,y\in \Gamma_k$, let $ty+(1-t)x=at+x$, where $a=y-x$. If $\sigma_k(a)\neq 0$, by Proposition \ref{prop2}, we have $$Q(s\theta+at+x)=\sigma_k(a)\prod_{m=1}^k(t+\lambda_m(s\theta+x,a)).$$ Since, it is clear that, by \eqref{QN}, we have
\begin{eqnarray}
\frac{Q(s\theta+at+x)}{Q^N_{k-1}(s\theta+at+x)}=(n-k+1)\frac{Q(s\theta+at+x)}{\dfrac{d}{ds}Q(s\theta+at+x)}\nonumber.
\end{eqnarray}
Hence, taking $s=0$, we get
\begin{eqnarray}
\frac{Q(at+x)}{Q^N_{k-1}(at+x)}=\frac{(n-k+1)\sigma_k(a)\prod_{m=1}^k(t+\lambda_m(x,a))}{\sigma_k(a)\sum_{l=1}^k\dfrac{d \lambda_l}{ds}(x,a)\prod_{m\neq l}(t+\lambda_m(x,a))}=\frac{n-k+1}{\sum_{l=1}^k\dfrac{d \lambda_l}{ds}(x,a)\dfrac{1}{t+\lambda_l(x,a)}}\nonumber.
\end{eqnarray}
By the same argument as Lemma 2.10 in \cite{HL}, we also can prove that $\lambda_m(s\theta+x,a)$ is real analytic for variable $s$. Then
$$\frac{d\lambda_l}{ds}(x,a)=\left.\frac{d}{ds}(\lambda_l(s\theta+x,a))\right |_{s=0} $$ is well defined.

We assume the ordered eigenvalues of $\sigma_k(a)$ are $\mu_1\leq \mu_2\leq \cdots\leq \mu_{K_0}< 0 < \mu_{K_0+1}\leq \cdots\leq \mu_k$.
For any fixed $0\leq t\leq 1$ and $\tau\in \mathbb{R}$, we have
\begin{eqnarray}
Q(s\theta+a\tau+at+x)&=&C_n^k\prod_{m=1}^k(s+\lambda_m(a\tau+at+x))\nonumber\\
&=&C_n^k\prod_{m=1}^k(s+\tau\lambda_m(a+\frac{at+x}{\tau}, \frac{\bar\alpha_1}{\tau}, \frac{\bar\alpha_2}{\tau^2},\cdots,\frac{\bar\alpha_k}{\tau^k}))\nonumber.
\end{eqnarray}
Assume that, for $\tau\geq0$,
\begin{eqnarray}\label{4}
\lambda_1(a\tau+at+x)\leq \lambda_2(a\tau+at+x)\leq\cdots\leq\lambda_k(a\tau+at+x)
\end{eqnarray}
are sign ordered eigenvalues.
Hence, we have, for any $\tau>0$,
$$\lambda_1(a+\frac{at+x}{\tau},\frac{\bar\alpha_1}{\tau},\cdots,\frac{\bar\alpha_k}{\tau^k})\leq \cdots\leq \lambda_k(a+\frac{at+x}{\tau},\frac{\bar\alpha_1}{\tau},\cdots,\frac{\bar\alpha_k}{\tau^k}).$$ If $\tau$ approaches to $+\infty$, we have
$$\lambda_1(a,0,0, \cdots,0)\leq \lambda_2(a,0,0,\cdots,0)\leq\cdots\leq \lambda_k(a,0,0\cdots,0).$$ That is the all $k$ ordered eigenvalues of $\sigma_k(a)$, which is the ordered eigenvalues.  Hence, we obtain that $\lambda_m(a,0,0,\cdots,0)$ $=\mu_m$.

By the continuity of the function $\lambda_m(a\tau+at+x)$, for any given $s_0$ and the equation $\la_m(a\tau+at+x)=s_0$ respect to variable $\tau$, we always have a unique solution. The argument is similar to Proposition \ref{prop2}.  Hence, we know that,
function $\lambda_m(a\tau+at+x)$ is a monotone function  for variable $\tau$. On the other hand, since $at+x\in \Gamma_k$ and $\alpha_m>0,$
it is clear that
$$\left.\frac{d^{l} Q(s\theta+at+x)}{ds^{l}}\right|_{s=0}>0,$$ for $0\leq l\leq k-1$, which implies
$$\sigma_l(\lambda(at+x))>0\ \ \text{ for } 0\leq l\leq k .$$ Here $\lambda(at+x)=(\lambda_1(at+x),\cdots\lambda_k(at+x))$. Thus, we obtain
$\lambda(at+x)\in \Gamma_k$, namely, $\lambda_m(at+x)>0$ for $1\leq m\leq k$. For $\mu_m>(<)0$, $\lambda_m(a\tau+at+x)$ is a monotone increasing(decreasing) function for $\tau$. Hence, the root of this function $\tau_m$ is negative (positive) since  $\lambda_m(at+x)>0$, which implies $\mu_m\lambda_m(at+x,a)>0$. Here $-\lambda_m(at+x,a)$ is the root of the polynomial $Q(a\tau+at+x)$, namely
$$Q(a\tau+at+x)=\sigma_k(a)\prod_{m=1}^k(\tau+\lambda_m(at+x,a)),$$ and $\tau_m+\la_m(at+x,a)=0$. Note that  $\la_m(at+x,a)$s are note sign ordered eigenvalues. Its index just comes from $\tau_m$.

Since, we also have
$$Q(a\tau+at+x)=\sigma_k(a)\prod_{m=1}^k(\tau+t+\lambda_m(x,a)),$$ then, combing the previous two equalities, we have
$$\lambda_m(at+x,a)=t+\lambda_m(x,a).$$ Thus, we get
$$\mu_m(t+\lambda_m(x,a))>0.$$

The discussion of $\tau_m$ gives us the order of $\tau_m$ which is $$\tau_{K_0+1}\leq\cdots\leq\tau_{k-1}\leq  \tau_k<0<\tau_1\leq \tau_2\leq\cdots\leq\tau_{K_0}.$$ Thus, we have the order of $\la_m(at+x,a)$,
\begin{eqnarray}
&\la_{K_0}(at+x,a)\leq\cdots \leq\la_2(at+x,a)\leq \la_1(at+x,a)\nonumber\\
&<0<\la_k(at+x,a)\leq \la_{k-1}(at+x,a)\leq\cdots\leq \la_{K_0+1}(at+x,a),\nonumber
\end{eqnarray}
which implies that
$$\la_{K_0}(x,a)\leq\cdots \leq\la_2(x,a)\leq \la_1(x,a)<0<\la_k(x,a)\leq \la_{k-1}(x,a)\leq\cdots\leq \la_{K_0+1}(x,a).$$
Since for $s\geq 0$, $s\theta+x\in \Gamma_k$, we also have
\begin{eqnarray}
&\la_{K_0}(s\theta+x,a)\leq\cdots \leq\la_2(s\theta+x,a)\leq \la_1(s\theta+x,a)\nonumber\\
&<0<\la_k(s\theta+x,a)\leq \la_{k-1}(s\theta+x,a)\leq\cdots\leq \la_{K_0+1}(s\theta+x,a).\nonumber
\end{eqnarray}
Hence, we have
\begin{eqnarray}
&\la_{K_0}(\theta,a,0,0,\cdots,0)\leq\cdots \leq \la_1(\theta,a,0,0\cdots,0)\nonumber\\
&<0<\la_k(\theta,a,0,0\cdots,0)\leq \cdots\leq \la_{K_0+1}(\theta,a,0,0,\cdots,0).\nonumber
\end{eqnarray}
Since $\lambda_m(s\theta+x,a)$ is also a monotone function, then the sign of the functions $$\frac{d \lambda_l}{ds}(x,a), \text{ and } \lambda_l(\theta, a,0,0,\cdots,0)$$ are same for any $l$.  Here, $\lambda_l(\theta,a,0,0,\cdots,0)$ are the roots of the polynomial
$$\sigma_k(at+\theta)=0.$$ The roots of the above polynomial in fact are
$$\frac{1}{\mu_{K_0}}\leq \frac{1}{\mu_{K_0-1}}\leq \cdots\leq \frac{1}{\mu_{1}}< 0 < \frac{1}{\mu_k}\leq\frac{1}{\mu_{k-1}}\leq\cdots\leq \frac{1}{\mu_{K_0+1}}.$$  Thus, we have
$$\la_l(\theta,a,0,0,\cdots,0)=\frac{1}{\mu_l},$$ which implies
$$\mu_l\frac{d \lambda_l}{ds}(x,a)>0.$$

Let's prove the concavity. Define some function $$g_l(t)=\frac{\mu_l(t+\lambda_l(x,a))}{\mu_l\frac{d \lambda_l}{ds}(x,a)}>0,$$ and $g_l''(t)=0$.
Then, we have
\begin{eqnarray}
\frac{1}{n-k+1}\frac{Q(at+x)}{Q^N_{k-1}(at+x)}=\frac{1}{\sum_m\dfrac{1}{g_m(t)}}.
\end{eqnarray}
The second order derivative of the above function is that
 $$-\frac{2}{(\sum_m\dfrac{1}{g_m(t)})^3}(\sum_m\dfrac{1}{g_m(t)}\sum_m\frac{(g_m'(t))^2}{g^3_m(t)}-(\sum_m\frac{g'_m(t)}{g^2_m(t)})^2)\leq 0.$$
For $\sigma_k(a)=0$, we take some $a_l$ with $\sigma_k(a_l)\neq 0$ and converges to $a$. Hence, the first function defined in our Lemma is a concave function. For the second function, we can rewrite it to be
$$\frac{Q(x)}{Q^{N-1}_{k-1}(x)}=\frac{Q^{N-1}_{k}(x)}{Q^{N-1}_{k-1}(x)}+(n-k+1)b_N.$$ Hence, it is also a concave function.
\end{proof}
A direct corollary of the above Lemma is the following result.
\begin{coro}\label{coro21} In $\Gamma_k$, the functions
$$(\frac{Q(x)}{Q^N_{l}(x)})^{1/(k-l)}  \text{ and }   (\frac{Q(x)}{Q^{N-l}_{k-l}(x)})^{1/l}$$ are concave functions. Especially, if $N=k$, functions $$(\frac{P_k(x)}{\sigma_1(x)+n\sum_mb_m})^{1/(k-1)} \text{ and } (\frac{P_k(x)}{\sigma_1(x)+nb_1})^{1/(k-1)}$$ are concave functions.
\end{coro}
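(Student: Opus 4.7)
My plan is to derive this corollary from Lemma \ref{lem20} by a telescoping argument combined with the classical fact that the geometric mean of positive concave functions is again concave. The first step would be to observe that Lemma \ref{lem20} is not really specific to the top polynomial $Q = Q^N_k$: both Lemma \ref{lem} and Proposition \ref{prop2} go through by the same inductive Rolle-type argument for every polynomial of the form $\prod_{m=1}^{N'}(1+b_m \tfrac{d}{dt}) \sigma_j(t\theta + x)$. In particular, for every pair $(N', j)$ with $N' \leq N$ and $j \leq k$, the two quotients
$$\frac{Q^{N'}_j(x)}{Q^{N'}_{j-1}(x)} \qquad \text{and} \qquad \frac{Q^{N'}_j(x)}{Q^{N'-1}_{j-1}(x)}$$
are concave on $\Gamma_k \subseteq \Gamma_j$, and each $Q^{N'}_j$ is positive on $\Gamma_k$ because the eigenvalues $\lambda(x)$ constructed in the proof of Lemma \ref{lem20} lie in $\Gamma_k$.

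For the first function in the corollary I would use the telescoping factorization
$$\frac{Q(x)}{Q^N_l(x)} = \prod_{j=l+1}^{k} \frac{Q^N_j(x)}{Q^N_{j-1}(x)},$$
which expresses it as a product of $k-l$ positive concave factors of the first type above. Raising to the $\tfrac{1}{k-l}$ power produces the geometric mean, which is concave by the same elementary argument invoked at the end of Corollary \ref{coro15}. For the second function I would use the analogous telescoping
$$\frac{Q(x)}{Q^{N-l}_{k-l}(x)} = \prod_{j=0}^{l-1} \frac{Q^{N-j}_{k-j}(x)}{Q^{N-j-1}_{k-j-1}(x)},$$
whose $l$ factors are now of the second type in Lemma \ref{lem20} (generalized), hence concave; taking the $\tfrac{1}{l}$-th power again yields the geometric mean, which is concave.

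The ``especially'' statement would then follow by a direct computation of the denominators. Since $\sigma_1(t\theta+x)=nt+\sigma_1(x)$ is linear in $t$, each operator $(1+b_m \tfrac{d}{dt})$ only shifts the constant term by $nb_m$, so taking $N=k$ and $l=1$ in the first function gives $Q^k_1(x)=\sigma_1(x)+n\sum_m b_m$, while taking $l=k-1$ in the second gives $Q^1_1(x)=\sigma_1(x)+nb_1$, as required.

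The only place where some care is needed is the generalization of Lemma \ref{lem20} from the top pair to all intermediate pairs $(Q^{N'}_j, Q^{N'}_{j-1})$ and $(Q^{N'}_j, Q^{N'-1}_{j-1})$. This should be essentially automatic: the hyperbolicity property ``$Q^{N'}_j$ restricted to any affine line has only real roots'' is preserved under deleting differential operator factors and under differentiating $\sigma_k$, and the monotonicity/sign-pattern arguments for the roots $\lambda_m$ used in Lemma \ref{lem20} never used anything specific to the top degree. Consequently I do not anticipate a serious analytic obstacle; the proof is essentially bookkeeping of two telescopings plus the geometric-mean preservation of concavity.
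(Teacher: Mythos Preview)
Your proposal is correct and follows essentially the same route as the paper: telescope the quotient into successive factors of the form $Q^{N'}_{j}/Q^{N'}_{j-1}$ (resp.\ $Q^{N'}_{j}/Q^{N'-1}_{j-1}$), invoke the generalized Lemma~\ref{lem20} for each factor, and then apply the geometric-mean concavity argument from Corollary~\ref{coro15}; the special case $N=k$ is handled by the same direct computation of $Q^k_1$ and $Q^1_1$. In fact your write-up is more explicit than the paper's, which only spells out the first telescoping and leaves the second implicit.
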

\begin{proof}
Obviously, we have
$$\frac{Q(x)}{Q^N_{l}(x)}=\frac{Q(x)}{Q^N_{k-1}(x)}\frac{Q^N_{k-1}(x)}{Q^{N}_{k-2}(x)}\cdots\frac{Q^N_{l+1}(x)}{Q^N_{l}(x)}.$$ Hence, by Lemma \ref{lem20} and the proof of Corollary \ref{coro15}, we have our first result. For $N=k$, since, we have
$$Q_1^k(x)=\sigma_1(x)+n\sum_mb_m, Q_1^1=\sigma_1(x)+nb_1,$$ we have our second one using the concavity of the first two functions.
\end{proof}

We can use the previous result to revisit the quotient concavity of sum type equations.
\begin{coro} The function
$$q_k(x)=\frac{\sigma_{k+1}(x)+\al\sigma_k(x)}{\sigma_k(x)+\al\sigma_{k-1}(x)}$$ is a concave function in $\Gamma_{k+1}$ for $k\geq 1$.
\end{coro}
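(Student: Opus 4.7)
My plan is to reduce the sum-type quotient to a classical Newton quotient by enlarging the ambient dimension. The key observation is the combinatorial identity
\begin{equation*}
\sigma_m(x_1,\ldots,x_n,\alpha)=\sigma_m(x)+\alpha\,\sigma_{m-1}(x),\qquad 1\le m\le n+1,
\end{equation*}
obtained by splitting the $m$-element subsets of $\{1,\ldots,n+1\}$ according to whether they contain the appended coordinate. Setting $\tilde x:=(x_1,\ldots,x_n,\alpha)\in\mathbb{R}^{n+1}$, this identity rewrites the numerator and denominator of $q_k$ simultaneously, giving
\begin{equation*}
q_k(x)=\frac{\sigma_{k+1}(x)+\alpha\sigma_k(x)}{\sigma_k(x)+\alpha\sigma_{k-1}(x)}=\frac{\sigma_{k+1}(\tilde x)}{\sigma_k(\tilde x)}.
\end{equation*}

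Next I verify the cone inclusion $x\in\Gamma_{k+1}(\mathbb{R}^n)\Rightarrow\tilde x\in\Gamma_{k+1}(\mathbb{R}^{n+1})$. For $x\in\Gamma_{k+1}(\mathbb{R}^n)$ and any $1\le m\le k+1$, both $\sigma_m(x)$ and $\sigma_{m-1}(x)$ are positive; together with $\alpha\ge 0$, the identity above forces $\sigma_m(\tilde x)>0$, so indeed $\tilde x\in\Gamma_{k+1}(\mathbb{R}^{n+1})$.

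It remains to invoke the concavity of $\sigma_{k+1}/\sigma_k$ on $\Gamma_{k+1}(\mathbb{R}^{n+1})$, which is the content of Corollary \ref{coro21} applied to $Q=\sigma_{k+1}$ over $\mathbb{R}^{n+1}$: Condition (C) is trivially satisfied with $b=0$, giving $Q_k^N=\sigma_k$, and the first quotient in the corollary with $l=k$ produces exactly $\sigma_{k+1}/\sigma_k$ as a concave function. Since the embedding $x\mapsto\tilde x$ is affine, composition preserves concavity, and $q_k$ is concave on $\Gamma_{k+1}(\mathbb{R}^n)$. No serious technical obstacle arises; the only step requiring insight is spotting the dimension-raising identity that converts the sum-type quotient into an ordinary Newton quotient.
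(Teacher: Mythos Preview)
Your proof is correct and takes a genuinely different, more elementary route than the paper. The paper decomposes
\[
q_k(x)=\left.\frac{(1+b\,\tfrac{d}{dt})\,\sigma_{k+1}(t\theta+x)}{(1+b\,\tfrac{d}{dt})\,\sigma_{k}(t\theta+x)}\right|_{t=0}+\frac{\alpha}{n-k+1}\,\frac{\sigma_k(x)}{\sigma_k(x)+\alpha\sigma_{k-1}(x)},\qquad b=\frac{\alpha}{n-k+1},
\]
and handles the two summands separately, invoking the hyperbolic-polynomial machinery of Section~4 (Corollary~\ref{coro21}) for the first and Lemma~\ref{lem13} for the second. Your dimension-raising identity $\sigma_m(x,\alpha)=\sigma_m(x)+\alpha\sigma_{m-1}(x)$ collapses the whole problem to the classical concavity of $\sigma_{k+1}/\sigma_k$ in $\mathbb{R}^{n+1}$, pulled back along the affine embedding $x\mapsto(x,\alpha)$; no decomposition, no Lemma~\ref{lem13}, and only the trivial $b=0$ instance of Section~4 (or simply the well-known Newton-quotient concavity) is needed. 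In fact your trick does more: since $\sigma_{k+1}/\sigma_k$ is concave on all of $\Gamma_k$, not merely $\Gamma_{k+1}$, and since $x\in\Gamma_k$ with $\alpha\ge0$ forces $(x,\alpha)\in\Gamma_k(\mathbb{R}^{n+1})$, the same argument reproves Lemma~\ref{lem14} without the inductive computation of Section~3. The paper's route is valuable chiefly as a consistency check showing that the Section~4 framework recovers the sum-type quotient concavity already obtained by other means.
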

\begin{proof}Since, we have
$$\sigma_{k}(t\theta+x)=\frac{1}{n-k}\frac{d}{dt}\sigma_{k+1}(t\theta+x),\text{ and } \sigma_{k-1}(t\theta+x)=\frac{1}{n-k+1}\frac{d}{dt}\sigma_{k}(t\theta+x),$$ then, we get
\begin{eqnarray}
&&q_k(x)\nonumber\\
&=&\left.\frac{\sigma_{k+1}(t\theta+x)+\dfrac{\al}{n-k+1}\dfrac{d}{dt}\sigma_{k+1}(t\theta+x)}{\sigma_k(t\theta+x)+\dfrac{\al}{n-k+1}\dfrac{d}{dt}\sigma_{k}(t\theta+x)}\right|_{t=0}+\frac{\al}{n-k+1}\frac{\sigma_k(x)}{\sigma_k(x)+\al\sigma_{k-1}(x)}\nonumber.
\end{eqnarray}
Both of the above two functions are concave functions by Corollary \ref{coro21} and Lemma \ref{lem13}.  Hence, we have the concavity of $q_k(x)$ in $\Gamma_{k+1}$ cone.
\end{proof}

\begin{rema} The major difference between previous Corollary and Lemma \ref{lem14} is the definition field of the function $q_k$. Since, it is clear that $\Gamma_{k+1}\subset \tilde{\Gamma}_k$, Lemma \ref{lem14} is better.
\end{rema}

By Corollary \ref{coro21}, we can conclude the main result of this section by  Theorem \ref{theos4}.

\section{The conclusion }
Combing discussion of section 2 and section 4, we have our main result of this paper, Theorem \ref{main}. With appropriate barrier, we can prove the  existence result Theorem \ref{k-exist}   coming from the Theorem \ref{main}.  \\

\noindent {\bf Proof of Theorem \ref{k-exist}}:
The proof can be deduced by the degree theory as in \cite{gg, GRW}. We only give a brief review following \cite{GRW}. Consider the modified  auxiliary equation
\begin{eqnarray}\label{5.4}
\\
Q(\kappa(X))=\psi^t(X,\nu)=\big(tf^{-\frac1k}(X,\nu)+(1-t)(C^k_n[\frac{1}{|X|^k}+
\varepsilon(\frac{1}{|X|^k}-1)])^{-\frac1k}\big)^{-k}.\nonumber
\end{eqnarray}
The assumptions of $\psi^t$ satisfies the structural condition in the Constant Rank Theorem (Theorem 1.2 in \cite{GLM1}) which implies our convexity of solutions. Theorem \ref{main} gives curvature estimates. If we have $C^0$ bound, the proof of the rest part of this theorem is same as \cite{GRW}. The $C^0$ upper bound comes from our barrier condition with maximum principal. Same as \cite{GRW}, the lower bound only needs the uniform lower bound of the convex body's volume which we need to discuss here.  By our equation and uniform upper bound, there is some $m\leq k$ such that $\alpha_m\neq 0$ and
$$\sigma_m(\kappa)\leq C,$$ where $C$ is some constant only depending on $\psi$ and uniform upper bound. Thus, using Alexsandrov-Frenchel inequality and same argument in \cite{GRW}, we have the lower bound of the volume. \\

A corresponding $C^2$ estimates of convex solutions for  Hessian equations  defined in some domain also holds:
\begin{coro}
Suppose function $u$ defined in some domain $\Omega\subset \mathbb{R}^{n}$ is a
convex solution of the linear combination of $k$ Hessian equation
\begin{eqnarray}\label{EQ1}
Q(D^2 u)=\sum_{m=0}^k\alpha_m\sigma_m(D^2 u)=\psi(x,u,Du)
\end{eqnarray}
for some positive function $\psi(x,u,Du)\in
C^{2}(\mathbb{R}^n\times \mathbb{R}\times \mathbb{R}^n)$ and nonnegative coefficients $\alpha_0,\cdots,\alpha_k$ satisfying Condition (C), then there is a constant $C$ depending only on $n, k$, $\|u\|_{C^1}$,
$\inf \psi $ and $\|\psi\|_{C^2}$, such that
 \begin{equation}\label{5.6}
 \max_{\Omega} D^2 u \le C.\end{equation}
\end{coro}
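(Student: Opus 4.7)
The strategy is to mimic the proof of Theorem \ref{theo2} in the flat Euclidean setting, where the second fundamental form $h_{ij}$ is replaced by the Hessian $u_{ij}$. By Theorem \ref{theos4}, Condition (C) on the coefficients $\alpha_0,\dots,\alpha_k$ implies Condition (Q) in the Garding cone $\Gamma_k$, so Lemma \ref{lGuan} is available. Since $u$ is convex, the eigenvalues $\lambda_1\ge\lambda_2\ge\cdots\ge\lambda_n\ge 0$ of $D^2 u$ lie in $\Gamma_n\subset\Gamma_k$, and the equation \eqref{EQ1} is elliptic at $u$.

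Following section 2, I would set
\[
\varphi=\log P_m+\phi\!\left(\tfrac{|Du|^2}{2}\right)+A|x|^2,\qquad P_m=\sum_j\lambda_j^m,
\]
for $m$ sufficiently large and constants $A$, $\phi$ to be chosen (the last two terms replace the role of $-mZ\log u$ in section 2, where $u$ was the support function). If one wishes an interior estimate on $\Omega'\subset\subset\Omega$, multiply by a standard cutoff $\eta(x)$; this is routine and I omit it here. At an interior maximum $x_0$ of $\varphi$, rotate so that $u_{ij}(x_0)$ is diagonal with $u_{ii}=\lambda_i$. First and second derivatives of $\varphi=0$ and $\le 0$ give analogs of \eqref{0.2} and \eqref{0.3}; these are \emph{simpler} than the hypersurface case because there are no Gauss/Weingarten terms contributing extra $\kappa_i^2$ factors — the ambient space is flat, and $u_{ijk}$ is fully symmetric (the analog of Codazzi).

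Differentiate \eqref{EQ1} once and twice and plug in the critical point identities, obtaining the analogs of \eqref{e2.14}--\eqref{e2.15}. Because $u_{ijk}$ enjoys exactly the same symmetry as $h_{ijk}$, the decomposition of the third-order terms into the quantities $A_i,B_i,C_i,D_i,E_i$ of section 2 carries through verbatim. Lemma \ref{lemma8} (the case $i\ne 1$) uses only the symmetry and the Newton-type inequality $\lambda_j Q^{jj,ii}+Q^{jj}\ge Q^{ii}$, which holds unchanged. Lemma \ref{lemma2} (the case $i=1$) uses Condition (Q) from Lemma \ref{lGuan} together with the bookkeeping on $S_\mu^{aa}$, $S_\mu^{aa,bb}$ in terms of the ordered $\lambda_j$; this too is unchanged. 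Corollary \ref{cor4} and the case split at the end of section 2 then apply: for some index $r$ with $\lambda_r/\lambda_1\ge\delta_r$ and $\lambda_{r+1}/\lambda_1\le\delta_{r+1}$, either all coefficients $\alpha_1,\dots,\alpha_r$ vanish — in which case the third-order inequality $A_1+B_1+C_1+D_1-E_1\ge 0$ from Lemma \ref{lemma2} closes the estimate — or some $\alpha_s\neq 0$ with $s\le r$, which via $\alpha_s\sigma_s(\lambda)\le \psi$ directly bounds $\lambda_1$. Combining this with the positive contribution $AQ^{ii}$ from the $A|x|^2$ term (which dominates the $-C\kappa_1$ lower-order terms for $A$ large) produces the desired bound $\lambda_1(x_0)\le C$.

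The main obstacle, should one insist on maximum technical precision, is only the handling of the cutoff and the translation of the support-function term $-mZ\log u$ to the domain setting, where one must choose $A$ and $\phi$ so that the positive second-derivative contributions $AQ^{ii}+\phi'Q^{ii}\lambda_i^2$ absorb the linear-in-$\kappa_1$ error from $d_X\psi,d_\nu\psi$ and from the cutoff derivatives. This is a standard adjustment that imposes no new structural difficulty; everything dictated by Condition (C) has already been extracted through Theorem \ref{theos4}, and the combinatorial core of the proof is identical to that of Theorem \ref{theo2}.
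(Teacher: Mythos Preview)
The paper states this corollary without proof, treating it as an immediate transplant of the argument for Theorem \ref{theo2} to the domain setting; your proposal is exactly that transplant, with the expected replacement of the support-function term $-mZ\log u$ by the gradient/quadratic penalty $\phi(|Du|^2/2)+A|x|^2$. The structural core --- Condition (C) $\Rightarrow$ Condition (Q) via Theorem \ref{theos4}, the $P_m$ test function, the decomposition $A_i,B_i,C_i,D_i,E_i$, and Lemmas \ref{lemma8}--\ref{lemma2} together with the dichotomy at the end of section 2 --- carries over verbatim because $u_{ijk}$ is fully symmetric, so your outline is correct and matches what the paper intends.
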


In section 3, the admissible solution sets of the sum type equations have been obtained. Thus, we can state some existence results for admissible setting for sum type equations \eqref{1.g1}. We need pose some frequently used barrier \cite{BK, TW, CNS5}. We denote $\rho(X)=|X|$. Assume that

\noindent {\it Condition} (1). There are two positive constant $r_1<1<r_2$ such that
\begin{equation}\label{4.1}
\left\{
\begin{matrix}
\psi(X,\dfrac{X}{|X|}) &\geq&  \dfrac{Q_S^k(1,\cdots, 1)}{r^k_1},\ \ \text{ for } |X|=r_1,\\
\psi(X,\dfrac{X}{|X|}) &\leq&  \dfrac{Q_S^k(1,\cdots, 1)}{r_2^k}, \ \ \text{  for  } |X|=r_2 .
\end{matrix}\right.
\end{equation}

\noindent {\it Condition} (2). For any fixed unit vector $\nu$,
\begin{eqnarray}\label{4.2}
\frac{\p }{\p \rho}(\rho^k\psi(X,\nu))\leq 0,\ \   \text{ where } |X|=\rho.
\end{eqnarray}

For sum type equations, if the right hand side function $\psi$ only depends on position vector $X$, we have the admissible solution results.
\begin{theo}\label{exist} Suppose  that positive function $\psi\in C^2(\bar B_{r_2}\setminus B_{r_1})$ only depends position vector and satisfies conditions (\ref{4.1}) and (\ref{4.2}), then equation (\ref{1.g1}) has a unique $C^{3,\alpha}$ starshaped $\tilde{\Gamma}_k$ solution $M$ in $\{r_1\le |X|\le r_2\}$.
\end{theo}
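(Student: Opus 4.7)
The strategy is the continuity method, parametrizing the starshaped hypersurface by its radial function $\rho:\mathbb{S}^n\to\mathbb{R}^+$ via $X=\rho(z)z$. Writing the principal curvatures of $M$ in terms of $\rho$ and its covariant derivatives on $\mathbb{S}^n$, equation \eqref{1.g1} becomes a second order fully nonlinear PDE for $\rho$ on $\mathbb{S}^n$, elliptic on the set of $\rho$ whose associated curvatures lie in $\tilde\Gamma_k$ (by Theorem \ref{theo5}). I would deform $\psi$ to a radially symmetric target for which a spherical solution is known (an appropriate sphere between $r_1$ and $r_2$), setting $\psi^t=t\psi+(1-t)\psi_0$ with $\psi_0$ chosen so that each $\psi^t$ still satisfies conditions \eqref{4.1} and \eqref{4.2}. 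Openness of the solvability set follows from invertibility of the linearization (ellipticity plus Fredholm alternative on $\mathbb{S}^n$), so the bulk of the work is closedness, i.e.\ uniform a priori estimates.

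Next I would establish the estimates in order. The $C^0$ bound $r_1\le\rho\le r_2$ follows directly from condition \eqref{4.1}: at an interior maximum of $\rho$ the outer normal is radial, and \eqref{4.1} forces $\rho(X)\le r_2$, with the symmetric lower bound at an interior minimum. For the $C^1$ estimate, since $M$ is contained in a bounded annulus, a standard computation expressing the normal in terms of $\rho$ and $\nabla\rho$ combined with the starshaped hypothesis gives control of $|\nabla\rho|$; condition \eqref{4.2} enters through the test function $\log|\nabla\rho|+g(\rho)$, as in \cite{CNS5, BK, TW}. The $C^2$ estimate is the heart of the argument: I would use a test function of the form $\log\kappa_{\max}+\eta(\rho)$, expand using Lemma \ref{lemm D}, and exploit the quotient concavity from Theorem \ref{theosum} (equivalently Lemma \ref{lem14} and Corollary \ref{coro15}) to handle the third-order derivative terms exactly in the manner of Lemmas \ref{lemma8}--\ref{lemma2}. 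Since we are in $\tilde\Gamma_k\supset\Gamma_k$ rather than the convex cone, one does not need the full delicate case analysis of Corollary \ref{cor4}; the single-case argument of \cite{CW} suffices once one has the sum-type quotient concavity. Condition \eqref{4.2} plays the role of forcing the gradient-dependent commutator terms to have the right sign.

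With $C^2$ in hand, the concavity of $(Q_S^k)^{1/k}$ on $\tilde\Gamma_k$ established in Corollary \ref{coro15} permits the Evans--Krylov theorem to produce a $C^{2,\alpha}$ estimate, and then Schauder bootstrap gives $C^{3,\alpha}$. This closes the continuity method and yields the existence of a $C^{3,\alpha}$ starshaped $\tilde\Gamma_k$ solution. For uniqueness, suppose $M_1,M_2$ are two solutions with radial functions $\rho_1,\rho_2$, and consider a maximum point of $\rho_1-\rho_2$. At such a point $\nabla(\rho_1-\rho_2)=0$ and the Hessian is nonpositive, so the curvature matrices satisfy an ordering relation; combining the monotonicity from condition \eqref{4.2} (which implies $\psi$ is suitably decreasing along rays once rescaled) with the ellipticity of $Q_S^k$ on $\tilde\Gamma_k$ yields a contradiction in the classical way.

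The principal obstacle is the $C^2$ estimate: unlike Theorem \ref{theo2}, we cannot assume convexity, only that the curvature vector lies in $\tilde\Gamma_k$. The saving grace is that Theorem \ref{theosum} already provides Condition (Q) throughout $\tilde\Gamma_k$, which is exactly the ingredient needed to adapt the $\sigma_k$-Hessian curvature estimate machinery (e.g.\ Chou--Wang) to the sum-type operator. The dependence of $\psi$ only on $X$ is essential here, since it eliminates the troublesome $d_\nu\psi$ terms that otherwise force additional structural hypotheses.
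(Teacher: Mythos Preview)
Your proposal is correct and follows the same approach as the paper, though the paper's own proof is far more terse: it simply cites Theorem~\ref{theo5} (ellipticity of $Q_S^k$ on $\tilde\Gamma_k$), Corollary~\ref{coro15} (concavity of $(Q_S^k)^{1/k}$ on $\tilde\Gamma_k$), and then defers to the standard machinery for concave fully nonlinear equations as in \cite{GJ}. One small point: for the $C^2$ estimate you need not invoke Lemmas~\ref{lemma8}--\ref{lemma2} or the quotient concavity at all. Those devices were built in Section~2 specifically to handle the $d_\nu\psi$ terms arising from normal dependence; since here $\psi=\psi(X)$, the plain concavity of $(Q_S^k)^{1/k}$ on $\tilde\Gamma_k$ from Corollary~\ref{coro15} already suffices for the classical Caffarelli--Nirenberg--Spruck style curvature estimate, which is precisely why the paper can dispatch this theorem in one line.
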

\begin{proof}
The proof comes from Theorem \ref{theo5}, Corollary \ref{coro15} and standard argument for concave equations, seeing for example \cite{GJ} for more detail.
\end{proof}

For three special cases $k=1,2,3$ and $\al>0$, we have the following result.

\begin{theo}\label{exist2} Suppose $k=1,2,3$ and suppose positive function $\psi\in C^2(\bar B_{r_2}\setminus B_{r_1}\times \mathbb S^n)$ satisfies conditions (\ref{4.1}) and (\ref{4.2}), then equation (\ref{1.g1}) has a unique $C^{3,\alpha}$ starshaped $\tilde{\Gamma}_k$ solution $M$ in $\{r_1\le |X|\le r_2\}$.
\end{theo}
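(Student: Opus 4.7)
The plan is to invoke the continuity method (or equivalently degree theory, as in the proof of Theorem \ref{k-exist}) and reduce the theorem to a priori estimates up to $C^{2,\alpha}$ for admissible starshaped solutions of \eqref{1.g1}. Represent $M$ as a radial graph $X=\rho(x)x$ over $\mathbb S^n$. The $C^0$ bound $r_1\leq \rho\leq r_2$ is forced by condition \eqref{4.1}: the spheres $|X|=r_1,\ r_2$ serve as strict sub- and super-solutions of \eqref{1.g1} in the admissible cone $\tilde\Gamma_k$, and the comparison principle (valid since $Q_S^k$ is elliptic in $\tilde\Gamma_k$ by Theorem \ref{theo5}) pins the solution inside the annulus. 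The $C^1$ estimate, i.e.\ a positive lower bound on the support function $u=\langle X,\nu\rangle$, is obtained in the classical way using the starshaped structure together with condition \eqref{4.2}: differentiating the quantity $\rho^k\psi(X,\nu)$ along the radial direction and combining with the equation controls $|\nabla \rho|/\rho$ on $\mathbb S^n$.

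The main obstacle, and the reason for the restriction $k\in\{1,2,3\}$, is the $C^2$ (curvature) estimate for $\tilde\Gamma_k$ solutions rather than merely convex ones, since Theorem \ref{theo2} only supplies the convex case. For $k=1$, $Q_S^1=\al+\sigma_1$ is quasilinear and the $C^2$ bound is classical. For $k=2$, $Q_S^2=\al\sigma_1+\sigma_2$ on the admissible set $\tilde\Gamma_2\subset\Gamma_1\cap\{Q_S^2>0\}$ falls within the range of the Guan--Ren--Wang curvature estimate for $2$-convex hypersurfaces (extended by Spruck--Xiao); the linear term $\al\sigma_1$ with $\al>0$ only \emph{helps} because it contributes a positive lower bound to the trace of $(Q_S^2)^{ii}$, dominating the difficult third-order terms in the standard test function $\log\kappa_{\max}-Z\log u$. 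For $k=3$, admissibility in $\tilde\Gamma_3$ together with the quotient concavity of $q_2$ and $q_1$ from Lemma \ref{lem14} lets one carry out the same $P_m$-test-function argument as in Section~2 but in the larger cone $\tilde\Gamma_3$; the case analysis in Corollary \ref{cor4} (splitting by whether $\kappa_2/\kappa_1$ is small) combined with Lemma \ref{lem13} closes the third-order terms provided $\al>0$, since $\al\sigma_{k-1}$ uniformly bounds $Q_S^k$ from below by $\al\kappa_1\cdots\kappa_{k-1}$ when $\kappa_1$ is large. Beyond $k=3$ this case analysis no longer terminates, which is why the theorem is stated only for $k\leq 3$.

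Once the curvature estimate is in hand, concavity of $(Q_S^k)^{1/k}$ on $\tilde\Gamma_k$ (Corollary \ref{coro15}) lets Evans--Krylov upgrade the $C^2$ bound to a $C^{2,\alpha}$ bound, and Schauder theory applied after differentiating \eqref{1.g1} once gives $C^{3,\alpha}$. The continuity method is then run along a path $\psi^t=t\psi+(1-t)\psi^0$, where $\psi^0$ is chosen so that a round sphere of radius between $r_1$ and $r_2$ solves the equation and so that conditions \eqref{4.1}--\eqref{4.2} persist along the deformation; openness uses the linearization of $Q_S^k$, which is elliptic and invertible on admissible graphs, while closedness uses the uniform a priori estimates. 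Uniqueness follows from the strict ellipticity and the maximum principle applied to the difference of two admissible graphical solutions, exactly as in the concave-operator argument cited in the proof of Theorem \ref{exist}.
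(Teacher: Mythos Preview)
Your outline of the $C^0$, $C^1$, Evans--Krylov, and continuity-method steps matches the paper, and your treatment of $k=1$ is fine. The divergence is in the curvature estimate for $k=2,3$, and there the paper takes a shorter and different route than you do.

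The paper's argument hinges on a single structural inequality: in the admissible cone $\tilde\Gamma_k$ one has
\[
\sum_i (Q_S^k)^{ii} \;=\; (n-k+1)\sigma_{k-1} + \alpha(n-k+2)\sigma_{k-2} \;\geq\; C\,\sigma_1,
\]
which holds exactly when $k=2$ (trivially) or $k=3$ (because $\alpha>0$ forces the $\alpha\sigma_1$ term to dominate). With this trace lower bound in hand, the paper does \emph{not} run the $P_m$ test function of Section~2 at all; instead it rewrites the equation via Corollary~\ref{coro15} as a concave operator and invokes the Guan--Jiao estimate \cite{GJ}, which needs only concavity plus $\sum_i F^{ii}\geq C\sigma_1$. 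The restriction $k\leq 3$ is thus precisely the range in which the displayed trace inequality survives, not a matter of the Section~2 case analysis ``terminating.''

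Your proposed route for $k=3$---adapting the $P_m$ argument of Section~2 inside $\tilde\Gamma_3$---has a genuine gap. That machinery (Lemmas~\ref{lemma8}, \ref{lemma2} and Corollary~\ref{cor4}) is written for \emph{convex} hypersurfaces: the inequalities $\kappa_j Q^{jj,ii}+Q^{jj}\geq Q^{ii}$, the positivity of $P_m=\sum\kappa_j^m$, and several sign arguments all use $\kappa_i\geq 0$. In $\tilde\Gamma_3\subset\Gamma_2$ some principal curvatures may be negative, so those steps do not go through as stated. Likewise, for $k=2$ you cite the GRW/Spruck--Xiao $2$-convex estimate, but admissible solutions live in $\tilde\Gamma_2\supsetneq\Gamma_2$, so that result does not apply directly; what actually saves you is again the trace bound, which you mention but do not exploit in the way the paper does. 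Replace your $k=2,3$ argument by ``trace inequality $+$ concavity $+$ Guan--Jiao'' and the proof aligns with the paper.
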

\begin{proof}
$k=1$ is the linear case which is well known. For $k=2,3$, in its admissible set, we have $$\sum_i(Q_S^k)^{ii}\geq C \sigma_1,$$ where
$C$ is some uniform constant. The curvature estimates for $2$-Hessian equations case at first obtained in \cite{GRW}. We also can generalize these idea to present cases. But we also can adopt the calculation used by Guan-Jiao \cite{GJ} and the previous formula to obtain curvature estimates for any concave functions. On the other hand, Corollary \ref{coro15} tell us that we can rewrite our equation to be some concave function. Thus we have our result.
\end{proof}

At last, inspired by \cite{RW}, the global $C^2$ estimates for $Q_S^{n}$
Hessian equation in $\tilde{\Gamma}_{n}$ also can be solved. That is the following existence theorem.
\begin{theo}
Suppose $k=n$ and suppose positive function $\psi\in C^2(\bar B_{r_2}\setminus B_{r_1}\times \mathbb S^n)$ satisfies conditions (\ref{4.1}) and (\ref{4.2}), then equation (\ref{1.g1}) has a unique $C^{3,\alpha}$ starshaped $\tilde{\Gamma}_k$ solution $M$ in $\{r_1\le |X|\le r_2\}$.
\end{theo}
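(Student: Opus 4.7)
\medskip
\noindent\textbf{Proof plan.} The strategy is the continuity/degree argument used already for the previous existence results (Theorem \ref{k-exist} and Theorem \ref{exist2}) and in \cite{gg, GRW, CNS5}. Deform $\psi$ along a path $\psi^{t}$ connecting a radial datum at $t=0$, for which a round sphere solves \eqref{1.g1}, to the given $\psi$ at $t=1$. Since $\tilde{\Gamma}_{n}$ is a convex cone of ellipticity by Theorem \ref{theo5}, openness of the solvable set follows from the implicit function theorem, and closedness reduces to $t$-uniform $C^{2,\alpha}$ bounds. The barrier conditions (\ref{4.1}) and (\ref{4.2}) give $C^{0}$ control of the radial function $\rho(X)=|X|$ by the maximum principle as in \cite{BK, TW, CNS5}, and starshapedness then promotes this to a $C^{1}$ bound on the graph function. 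Once a global curvature bound $\max_{i}\kappa_{i}\le C$ is established, the concavity of $(Q_{S}^{n})^{1/n}$ on $\tilde{\Gamma}_{n}$ given by Corollary \ref{coro15} permits an Evans--Krylov estimate, yielding $C^{2,\alpha}$, and a Schauder bootstrap then delivers $C^{3,\alpha}$; uniqueness follows from the comparison principle applied to the concave equation $(Q_{S}^{n})^{1/n}=\psi^{1/n}$.

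The only genuinely new step is thus the global curvature estimate for starshaped $\tilde{\Gamma}_{n}$-admissible solutions of $\alpha\sigma_{n-1}(\kappa)+\sigma_{n}(\kappa)=\psi$. I would use the test function $\varphi=\log P_{m}-mZ\log u$ with $P_{m}=\sum_{j}\kappa_{j}^{m}$ from Section 2, following the third-order splitting scheme that Ren--Wang \cite{RW} introduced for pure $\sigma_{n-1}$ equations. At a maximum point with $\kappa_{1}\gg\kappa_{2}\ge\cdots\ge\kappa_{n}$, differentiating the equation twice and using Lemma \ref{lemm D} reduces matters once more to the pointwise inequality
\[
A_{1}+B_{1}+C_{1}+D_{1}-E_{1}\ge 0
\]
with $A_{i},\ldots,E_{i}$ as defined in Section 2. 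The $\alpha\sigma_{n-1}$ contribution is handled by the splitting of \cite{RW}, which is tailored precisely to the $n{-}1$-convex regime built into $\tilde{\Gamma}_{n}\subset\Gamma_{n-1}$. The extra $\sigma_{n}$ contribution is of the good sign on $\tilde{\Gamma}_{n}$: its diagonal second derivatives $\sigma_{n}^{ii}=\sigma_{n-1}(\kappa|i)$ are nonnegative, positive in at least one direction, and the quotient concavity provided by Lemma \ref{lem14} (or equivalently by Corollary \ref{coro21}) supplies the concavity term needed to absorb the remaining mixed third-order contributions.

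The main obstacle is the technical adaptation of the Ren--Wang splitting to the full admissible cone of the combined operator $\alpha\sigma_{n-1}+\sigma_{n}$: one must verify that the positivity gained from the $\sigma_{n-1}$ part together with the $\sigma_{n}$-coming concavity still dominates after all terms weighted by $\alpha$ are added, and in particular that the analogue of Lemma \ref{lemma2} survives on $\tilde{\Gamma}_{n}$ rather than the convex cone. Once this pointwise inequality is in hand, the sum-type analogue of Corollary \ref{cor4} follows, giving $\kappa_{1}\le C$; the degree-theoretic continuity argument of Theorem \ref{k-exist} then concludes the existence of the desired $C^{3,\alpha}$ starshaped $\tilde{\Gamma}_{n}$ solution in $\{r_{1}\le|X|\le r_{2}\}$.
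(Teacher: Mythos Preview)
The paper does not prove this theorem. After stating it, the authors write only: ``The detail proof of this theorem will be carried out in a forthcoming paper by Ren \cite{CR}.'' So there is no argument in the paper to compare against; the sole hint is the phrase ``inspired by \cite{RW}''.

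Your outline is consistent with that hint and with the machinery of Sections~2--3: the continuity scheme, the barriers (\ref{4.1})--(\ref{4.2}) for $C^{0}$, the test function $\log P_{m}-mZ\log u$, and the appeal to the Ren--Wang \cite{RW} splitting for the $\sigma_{n-1}$ piece are all the expected moves. You also correctly flag the genuine obstacle, namely adapting the third-order inequality $A_{1}+B_{1}+C_{1}+D_{1}-E_{1}\ge 0$ to the combined operator on the admissible cone $\tilde{\Gamma}_{n}$ rather than the convex cone. That is exactly the step the authors deemed heavy enough to postpone to a separate paper, and your proposal does not carry it out---so what you have written is a plan, not a proof.

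One technical point deserves care: your claim that the $\sigma_{n}$ contribution ``is of the good sign on $\tilde{\Gamma}_{n}$'' because $\sigma_{n}^{ii}=\sigma_{n-1}(\kappa|i)\ge 0$ is not correct in general. On $\tilde{\Gamma}_{n}\subset\Gamma_{n-1}$ the smallest eigenvalue $\kappa_{n}$ may be negative, and then $\sigma_{n-1}(\kappa|i)=\prod_{j\ne i}\kappa_{j}$ is negative for $i\ne n$. What \emph{is} guaranteed by Theorem~\ref{theo5} is that the full symbol $(Q_{S}^{n})^{ii}=\sigma_{n-1}(\kappa|i)+\alpha\sigma_{n-2}(\kappa|i)$ is positive. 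Any adaptation of \cite{RW} must therefore treat the two pieces together, not separately; this is presumably part of why the authors deferred the argument.
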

The detail proof of this theorem will be carried out in a forthcoming paper by Ren \cite{CR}.   \\

\noindent {\it Acknowledgement:} The authors wish to thank  Professor Pengfei Guan to raise this problem and for his contribution with Professor C.-S. Lin about the admissible set. They also would like to thank Xiangwen Zhang for his interesting in their work and sharing unpublished note \cite{GZ}.
The work was started when the second and third author were visiting Shanghai Centre for Mathematical Sciences in 2014. They would like to  thank their various support and hospitality.  Part of the work was done while the first and last author were visiting McGill University. They also would  like to thank McGill University for their hospitality
and the support of CSC program during 2014-2015.

\end{document}